\definecolor{DeepBlue}{rgb}{0,0,.7}
\newcommand{\HH}{\mathbb{H}}
\newcommand{\rk}{\mathrm{rk}}
\newcommand{\R}{\mathbb{R}}
\newcommand{\Ob}{\mathcal{O}}
\newcommand{\C}{\mathbb{C}}
\newcommand{\Z}{\mathbb{Z}}
\newcommand{\dt}{\frac{\partial}{\partial t}}
\newcommand{\id}{\mathrm{id}}
\newcommand{\bs}{\backslash}
\newcommand{\D}{\Delta}
\newcommand{\al}{\alpha}
\newcommand{\Id}{\text{\textnormal{Id}}}
\newcommand{\End}{\text{End}}
\newcommand{\tr}{\text{\textnormal{tr}}\thinspace}
\newcommand{\Tr}{\text{\textnormal{Tr}}\thinspace}
\newcommand{\Trs}{\text{\textnormal{Tr}}_s\thinspace}
\newcommand{\spec}{\text{\textnormal{spec}}\thinspace}
\newcommand{\vol}{\text{\textnormal{vol}}}
\newcommand{\Cl}{\text{\textnormal{Cl}}}
\newcommand{\ad}{\text{\textnormal{ad}}}
\newcommand{\supp}{\text{\textnormal{supp}}}
\newcommand{\Ad}{\text{\textnormal{Ad}}}
\newcommand{\RRe}{\text{\textnormal{Re}}}
\newcommand{\IIm}{\text{\textnormal{Im}}}
\newcommand{\T}{\mathbb{T}}
\newcommand{\N}{\mathbb{N}}
\newlength{\dtildeheight}
\newtheorem{Th}{Theorem}[section]
\newtheorem{Lemma}[Th]{Lemma}
\newtheorem{Proposition}[Th]{Proposition}
\theoremstyle{definition}
\newtheorem{remark}{Remark}[section]
\newtheorem{definition}{Definition}[section]
\newtheorem{exmp}{Example}[section]
\begin{document}

\title{The twisted Selberg trace formula and the Selberg zeta function for compact orbifolds}
\author{Ksenia Fedosova}
\address{Max Planck Institute for Mathematics\\ Vivatgasse 7\\ 53111 Bonn \\ Germany}

\email{fedosova@math.uni-bonn.de}

\begin{abstract}
We study elements of the spectral theory of compact hyperbolic orbifolds $\Gamma \bs \HH^{n}$. We establish a version of the Selberg trace formula for non-unitary representations of $\Gamma$ and prove that the associated Selberg zeta function admits a meromorphic continuation to $\C$.
\end{abstract}

\maketitle

\setcounter{tocdepth}{1}

\section{Introduction}
\label{SecInt}

The Selberg zeta function is an important tool in the study of the spectral theory of  locally symmetric Riemannian spaces. This zeta function is defined by an infinite product over the closed geodesics that only converges in a complex half-space, so for its investigation it is useful to understand if it admits a meromorphic continuation.
The purpose of this paper is to prove the existence of a meromorphic continuation
of the Selberg zeta functions on compact orbifolds by establishing a suitable Selberg trace formula.

\begin{Th}
	\label{THEOREM1}
	Suppose $\Ob = \Gamma \bs \HH^{2n+1}$ is an compact odd-dimensional hyperbolic 
	 orbifold, $\chi$ is a (possibly) non-unitary representation of $\Gamma$, and $\sigma$
	is a unitary representation of $SO(2n)$. Then the Selberg zeta function $Z(s, \sigma, \chi)$ (see Definition \ref{thermos})
	admits a meromorphic continuation to $\C$.
\end{Th}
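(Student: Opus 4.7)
The plan is to reduce the meromorphic continuation of $Z(s,\sigma,\chi)$ to the spectral theory of a twisted Laplace-type operator on $\Ob$ via a Selberg trace formula tailored to the non-unitary twist $\chi$ and to the orbifold singularities of $\Ob$. First I would use the defining Euler product over primitive closed geodesics of $\Ob$, with local factors formed from the traces of $\sigma$ and $\chi$ on the appropriate holonomy data, and observe that a polynomial bound on $\|\chi(\gamma)\|$ in terms of the displacement length $\ell(\gamma)$ gives absolute convergence in some right half-plane $\RRe s > c_0$. Taking $\tfrac{d}{ds}\log Z$ then produces a Dirichlet-type series indexed by all closed geodesics, which is exactly the hyperbolic part of the geometric side of a Selberg trace formula evaluated against a distinguished family of test functions $h_s$ on $G = \mathrm{SO}_0(2n+1,1)$.

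The next step is to establish the corresponding Selberg trace formula for non-unitary twists on the compact orbifold. Concretely, I would study the convolution operator $\pi_\Gamma(h_s)$ acting on the space of twisted $L^2$-sections built from $\sigma$ and the finite-dimensional $V_\chi$ over $\Gamma \bs G / K$. Since $\Ob$ is compact and $\dim V_\chi < \infty$, the relevant trace-class properties are preserved even though $\chi$ is non-unitary. The geometric side then decomposes into an identity contribution (a Plancherel-type integral against the Harish-Chandra $c$-function of $G$), the hyperbolic sum identified in the previous step, and a finite sum of elliptic orbital integrals indexed by $\Gamma$-conjugacy classes of torsion elements, each weighted by the trace of $\chi$ on the given torsion element and a local orbital integral depending on $\sigma$ and $h_s$. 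The spectral side is a sum over the generalized eigenvalues of a twisted Bochner Laplacian acting on sections of the associated vector bundle over $\Ob$; by compactness this spectrum is discrete.

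Meromorphic continuation then follows term by term. The spectral side is automatically meromorphic on $\C$ since its poles come from the discrete spectrum of the twisted Laplacian. The identity contribution extends meromorphically because, in odd dimension $2n+1$, the Plancherel density is a polynomial in the spectral parameter, so its pairing with $h_s$ is manifestly meromorphic. Each elliptic orbital integral, by the centralizer structure of the torsion element inside $G$, reduces to a Plancherel-type integral on a reductive subgroup of lower rank and inherits meromorphic continuation by the same mechanism. Rearranging the trace formula presents $\tfrac{d}{ds}\log Z(s,\sigma,\chi)$ as a difference of meromorphic functions on $\C$; integration in $s$ and exponentiation then yield the claimed meromorphic continuation of $Z(s,\sigma,\chi)$.

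The main obstacle is the non-unitarity of $\chi$ at the trace-formula step. Without self-adjointness of the twisted Laplacian, standard spectral estimates are not directly available, and one must control the growth of resolvents along vertical lines by other means, for instance by writing $\chi$ as a finite-dimensional deformation of a unitary representation. Likewise, the elliptic orbital integrals twisted by a non-unitary $\chi$ do not have a direct representation-theoretic interpretation and must be handled by a hands-on harmonic analytic treatment for each conjugacy class of torsion elements. I expect the technical heart of the paper to lie in establishing the trace formula in this non-unitary, orbifold setting and in the bookkeeping required to show that the elliptic contributions, together with the identity and hyperbolic terms, combine into something meromorphic on all of $\C$.
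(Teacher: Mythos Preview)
Your outline follows the paper's strategy at the coarse level --- establish a trace formula valid for non-unitary $\chi$ on the compact orbifold, separate identity, hyperbolic and elliptic contributions, and recognize the hyperbolic part as the logarithmic derivative of $Z$. However, there is a genuine gap in the final step. You write that ``integration in $s$ and exponentiation then yield the claimed meromorphic continuation of $Z(s,\sigma,\chi)$.'' This inference is not valid: if $Z'/Z$ is merely meromorphic, then $\exp\bigl(\int Z'/Z\bigr)$ is only a \emph{local} primitive, and the exponential is single-valued (hence $Z$ meromorphic) only if every residue of $Z'/Z$ is an integer. The paper states this explicitly in the introduction and again in Section~\ref{sec:symmsel}: the whole point is to identify the residues with (graded) multiplicities of eigenvalues of a suitable operator, which are manifestly integers. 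Your argument, as it stands, gives no control on the residues coming from the identity and elliptic contributions.

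The paper secures these integers by machinery you have not mentioned. First, rather than a direct family of test functions $h_s$, it uses the heat kernel $e^{-tA(\sigma)}$ of a graded operator $A(\sigma)$ built from $\sigma$ via branching to $K$, and then applies a resolvent regularization (Lemma~\ref{sorgensorgen}) so that a finite linear combination $\sum_j c_j R(s_j^2)$ becomes trace class; this is what produces a clean residue calculation. Second, the elliptic orbital integrals are computed precisely (Theorem~\ref{saturn}) as $\int_{\R} P_\sigma^\gamma(i\nu)\,\Theta_{\sigma,\nu}(h)\,d\nu$ with $P_\sigma^\gamma$ an \emph{even} polynomial; this evenness is exactly what allows the identity and elliptic terms to be absorbed into an entire correction factor $\Xi$ with no extra residues. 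Third, the case $\sigma\neq w_0\sigma$ requires splitting $Z$ into symmetric and antisymmetric parts and handling the latter with a twisted Dirac operator $D$ and the trace of $De^{-tD^2}$; here one needs the additional Weyl-invariance $P_\sigma^\gamma = P_{w_0\sigma}^\gamma$ (Lemma~\ref{feuerundwasser}) so that the elliptic contribution actually vanishes on the antisymmetric side. Your remark that elliptic orbital integrals ``reduce to a Plancherel-type integral on a reductive subgroup of lower rank'' is too vague to deliver these specific structural facts, and without them the integer-residue conclusion --- hence the meromorphic continuation of $Z$ itself --- does not follow.
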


To prove Theorem \ref{THEOREM1} it is sufficient to show that the residues of $Z'(s,\sigma, \chi)/Z(s,\sigma, \chi)$
are integers. This was proven by \cite{BO} in the case when $\Ob$ is a compact hyperbolic manifold and $\chi$ is  unitary. 
Later on the theorem was extended to non-compact finite volume hyperbolic manifolds with cusps in the case when  $\chi$ is unitary \cite{Par} and  when $\chi$ is a restriction of a representation of $SO_0(2n+1,1)$ \cite{Pf}. Using a slightly different approach, the theorem was proved in \cite{Tsu} for  compact orbifolds when $\chi$ and $\sigma$
are trivial representations.
Notably, the theorem does not necessarily hold for non-compact finite volume hyperbolic orbifolds: an example is  the Bianchi orbifold of discriminant~$-3$ with $\chi$ and $\sigma$ trivial  \cite{Fri}.

The approach of \cite{BO,Pf,Par} invokes  applying the Selberg trace formula to a certain test function which makes  $Z'(s,\sigma, \chi)/Z(s,\sigma, \chi)$  appear as one of the terms in the geometric side of the formula. In order to adopt their approach we need to prove a more general version of the Selberg trace formula.

\begin{Th}\label{T2}
	Let $G$ be a connected real semisimple Lie group with finite center of non-compact type,
	$K$ a maximal compact subgroup of $G$, and $\Gamma \subset G$ a discrete subgroup such that
	$\Ob := \Gamma \bs G / K$ is compact. Let $\chi$ be a non-unitary representation of $\Gamma$, and $\sigma$ be 
	a unitary representation of $K$. For the non-selfadjoint Laplacian $\Delta_{\chi, \sigma}^\#$ defined in Subsection \ref{Sec8}  and $\phi$ belongs to the space $\mathcal{P}(\C)$ defined in Section \ref{sec:!fa},
	$$ \sum_{\lambda \in \spec(\Delta_{\chi, \sigma}^\#)} m(\lambda) \phi(\lambda^{1/2}) = \sum_{\{\gamma\} \subset \Gamma} \vol(\Gamma_\gamma \bs G_\gamma) I_\phi(\gamma).$$
	Above $m(\lambda)$ is the multiplicity of $\lambda$; $\{\gamma\}$ denotes the conjugacy class of $\gamma$; $G_\gamma$ and $\Gamma_\gamma$ are the centralizers of $\gamma$ in $G$ and $\Gamma$, respectively. Finally, $I_\phi(\gamma)$ are the so-called orbital integrals, defined by
	$$ I_\phi(\gamma) := \int_{G_\gamma \bs G} \tr h_\phi(g \gamma g^{-1}) d \dot{g},$$
	where $h_\phi$ is the integral kernel of the self-adjoint Laplacian $\widetilde{\Delta}_\sigma$ defined in 
	Section 5.
\end{Th}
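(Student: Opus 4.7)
The plan is to imitate the classical proof of the Selberg trace formula, adapted to accommodate the non-unitary twist $\chi$ and the orbifold singularities of $\Ob = \Gamma \bs G / K$. The scheme has three layers: realise $\phi((\Delta^\#_{\chi,\sigma})^{1/2})$ as an integral operator on sections of the twisted bundle, express its kernel as a $\Gamma$-average of the upstairs kernel $h_\phi$, and compute the trace in two ways.

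I would first identify sections of the twisted bundle on $\Ob$ with $(V_\sigma \otimes V_\chi)$-valued functions $f$ on $G$ satisfying $f(\gamma g) = (\id_{V_\sigma} \otimes \chi(\gamma)) f(g)$ and $f(gk) = (\sigma(k)^{-1} \otimes \id_{V_\chi}) f(g)$ for $\gamma \in \Gamma$, $k \in K$. Via a holomorphic functional calculus around the spectrum of $\Delta^\#_{\chi,\sigma}$, valid because elements of $\p(\C)$ extend holomorphically with sufficient decay, the operator $\phi((\Delta^\#_{\chi,\sigma})^{1/2})$ becomes an integral operator whose kernel is the period
$$K_\phi(x,y) \;=\; \sum_{\gamma \in \Gamma} h_\phi(x^{-1}\gamma y) \otimes \chi(\gamma).$$
The first analytic point is absolute convergence of this series in a norm strong enough to produce a trace-class operator; this combines the rapid decay of $h_\phi$ built into the definition of $\p(\C)$ with the at-most-exponential bound on $\|\chi(\gamma)\|$ (finite-dimensional $V_\chi$) and the known exponential growth of the lattice-point count for the cocompact $\Gamma$.

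On the spectral side one identifies the trace of $\phi((\Delta^\#_{\chi,\sigma})^{1/2})$ with $\sum_\lambda m(\lambda)\phi(\lambda^{1/2})$, where $m(\lambda)$ is the dimension of the generalised eigenspace, since $\Delta^\#_{\chi,\sigma}$ need not be selfadjoint. On the geometric side one integrates $\tr K_\phi(x,x)$ over $\Ob$, groups terms by conjugacy classes in $\Gamma$, and applies the standard unfolding
$$\int_{\Gamma \bs G/K} \sum_{\gamma \in \Gamma} \tr\bigl[\chi(\gamma)\, h_\phi(x^{-1}\gamma x)\bigr]\, dx \;=\; \sum_{\{\gamma\}} \tr \chi(\gamma) \int_{\Gamma_\gamma \bs G/K} \tr h_\phi(x^{-1}\gamma x)\, dx,$$
followed by the double-coset factorisation $\Gamma_\gamma \bs G \cong (\Gamma_\gamma \bs G_\gamma) \times (G_\gamma \bs G)$ to separate out $\vol(\Gamma_\gamma \bs G_\gamma)$ and the orbital integral $I_\phi(\gamma)$.

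I expect two principal obstacles. The first is the non-selfadjointness of $\Delta^\#_{\chi,\sigma}$: neither the spectral theorem nor Mercer's theorem is directly available for identifying the operator trace with the spectral sum. I would address this by treating $\Delta^\#_{\chi,\sigma}$ as a relatively compact perturbation of the selfadjoint operator $\widetilde{\Delta}_\sigma \otimes \id_{V_\chi}$ and using resolvent estimates, or alternatively by controlling Schatten norms of the kernel $K_\phi$ directly to establish both trace-class property and trace identity. The second obstacle is genuinely new in the orbifold setting: the elliptic (torsion) conjugacy classes contribute orbital integrals concentrated on fixed loci, and one must verify their finiteness and correct coupling with $\vol(\Gamma_\gamma \bs G_\gamma)$; the latter is finite because cocompactness of $\Gamma$ in $G$ forces $\Gamma_\gamma$ to be a lattice in the reductive group $G_\gamma$.
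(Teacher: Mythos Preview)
Your outline matches the paper's proof closely: functional calculus for the non-selfadjoint $\Delta^\#_{\chi,\sigma}$, identification of the downstairs kernel as a $\Gamma$-average of the upstairs kernel $h_\phi$, Lidskii's theorem for the spectral side, and the standard unfolding into conjugacy classes and orbital integrals. The paper also handles the non-selfadjointness exactly as you propose, writing $\Delta^\#_{\chi,\sigma}$ as a first-order perturbation of a selfadjoint Bochner Laplacian and using that the lift to $G/K$ splits as $\widetilde{\Delta}_\sigma\otimes\Id_{V_\chi}$.

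Two technical points where your plan diverges from the paper are worth flagging. First, your convergence argument for $\sum_{\gamma} h_\phi(x^{-1}\gamma y)\otimes\chi(\gamma)$ via ``rapid decay of $h_\phi$ balanced against exponential growth of $\|\chi(\gamma)\|$ and of the lattice-point count'' is more delicate than needed and, as stated, not obviously sufficient: you would have to quantify all three rates and check the inequality. The paper instead uses that $\phi\in\mathcal{P}(\C)$ is Paley--Wiener, so $\widehat{\phi}$ has compact support; representing $\phi(H^{1/2})$ through the wave equation and invoking finite propagation speed, the kernel $h_\phi$ turns out to be \emph{compactly supported} on $G$. The $\Gamma$-sum is then locally finite and convergence is automatic, regardless of how badly $\chi$ fails to be unitary. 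Second, the wave-equation estimates on the orbifold are obtained by invoking Selberg's Lemma to pass to a torsion-free normal subgroup $\Gamma_0\lhd\Gamma$ of finite index, so that $\Gamma_0\backslash G/K$ is an honest manifold where those estimates are already available; comparing Sobolev norms on the finite cover then yields the orbifold statement. This is the paper's concrete device for accommodating the elliptic elements you identify as the second obstacle, rather than analysing the fixed loci directly.
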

The Selberg trace formula has a rich history starting from the classical work  \cite{Sel}, but has mostly been constrained 
to unitary representations $\chi$ of $\Gamma$.
The non-unitary case was first studied in \cite{Mu} under the assumption that $\Gamma$ contains no elements of finite order, also called elliptic elements, which means $\Ob$ is a compact  manifold. 

To complete the proof of Theorem \ref{THEOREM1} we apply Theorem \ref{T2} 
to the case $G/K = \HH^{2n+1}$. The major remaining problem is to calculate the orbital integrals $I_\phi(\gamma)$ corresponding to elliptic elements $\gamma\in \Gamma$. 
\begin{Lemma}\label{LEMMA1}
	In the above setup the orbital integral $I_\phi(\gamma)$ for elliptic $\gamma \in \Gamma$ equals
	$$ I_\phi(\gamma) = \sum_{\sigma' \in \widehat{SO(2n)}} \int_\R \Theta_{\lambda, \sigma'}(\phi) P_\gamma(i \lambda) d \lambda,$$
	where $\widehat{SO(2n)}$ is the set of equivalence classes of irreducible representations of $SO(2n)$, $\Theta_{\sigma', \lambda}(\phi)$ is the character of the unitarily induced representation $\pi_{\sigma', \lambda}$ of $G$, and $P_\gamma(\lambda)$ is an even polynomial.
\end{Lemma}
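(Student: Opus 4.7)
The plan is to combine the Harish-Chandra Plancherel expansion of the kernel $h_\phi$ with the classical character formula for parabolically induced representations at elliptic semisimple elements. For $G = SO_0(2n+1,1)$ the tempered dual consists only of unitary principal series $\pi_{\sigma',\lambda}$ with $\sigma' \in \widehat{SO(2n)}$ and $\lambda \in \R$, induced from $P = MAN$ with $M = SO(2n)$: no discrete series contribute, because $\text{rk}\, G = n + 1 \neq n = \text{rk}\, K$. Since $\gamma$ is elliptic and $M = SO(2n)$ contains a maximal torus of $K = SO(2n+1)$, we may assume $\gamma \in M$ after conjugation.

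The first step is to expand $h_\phi$ by the Plancherel formula,
\[
h_\phi(g) \;=\; \sum_{\sigma' \in \widehat{SO(2n)}}\int_\R \phi(\lambda)\, K_{\sigma,\sigma',\lambda}(g)\, \mu_{\sigma'}(\lambda)\, d\lambda,
\]
where $K_{\sigma,\sigma',\lambda}$ is the $\sigma$-isotypic kernel of $\pi_{\sigma',\lambda}$ and $\mu_{\sigma'}(\lambda) = |c(\sigma',\lambda)|^{-2}$ is the Plancherel density, which is polynomial in $\lambda$ for odd-dimensional real hyperbolic space. Substituting into $I_\phi(\gamma)$ and interchanging summation with integration (justified by the decay of $\phi \in \mathcal{P}(\C)$), one applies the induced-character formula to the inner integral $\int_{G_\gamma \bs G} \tr K_{\sigma,\sigma',\lambda}(g\gamma g^{-1})\, d\dot g$, which at $\gamma \in M$ produces $\tr \sigma'(\gamma)/|\det_{\mathfrak{n}}(\Id - \Ad(\gamma)^{-1})|$ weighted by the $K$-to-$M$ branching multiplicity of $\sigma$ and $\sigma'$, and is crucially independent of $\lambda$. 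Collecting the $\phi$-dependence into $\Theta_{\sigma',\lambda}(\phi)$ and combining the polynomial Plancherel factor with the $\gamma$-character ratio into $P_\gamma(i\lambda)$ yields the claimed identity.

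The main obstacle is verifying that $P_\gamma$ is indeed an \emph{even} polynomial. Polynomiality rests on the classical fact that for $G = SO_0(2n+1,1)$ the density $|c(\sigma',\lambda)|^{-2}$ is polynomial in $\lambda$---accessible via the Gindikin--Karpelevich formula, and in contrast with even-dimensional hyperbolic space, where a hyperbolic-trigonometric factor appears. Evenness comes from the self-intertwining $\pi_{\sigma',\lambda} \cong \pi_{\sigma',-\lambda}$ implemented by the Knapp--Stein operator, which forces $P_\gamma(-\lambda) = P_\gamma(\lambda)$. The remaining bookkeeping consists of tracking the explicit polynomial factors through the branching and Weyl-denominator contributions.
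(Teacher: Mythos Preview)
Your outline has a genuine gap at the step where you ``apply the induced-character formula to the inner integral.'' The formula you quote,
\[
\frac{\tr \sigma'(\gamma)}{\bigl|\det\nolimits_{\mathfrak n}\bigl(\Id - \Ad(\gamma)^{-1}\bigr)\bigr|},
\]
is the value of the \emph{character distribution} $\Theta_{\sigma',\lambda}$ at a regular point of $MA$; it is not the orbital integral of the Plancherel wave-packet kernel $K_{\sigma,\sigma',\lambda}$ over $G_\gamma\backslash G$. These are different objects, and the second is not obtained from the first by inspection. More concretely, an elliptic element of finite order in $SO(2n)$ typically fixes a nontrivial subspace (so that the block of $1$'s in the normal form of $\gamma$ has size $2k>2$), and then $\det_{\mathfrak n}(\Id-\Ad(\gamma)^{-1})=0$: your candidate expression for the orbital integral is singular precisely on the elements you need. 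A related symptom is the degree count: the Plancherel density $\mu_{\sigma'}(\lambda)$ has degree $2n$ for every $\gamma$, whereas the correct polynomial $P_\gamma$ has degree $2(k-1)$ depending on the centralizer $G_\gamma$; only for $\gamma=e$ do they coincide. So $P_\gamma$ cannot arise as ``Plancherel factor times a $\lambda$-independent constant.''

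What the paper does instead is exactly the mechanism that is missing from your sketch. One first approximates the (non-regular) elliptic $\gamma$ by \emph{regular} elements $\gamma_\varepsilon$, for which the orbital integral is computed via \cite[Theorem~13.1]{Kna} in terms of the characters $\Theta_{\sigma',\lambda}$ (this is Lemma~\ref{neptune}). One then passes to the limit $\gamma_\varepsilon\to\gamma$ using Harish-Chandra's limit formula (Lemma~\ref{uranus}, taken from \cite{SW}), which requires applying the differential operator $\prod_{\alpha\in\Delta^+_\gamma}H_\alpha$ along the roots of the centralizer $G_\gamma$. It is precisely this differentiation that produces the polynomial: the computation in~(\ref{mars}) yields, up to a sign, $\prod_{i'}(\nu^2+k_{i'}^2)$, which is visibly even. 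Your evenness argument via the Knapp--Stein intertwiner is also incomplete as stated, since that operator gives $\pi_{\sigma',\lambda}\cong\pi_{w_0\sigma',-\lambda}$ rather than $\pi_{\sigma',\lambda}\cong\pi_{\sigma',-\lambda}$; without the additional input $P^\gamma_\sigma=P^\gamma_{w_0\sigma}$ (the paper's Lemma~\ref{feuerundwasser}, or the explicit formula) this does not close.
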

Orbital integrals have so far been computed for $G/K = \HH^2$, $\HH^3$ and $\HH^{2n}$
in \cite{Gel}, \cite{Kna} and \cite{SW} respectively. The computation of orbital integrals is not only useful for the proof of Theorem~\ref{THEOREM1}, but also for other applications of the Selberg trace formula. For example, we will use Lemma~\ref{LEMMA1} in the upcoming papers \cite{Fe2, Fe3} to study the behavior of the 
analytic torsion of odd-dimensional compact and non-compact finite-volume orbifolds.

During the proof of Theorem \ref{T2} we obtain a result about the heat trace expansion of generalized Laplacians on orbibundles.
\begin{Lemma}\label{mmma}
	Let $E \to \Ob$ be an orbibundle over a compact Riemannian orbifold $\Ob$, and $\Delta$ the generalized Laplacian acting on sections of $E$. Then there exists an asymptotic expansion as $t \to 0$:
	 \begin{equation}\label{hte}
	 \Tr e^{-t \Delta} = \frac{1}{2 \pi t^{\dim(\Ob)/2}} \sum_{k=0}^\infty a_k t^{k/2},
	 \end{equation}
	 where $a_{k} \in \R$. The constant term in (\ref{hte}) vanishes
	 if the dimensions of $\Ob$ and all its singular strata are odd.
\end{Lemma}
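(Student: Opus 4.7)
The plan is to reduce, via orbifold charts and a $\Gamma$-invariant partition of unity, to an equivariant heat trace on local Euclidean models, and then apply a fixed-point-type localization as $t \to 0^+$. Cover $\Ob$ by finitely many orbifold charts $U_i = \widetilde U_i/\Gamma_i$ with $\widetilde U_i \subset \R^n$ open, $n = \dim \Ob$, $\Gamma_i$ a finite group acting by isometries, and $E$ lifting to a $\Gamma_i$-equivariant Hermitian bundle over $\widetilde U_i$ with a lifted generalized Laplacian $\widetilde \Delta_i$. Choose a $\Gamma_i$-invariant partition of unity $\{\varphi_i\}$ subordinate to this cover. The heat kernel of $\Delta$ pulls back locally as
$$\widetilde K_i(t,x,y) = \sum_{\gamma \in \Gamma_i} \widetilde k_i(t,x,\gamma y)\, \gamma,$$
where $\widetilde k_i$ is the heat kernel of $\widetilde \Delta_i$, so that
$$\Tr e^{-t\Delta} = \sum_i \frac{1}{|\Gamma_i|} \sum_{\gamma \in \Gamma_i} \int_{\widetilde U_i} \varphi_i(x)\, \tr\bigl(\widetilde k_i(t,x,\gamma x)\, \gamma\bigr)\, dx.$$

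For each $\gamma \in \Gamma_i$ the fixed-point set $\widetilde U_i^\gamma$ is a totally geodesic submanifold of even codimension. Standard Gaussian off-diagonal heat kernel estimates show that the integrand concentrates in a $\sqrt{t}$-tubular neighborhood of $\widetilde U_i^\gamma$ as $t \to 0^+$. Expanding the heat kernel along $\widetilde U_i^\gamma$ in the Minakshisundaram--Pleijel style and performing a Gaussian integration in the normal directions yields an asymptotic expansion
$$\int_{\widetilde U_i} \varphi_i(x)\, \tr\bigl(\widetilde k_i(t,x,\gamma x)\, \gamma\bigr)\, dx \;\sim\; (4\pi t)^{-d_{i,\gamma}/2} \sum_{k \geq 0} b_k^{i,\gamma}\, t^k,$$
with $d_{i,\gamma} := \dim \widetilde U_i^\gamma$ and the $b_k^{i,\gamma}$ integrals over $\widetilde U_i^\gamma$ of universal local invariants depending on the metric, the connection on $E$, the endomorphism part of $\widetilde \Delta_i$, and the action of $\gamma$ on the normal bundle.

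Summing over $i$ and $\gamma$, and regrouping contributions by the dimension of the fixed-point set (which coincides with the dimension of the corresponding singular stratum of $\Ob$), yields an asymptotic expansion of $\Tr e^{-t\Delta}$ as a sum of terms of the form $t^{(2k-d)/2}$ with $k \in \Z_{\geq 0}$ and $d$ ranging over the dimensions of $\Ob$ and its singular strata. This is precisely the expansion (\ref{hte}) in half-integer powers of $t$ starting at $t^{-n/2}$, after absorbing the numerical prefactors into the $a_k$. The coefficient of $t^0$ receives contributions only from strata with $d$ even (since one needs $d = 2k$ for some $k \geq 0$); hence if $\Ob$ and every singular stratum are odd-dimensional, no such term is present and the constant term vanishes.

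The main obstacle is the rigorous justification of the fixed-point localization and the asymptotic expansion along $\widetilde U_i^\gamma$, including the off-diagonal decay of $\widetilde k_i(t,x,\gamma x)$ away from the fixed set and the Gaussian integration to all orders in the normal directions. This is essentially classical in the equivariant Riemannian setting (as in Donnelly's treatment of Laplacians on finite-group quotients and Kawasaki's orbifold heat expansions), but requires careful bookkeeping across charts to verify that the local fixed-point contributions assemble into well-defined integrals over each singular stratum of $\Ob$, independent of the choice of partition of unity and of the orbifold atlas.
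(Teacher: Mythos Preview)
Your proposal is correct and is essentially the paper's approach: the paper first treats global quotients by writing the orbifold heat kernel as $\sum_{\gamma}\rho(\gamma)^{-1}K(t,\tilde x,\gamma\tilde x)$ and invoking Gilkey's equivariant expansion along each fixed-point set, then defers to \cite{Dry} for the general orbifold via charts and a partition of unity, which is exactly your chart-by-chart argument. One small slip worth fixing: the fixed-point set of an isometry need not have even codimension (reflections give codimension~$1$), but you never actually use that claim, so the argument is unaffected.
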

Note that $\Ob$ is now not necessarily a global quotient hyperbolic orbifold.
Lemma \ref{mmma} generalizes \cite{Dry}, which established the heat trace expansion 
for the scalar Laplacian on orbifolds. We will use Lemma \ref{mmma} in the upcoming paper \cite{Fe2} to show that the analytic torsion on odd-dimensional orbifolds with odd-dimensional strata is independent of the metric.


\subsection*{Acknowledgement}
The present paper is a part of the author's PhD thesis. She would like to thank her supervisor Werner M\"uller for his constant support, Dmitry Tonkonog and Julie Rowlett for reading the draft and correcting minor mistakes, and Jonathan Pfaff for useful discussions. 

\section{Orbifolds, orbibundles and pseudodifferential operators}\label{sec:orb}

\subsection{Definitions and examples}
We begin with an informal introduction to orbifolds and orbibundles.
\begin{definition}
An orbifold $\Ob$ is a topological space such that for each $p \in \Ob$ there exists a neighbourhood $U_p$, an open contractible set
 $V_p$ on $\R^n$, and a finite group 
$\Gamma_p$ acting on $V_p$ and satisfying $U_p = V_p / \Gamma_p$.
\end{definition} 
\begin{definition}
An orbifold is said to be global quotient if it is the orbit space of a manifold under a global action of a discrete (not necessarily finite) group.
\end{definition}
\begin{definition}
An orbibundle $\pi: E \to \Ob$  is a map with the following properties. 
Let $p \in \Ob$ and $U_p, \Gamma_p, V_p$ be as above. Then there exists a representation 
of $\Gamma_p$ on $\R^k$ such that the restriction of $\pi$ to $\pi^{-1}(U_p)$ is diffeomorphic to 
$\pi'=(V_p \times \R^k)/\Gamma_p\to V_p/\Gamma_p$, where the action of $\Gamma_p$ on $V_p \times \R^k$ is the diagonal one.
\end{definition}
We will not recall the standard cocycle condition which ensures the local data on different neighborhoods can be glued well together.

\begin{exmp}\label{ex1}
Let $U_p = B$ be a unit ball in $\R^2$ with centre $p=0$, $\R^k = \R$, $\Gamma_p = \Z_2 = \{ e,i\}$. 
with the non-trivial element of $\Z_2$ acting by $i(x) = -x$. 
Then $B(0)/\Z_2$ is a cone with vertex $p$.
Let $\rho: \Z_2 \to \End(\R)$ be the representation of $\Z_2$ taking $i$ to $-\Id$. 
Then $\pi=(B(0) \times \R)/\Z_2 \to B(0)/\Z_2$ is an orbibundle. 
Note that $(\pi)^{-1}(x) =\R$, $x\neq p$, and $(\pi)^{-1}(p)=\R_+\cup\{0\}$, so the fibers of $\pi$ are not vector spaces.

\end{exmp}

\begin{exmp}
In Example  \ref{ex1} take the trivial representation $\rho$. Then $(B(0)\times \R)/\Z_2 = B(0)/\Z_2 \times \R$.

\end{exmp}

\subsection{Formal definitions} For the reader's convenience we supply some formal definitions. The full treatment is contained in \cite{Dry} or \cite{Buc}.
\begin{definition}\label{ObDefinition}
\begin{enumerate}
\item An orbifold chart on a topological space $X$ consists of a connected contractible open subset $\widetilde{U}$ of $\R^n$, a finite group $G_U$ 
acting on $\widetilde{U}$ by diffeomorphisms, and a mapping
$\pi_U$ from $\widetilde{U}$ onto an open subset $U$ of $X$ inducing a homeomorphism from the orbit space $G_U \bs \widetilde{U}$ onto $U$.
 We will always assume that the group $G_U$ acts effectively on $\widetilde{U}$.

\item Let $\Ob$ be an orbifold. A point $x$ of $\Ob$ is said to be singular if for some (hence every) orbifold chart $(\widetilde{U}, G_U, \pi_U)$ around $x$, the points in the inverse
image of $x$ in $\widetilde{U}$ have non-trivial isotropy in $G_U$. The isomorphism class of the isotropy group, called the abstract isotropy type of $x$, is independent both of
the choice of point in the inverse image of $x$ in $\widetilde{U}$ and of the choice of chart $(\widetilde{U}, G_U, \pi_U)$ around $x$. Points that are not singular are called regular. 

\end{enumerate}
\end{definition}

\begin{definition}
A Riemannian structure on an orbifold $\Ob$ is an assigment to each 
orbifold chart $(\widetilde{U}, G_U, \pi_U)$ of a $G_U$-invariant Riemannian metric $g_{\widetilde{U}}$ on
$\widetilde{U}$ satisfying a compatibility condition. Every orbifold admits a Riemannian structure.
\end{definition}

\begin{definition}
Let $M$ and $N$ be two orbifolds. A continuous map $f: M \to N$ is smooth if for any $x \in M$ one can find orbifold charts $\mathcal{R} = (\widetilde{U},\Gamma,U,\pi)$ around $x$ 
and  $\mathcal{R}' = (\widetilde{U'},\Gamma',U',\pi_{U'})$ around $f(x)$ and a smooth map $\tilde{f}:\widetilde{U}\to \widetilde{U}'$ such that the following diagram is commutative:
\begin{displaymath}
    \xymatrix{
        \widetilde{U} \ar[r]^{\tilde{f}} \ar[d]_{\pi_U} & \widetilde{U}' \ar[d]^{\pi_{U'}} \\
        U \ar[r]_{f}       & U' }
\end{displaymath}
\end{definition}

For the formal definition of an orbibundle we refer to \cite{Buc}.
\begin{definition}
Let $E \to \Ob$ be an orbibundle. 
A map $f: \Ob \to E$ is a smooth section if $p \circ f = \Id_\Ob$ and if $f$ is smooth as a map between the orbifolds $\Ob$ and $E$.
\end{definition}
\begin{exmp}
Let a group $\Gamma$ act properly discontinously on a manifold $M$. Any representation $\rho: \Gamma \to \End(\R^k)$
 defines an associated orbibundle $M \times \R^k/\Gamma \to M / \Gamma$.
 Then we can identify smooth sections of $M \times \R^k / \Gamma \to M / \Gamma$
with $f \in C^\infty(M, M\times \R^k)$ such that $f(m g) =  \rho(g^{-1}) \cdot f(m)$ for all $g \in \Gamma, m\in M$.
\end{exmp}

\subsection{Pseudodifferential operators on orbibundles}
\label{sectionPDE}
 In this subsection we  explain why the necessary elements of the classical analysis of 
pseudodifferential operators can be applied to orbifolds.

\subsection*{Sobolev spaces}

To define Sobolev norms on an orbifold $\Ob$, first define Sobolev norms locally. Let $\widetilde{U}$ and $\Gamma$ be as in Definition \ref{ObDefinition}. Note that if $\Gamma$ is finite, then $C_0^\infty(\widetilde{U}/\Gamma, (\widetilde{U}\times \R^k)/\Gamma) \cong C_0^\infty(\widetilde{U}, \widetilde{U} \times \R^k)^\Gamma$. 
 Note that $C_0^\infty(\widetilde{U}, \widetilde{U} \times \R^k)$  is equipped with usual Sobolev norm $||\cdot ||_s$, 
and this norm restricts to $\Gamma$-invariant sections. For $f \in C_0^\infty(\widetilde{U}, \widetilde{U} \times \R^k)^\Gamma$,
define $|| f ||_s^\Gamma = \frac{1}{|\Gamma|} || f||_s$.

Next we use an orbifold atlas and a partition of unity to define the Sobolev norm on the space of smooth sections of orbibundle $E \to \Ob$. 
Sobolev norms defined using equivalent atlases will be themselves equivalent. The space 
$H^s(\Ob,E)$ denotes the completion of $C^\infty(\Ob,E)$
with respect to any of these norms.

\begin{remark}
The isomorphism  
$C_0^\infty(\widetilde{U}/\Gamma, (\widetilde{U}\times \R^k)/\Gamma) \cong C_0^\infty(\widetilde{U}, \widetilde{U} \times \R^k)^\Gamma$ 
does not necessarily hold if $\Gamma$ is infinite. For example, let $\gamma$ act on $\R$ by $x \cdot \gamma = x+1$ and put $\Gamma = \cup_{n \in \Z} \gamma^n$. Then $C_0^\infty(\R)^\Gamma = \emptyset$, but $C_0^\infty(\R/\Gamma)\neq \emptyset$.
\end{remark}

\begin{remark}
There is another possibility to introduce Sobolev spaces on manifolds with 
conical singularities; see \cite[p. 21]{Lesch}.
\end{remark}

\subsection*{Pseudodifferential operators}
We recall some basic facts about pseudodifferential operators on orbibundles. 
For more details see \cite[p. 28]{Buc}. 

\begin{definition}
Let $E \to \Ob$ be an orbibundle. A linear mapping $P: C^\infty(\Ob, E) \to C^\infty(\Ob, E)$ is a pseudodifferential 
operator on $E \to \Ob$ of order $m$ if:
\begin{enumerate}
\item
the Schwartz kernel of $P$ is smooth outside any neighborhood of the diagonal in $\Ob \times \Ob$,
\item
for any $x \in \Ob$ and for any orbifold chart $( \tilde{U}, G_U, \phi_U)$ with $x \in U$, the operator 
$C_c^\infty(U, E) \ni f \mapsto P(f) |_U \in C^\infty(U, E) $ is given by the restriction to $G_U$-invariant 
sections of the vector bundle $\tilde{E} \to \tilde{U}$ of an order $m$ pseudodifferential operator on $\tilde{U}$ 
that commutes with the $G_U$-action.
\end{enumerate}
\end{definition}

The Sobolev embedding and the Kondrachov-Rellich theorem are valid as in the case of manifolds in  \cite[p. 60]{Shu}. 
Instead of the original proofs, one chooses a partition of unity 
and reduces the theorems to their local versions in a single chart. As sections over orbifold charts are
$\Gamma$-invariant sections over the corresponding smooth charts, the desired proofs are obtained
by repeating the local arguments from \cite{Shu} verbatim for the subspaces of $\Gamma$-invariant sections.

\begin{remark}
The Sobolev embedding and the Kondrachov-Rellich theorem were proved in \cite{Far} for orbifolds without using pseudodifferential operators.
\end{remark}

\begin{remark}\label{cr}
It follows from the Kondrachov-Rellich theorem that any pseudodifferential operator~$A$ or order $a<0$ is a compact operator.
\end{remark}

Denote by $|| A_\lambda||_{s,s-l}$ the norm of $A$ seen as an operator from $H^s(\Ob, E)$ into $H^{s-l}(\Ob,E)$. 
We want to study the dependence of $||A_\lambda||_{s,s-l}$ on $\lambda$ for large $|\lambda|$. The following theorems are established analogously to the proofs for the smooth case to which we provide references.

\begin{Th} \cite[Theorem 9.1, Ch. II]{Shu}\label{ShuEst}.
Let $A_\lambda \in L^m_d(\Ob, E;\lambda)$ with $l \geqslant m$ and $s \in \R$. Then
\begin{equation*}
\begin{gathered}
|| A_\lambda||_{s,s-l} \leqslant C_{s,l} (1 + |\lambda|^{1/d})^m, \quad \text{if } l \geqslant 0, \\
|| A_\lambda||_{s,s-l} \leqslant C_{s,l} (1 + |\lambda|^{1/d})^{-(l-m)}, \quad  \text{if } l \leqslant 0.
\end{gathered}
\end{equation*}

\end{Th}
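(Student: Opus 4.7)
The plan is to reduce the estimate to its smooth analogue from \cite[Theorem 9.1, Ch.~II]{Shu} via a partition of unity subordinate to an orbifold atlas, exactly as for the Sobolev embedding and the Kondrachov--Rellich theorem discussed above.

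First I would fix a finite orbifold atlas $\{(\widetilde{U_j}, G_j, \pi_j)\}$ of $\Ob$ together with a subordinate smooth partition of unity $\{\varphi_j\}$. By the definition of $L_d^m(\Ob, E; \lambda)$ recalled above, on each chart the operator $A_\lambda$ is given by the restriction to $G_j$-invariant sections of an order $m$ pseudodifferential operator $\widetilde{A}_{\lambda, j}$ with parameter $\lambda$ on the smooth open set $\widetilde{U_j}$, acting on the smooth bundle $\widetilde{E}_j \to \widetilde{U_j}$, and commuting with the $G_j$-action. Shubin's theorem applied to $\widetilde{A}_{\lambda, j}$ yields
\[
\|\widetilde{A}_{\lambda, j} f\|_{s-l} \leqslant C_{s,l,j} (1+|\lambda|^{1/d})^{\max(m, m-l)} \|f\|_s
\]
for every $f \in C_c^\infty(\widetilde{U_j}, \widetilde{E}_j)$, with the same bound when restricted to the subspace of $G_j$-invariant sections. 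By the definition of the orbifold Sobolev norm $\|\cdot\|_s^{G_j} = |G_j|^{-1}\|\cdot\|_s$ on the space of $G_j$-invariant sections, this estimate passes unchanged to the local orbifold Sobolev norms (the combinatorial factor $|G_j|^{-1}$ is absorbed into the constant).

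Next I would globalize: for $u \in C^\infty(\Ob, E)$, write $A_\lambda u = \sum_j A_\lambda(\varphi_j u) + (\text{smoothing remainder})$, where the remainder arises from commutators of $A_\lambda$ with the cut-offs and lies in $L^{m-1}_d(\Ob, E; \lambda)$ (so it satisfies a strictly better estimate and can be handled by an induction on $m$, or by absorbing it using that smoothing kernels on $\Ob$ pull back to $\Gamma$-invariant smoothing kernels on the charts). Summing the local estimates across the atlas and using that the orbifold Sobolev norm is by construction equivalent to the sum of the local norms yields the global bound with some constant $C_{s,l}$.

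The only nontrivial point, and the one I expect to require the most care, is checking that the coordinate-invariance machinery of \cite{Shu} goes through in the presence of the finite stabilizer groups $G_j$: the transition maps between two orbifold charts are only well-defined up to the action of $G_j$, so one must verify that a $G_j$-equivariant pseudodifferential operator on $\widetilde{U_j}$ transforms to a $G_k$-equivariant one on $\widetilde{U_k}$ and that the resulting constants depend only on the overlap data. As noted by the author for the Sobolev embedding and Kondrachov--Rellich theorem, this is handled by the same verbatim repetition of Shubin's local arguments on the subspace of $G_j$-invariant sections, and nothing beyond bookkeeping is needed.
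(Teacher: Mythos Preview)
Your proposal is correct and follows exactly the approach indicated in the paper: the paper does not give a detailed proof of this theorem but simply states that it is ``established analogously to the proofs for the smooth case to which we provide references,'' i.e., by the same partition-of-unity reduction to $G_j$-invariant sections on charts and a verbatim repetition of Shubin's local arguments. Your sketch is a faithful and more detailed elaboration of precisely this strategy.
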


\begin{Th} \cite[Theorem 9.3 and Theorem 8.4]{Shu}. \label{spectrBehaviour}
Let $H$ be a generalized Laplacian acting on sections of an orbibundle over a compact orbifold. For a subset $I \subset [0, 2\pi ]$ let 
$$ \Lambda_I := \{ r e^{i \phi}: 0 \leqslant r < \infty, \, \phi \in I\}$$
be the solid angle attached to $I$ and $B_R(0) = \{ x\in \C, \,|x|\le R\}$. Then for every $0 < \varepsilon < \pi / 2$ there exists $R > 0$ such that the 
spectrum of $H$ is contained in the set $B_R(0) \cup \Lambda_{[- \varepsilon, \varepsilon]}$. Moreover the spectrum of $H$ is discrete, and there exists $R \in \R$ such that for $\lambda > R$, 
$$||(H - \lambda)^{-1}|| \leqslant C_{s,l} / |\lambda|.$$
\end{Th}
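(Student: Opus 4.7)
The plan is to mirror Shubin's arguments for smooth compact manifolds (Theorems 8.4 and 9.3 of \cite{Shu}), reducing global statements on $\Ob$ to local statements in orbifold charts. By the definition of a pseudodifferential operator on an orbibundle, in each chart $(\widetilde{U}, G_U, \pi_U)$ the generalized Laplacian $H$ lifts to a $G_U$-invariant generalized Laplacian on the smooth manifold $\widetilde{U}$ acting on sections of $\widetilde{E} \to \widetilde{U}$, and parameter-dependent pseudodifferential calculus on $\Ob$ is simply $G_U$-equivariant calculus on $\widetilde{U}$ glued by a partition of unity subordinate to an orbifold atlas.

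First, I would construct a parametrix with parameter for $H - \lambda$ in any closed conic sector $\Lambda_J$ disjoint from the positive real axis, e.g. $J = [\varepsilon, 2\pi - \varepsilon]$. Since the principal symbol of $H$ is $|\xi|_g^2 \cdot \Id_{\widetilde{E}}$, the symbol of $H - \lambda$ is parameter-elliptic of order $2$ with parameter weight $d = 2$ on each $\widetilde{U}$. Standard symbol calculus yields a local parametrix in $L^{-2}_2(\widetilde{U}, \widetilde{E}; \lambda)$; averaging over the finite group $G_U$ preserves the symbol class, so we may take this local parametrix to be $G_U$-equivariant. Patching via a partition of unity produces a global operator $R_\lambda \in L^{-2}_2(\Ob, E; \lambda)$ satisfying $R_\lambda (H - \lambda) = \Id + K_\lambda$ with $K_\lambda$ of arbitrarily large negative order.

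Second, Theorem~\ref{ShuEst} applied to $K_\lambda$ gives $\|K_\lambda\|_{s,s} \to 0$ as $|\lambda| \to \infty$ inside $\Lambda_J$. Hence $\Id + K_\lambda$ is invertible by Neumann series for all $|\lambda|$ sufficiently large, so $(H - \lambda)^{-1} = (\Id + K_\lambda)^{-1} R_\lambda$ exists; Theorem~\ref{ShuEst} applied to $R_\lambda$ then yields $\|(H - \lambda)^{-1}\|_{s,s} \leq C_{s,l}/|\lambda|$. Taking $J$ as above and letting $R$ absorb the finitely many $\lambda$ that are not in the resolvent set inside the sector, we obtain both the spectral containment $\spec(H) \subset B_R(0) \cup \Lambda_{[-\varepsilon, \varepsilon]}$ and the resolvent estimate along the positive real axis. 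Discreteness of $\spec(H)$ then follows from Remark~\ref{cr}: fix any $\lambda_0$ in the (now known to be nonempty) resolvent set, note that $(H - \lambda_0)^{-1}$ is a pseudodifferential operator of negative order and hence compact, so its spectrum is a discrete set with $0$ as the only possible accumulation point, which translates to discreteness of $\spec(H)$ with finite-dimensional eigenspaces.

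The main obstacle lies in the first step: checking that $G_U$-averaging of a parameter-dependent symbol respects the Shubin class $L^{-2}_2$ uniformly in $\lambda$, and that gluing via the orbifold partition of unity preserves the required $\lambda$-asymptotic behavior needed to invoke Theorem~\ref{ShuEst}. These verifications are routine extensions of the smooth-case arguments once one sets up the $\Gamma$-invariant version of symbol calculus in a single chart, but they require some bookkeeping with the orbifold transition maps. After that, the remainder of the proof transcribes Shubin's arguments verbatim to the $G_U$-invariant subspaces in each chart.
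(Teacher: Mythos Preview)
Your proposal is correct and follows essentially the same approach as the paper: the paper does not spell out a proof of Theorem~\ref{spectrBehaviour} but simply declares that it is ``established analogously to the proofs for the smooth case,'' having already explained (just before Theorem~\ref{ShuEst}) the general recipe of passing to $\Gamma$-invariant sections in local charts via a partition of unity and repeating Shubin's arguments verbatim there. Your sketch is a faithful and more detailed elaboration of exactly that strategy, including the use of Theorem~\ref{ShuEst} for the resolvent bound and of Remark~\ref{cr} for discreteness.
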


\section{Functional analysis}\label{sec:!fa}

In this section we refine the necessary functional-analytic facts from \cite[Section 2]{Mu} for the case of compact
orbifolds. Note that we currently do not need our orbifold $\Ob$ to be a global quotient orbifold, but it is necessary that $\Ob$ is compact. Let $E \to \Ob$ be a orbibundle.
Consider the class of elliptic operators
$$ H: C^\infty(\Ob, E) \to C^\infty(\Ob, E),$$
which are perturbations of the Laplace operator $\Delta_E$ by a first order differential operator, i.e.
$$ H = \Delta_E + D,$$
where $D: C^\infty(\Ob, E) \to C^\infty(\Ob, E)$ is a first order differential operator.

It follows from Theorem \ref{spectrBehaviour} that for every $0 < \epsilon < \pi/2$ there exists $R >0$ such that the spectrum of $H$ is contained in 
$B_R(0) \cup \Lambda_{[-\epsilon, +\epsilon]}$. Though
$H$ is not self-adjoint in general, it has nice spectral properties. From the Sobolev space theory for compact orbifolds 
(see Section \ref{sectionPDE}) it follows that $D$ is compact relative to $\Delta_E$ and hence its root vectors are complete. This means that $L^2(\Ob, E)$ is the closure of the algebraic 
direct sum of finite-dimensional $H$-invariant subspaces $V_k$
\begin{equation}
\label{spectralDecomposition}
L^2(\Ob, E) = \overline{  \bigoplus_{k \geqslant 1} V_k},
\end{equation}
such that the restriction of $H$ to $V_k$ has a unique eigenvalue $\lambda_k$, and for each $k$ there exists $N_k \in \mathbb{N}$ such 
that $(H - \lambda_k I)^{N_k} V_k = 0$, and $|\lambda_k| \to \infty$.
For the proof of the Weyl law see Section \ref{seq2}.

Denote by $\spec(H)$ the spectrum of $H$. Suppose that $0 \not \in \spec(H)$. It follows from Theorem \ref{spectrBehaviour} that there exists an Agmon 
angle $\theta$ for $H$ and we can define the square root $H^{1/2}_\theta$.
If $\theta$ is fixed, we simply denote $H_\theta^{1/2}$ by $H^{1/2}$. Note that $H^{1/2}$ is a classical pseudodifferential operator with 
 principal symbol
$$\sigma(H^{1/2}) (x, \sigma) = || \sigma||_x \cdot \Id_E.$$
The principal symbols of $H^{1/2}$ and $\Delta_E^{1/2}$ coincide, hence 
$$ H^{1/2} = \Delta^{1/2}_E+B.$$
Here $B$ is a pseudodifferential operator of order zero, thus it extends to a bounded operator in $L^2(\Ob, E)$. The formula implies the following spectral properties of $H^{1/2}$.
\begin{Lemma}
\label{lemma123456}
The resolvent of $H^{1/2}$ is compact, and the spectrum of $H^{1/2}$ is discrete. There exists $b > 0$ and $c \in \R$ such that the spectrum 
of $H^{1/2}$ is contained in the domain 
$$\Omega_{b,c} = \{ \lambda \in \C: \RRe(\lambda) > c,\, |\IIm(\lambda)|<b \}.$$
\end{Lemma}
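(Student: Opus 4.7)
The plan is to establish the three claims of the lemma in sequence, leveraging the decomposition $H^{1/2} = \Delta_E^{1/2} + B$ with $B$ a bounded zero-order operator, which the paragraph before the lemma has already produced.

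First I would prove compactness of the resolvent. Since $H^{1/2}$ is a classical pseudodifferential operator of order $1$ on a compact orbifold with $0 \not\in \spec(H)$ (so that $0 \not\in \spec(H^{1/2})$ for the branch fixed by $\theta$), the inverse $(H^{1/2})^{-1}$ is, by standard $\Psi$DO calculus, a pseudodifferential operator of order $-1$. By Remark \ref{cr} every negative-order pseudodifferential operator on a compact orbifold is compact, so $(H^{1/2})^{-1}$ is compact. The standard resolvent identity then yields compactness of $(H^{1/2} - \lambda)^{-1}$ for all $\lambda$ in the resolvent set. Discreteness of $\spec(H^{1/2})$ follows from compactness of the resolvent by the classical spectral theorem for operators with compact resolvent, and the spectrum consists of eigenvalues of finite algebraic multiplicity.

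Next I would localize the spectrum inside the strip $\Omega_{b,c}$. Here the key input is that $\Delta_E^{1/2}$ is self-adjoint and nonnegative (since $\Delta_E$ is a nonnegative self-adjoint generalized Laplacian, so one can take its square root via the spectral calculus) while $B$ is a bounded operator on $L^2(\Ob, E)$. Let $\lambda \in \spec(H^{1/2})$. Because the spectrum is pure point, I pick a true (non-generalized) eigenvector $f$ with $H^{1/2} f = \lambda f$ and $\|f\| = 1$. Then
$$\lambda = \langle H^{1/2} f, f \rangle = \langle \Delta_E^{1/2} f, f\rangle + \langle B f, f\rangle.$$
The first term on the right is real and nonnegative, so taking imaginary and real parts gives
$$|\IIm(\lambda)| = |\IIm \langle B f, f\rangle| \leqslant \|B\|, \quad \RRe(\lambda) \geqslant -|\RRe \langle B f, f\rangle| \geqslant -\|B\|.$$
Choosing $b := \|B\| + 1$ and $c := -\|B\| - 1$ places $\spec(H^{1/2})$ inside $\Omega_{b,c}$.

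The main technical point I expect to clarify carefully is the numerical-range step: one has to make sure that each $\lambda \in \spec(H^{1/2})$ admits an honest eigenvector (not merely a root vector in some Jordan block), so that the identity $\lambda = \langle H^{1/2} f, f \rangle$ is available. This is automatic for a closed operator whose spectrum is pure point of finite multiplicity, because the first column of any Jordan basis for the root subspace consists of genuine eigenvectors; it is therefore a consequence of the compact-resolvent step rather than an independent difficulty. Everything else reduces to the spectral calculus for $\Delta_E^{1/2}$ and to the ellipticity/order statements already recorded in Section \ref{sectionPDE}.
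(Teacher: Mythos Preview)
Your proof is correct and follows precisely the route the paper indicates: the sentence preceding the lemma (``The formula implies the following spectral properties of $H^{1/2}$'') signals that the decomposition $H^{1/2}=\Delta_E^{1/2}+B$ with $B$ bounded and $\Delta_E^{1/2}$ self-adjoint nonnegative is the intended tool, and your compact-resolvent and numerical-range steps are the natural way to cash this in. The paper supplies no further argument, so your write-up is exactly the elaboration the text leaves to the reader.
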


It follows from the spectral decomposition (\ref{spectralDecomposition}) that $H^{1/2}$ has the same spectral decomposition with 
eigenvalues $\lambda^{1/2}$, $\lambda \in \spec(H)$ and multiplicities $m(\lambda^{1/2}) = m(\lambda)$. We need to introduce some class of function for further use. 
\begin{definition}
Denote by $\mathcal{P}(\C)$ be the space of Paley-Wiener functions on $\C$, that is 
\begin{equation*}
 \mathcal{P}(\C) = \cup_{R>0} \mathcal{P}^R(\C)
\end{equation*}
with the inductive limit topology, where $\mathcal{P}^R(\C)$ is the space of entire functions $\phi$ on $\C$ such that for every $N \in \mathbb{N}$
there exists $C_N > 0$ such that 
\begin{equation}
\label{PAYLEYWIENER}
|\phi(\lambda)| \leqslant C_N (1+|\lambda|)^{-N} e^{R|\IIm(\lambda)|}, \quad \lambda\in \C.
\end{equation}

\end{definition}
Given $h\in C_0^\infty((-R,R))$, let
$$ \varphi(\lambda) = \frac{1}{\sqrt{2 \pi}} \int_\R h(r) e^{-ir\lambda} dr, \quad \lambda \in \C,$$
be the Fourier-Laplace transform of $h$. Then $\varphi$ satisfies (\ref{PAYLEYWIENER}) for every $N \in \mathbb{N}$, that is $\varphi \in \mathcal{P}^R(\C)$.
Conversely, by the Paley-Wiener theorem, every $\phi \in \mathcal{P}^R(\C)$ is the Fourier-Laplace transform of a function in $C_c^\infty((-R,R))$.

Recall that we are assuming  $0 \not \in \spec(H)$. For $b > 0$ and $d \in \R$ let $\Gamma_{b,d}$ be the contour which is the union of the two half-lines
$L_{\pm b,d} = \{ z \in \C: \IIm(z)=\pm b, \, \RRe(z)\geqslant d \}$ and the semicircle 
$S = \{ +be^{i \theta}: \pi/2 \leqslant \theta \leqslant 3\pi/2 \}$,
oriented clockwise. By Lemma \ref{lemma123456} there exists $b>0$, $d \in \R$ such that $\spec(H^{1/2})$ is contained in the interior of $\Gamma_{b,d}$. 
For an even Paley-Wiener function $\varphi$ put
$$ \varphi(P^{1/2}) = \frac{i}{2\pi}\int_{\Gamma_{b,d}} \varphi(\lambda) (H^{1/2}-\lambda)^{-1} d\lambda.$$
For the definition of $\varphi(H^{1/2})$ in the case when $0 \in \spec(H)$ see \cite[p.~11]{Mu}. 
\begin{Lemma}\cite[Lemma 2.4]{Mu}
$\varphi(P^{1/2})$ is an integral operator with a smoothing kernel.
\end{Lemma}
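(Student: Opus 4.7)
The goal is to show that the Schwartz kernel of $\varphi(H^{1/2})$ is smooth. By the orbifold versions of the Sobolev embedding and Rellich theorems recalled in Subsection \ref{sectionPDE}, it suffices to establish that $\varphi(H^{1/2})$ extends to a bounded operator $H^{-k}(\Ob, E) \to H^{k}(\Ob, E)$ for every $k \geqslant 0$. I plan to prove this in two steps, obtaining the mapping properties $L^2 \to H^{2k}$ and $H^{-2k} \to L^2$ separately, for arbitrary $k$.

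For the first step, observe that multiplication by $\lambda^{2k}$ preserves $\p(\C)$, because the defining bound (\ref{PAYLEYWIENER}) absorbs any polynomial factor. Setting $\psi_k(\lambda) := \lambda^{2k}\varphi(\lambda)$, the Dunford holomorphic calculus yields $\psi_k(H^{1/2}) = H^{k}\varphi(H^{1/2})$. I would then show that the contour integral defining $\psi_k(H^{1/2})$ converges in the operator norm on $L^2(\Ob, E)$: Theorem \ref{ShuEst} applied to the parameter-dependent resolvent $(H^{1/2}-\lambda)^{-1}$ (of order $-1$ with weight $1$) gives $\|(H^{1/2}-\lambda)^{-1}\|_{0,0} \leqslant C(1+|\lambda|)^{-1}$ on the non-compact part of $\Gamma_{b,d}$, while $|\psi_k(\lambda)| \leqslant C_{N,k}(1+|\lambda|)^{-N}$ for every $N$. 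Hence $H^k \varphi(H^{1/2})$ extends to a bounded operator on $L^2(\Ob,E)$, so elliptic regularity for $H$ yields $\varphi(H^{1/2}): L^2(\Ob,E) \to H^{2k}(\Ob,E)$.

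For the second step, I would apply the same reasoning to the formal adjoint $H^*$, which is again a first-order perturbation of $\Delta_E$ and hence also falls under the framework of Section \ref{sec:!fa}. Using the resolvent identity one verifies $\varphi(H^{1/2})^* = \overline{\varphi}((H^*)^{1/2})$ after reflecting the Agmon angle and the contour, and the first step applied to $H^*$ yields $\overline{\varphi}((H^*)^{1/2}): L^2(\Ob,E) \to H^{2k}(\Ob,E)$. Dualising gives $\varphi(H^{1/2}): H^{-2k}(\Ob,E) \to L^2(\Ob,E)$; composing with step one and letting $k$ tend to infinity closes the argument.

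The main obstacle I foresee does not lie in the analytic estimates, which reduce to Paley-Wiener decay versus parameter-dependent Shubin bounds, but in the bookkeeping of the Dunford calculus in the non-self-adjoint orbifold setting: one must justify the identities $\psi_k(H^{1/2}) = H^k \varphi(H^{1/2})$ and $\varphi(H^{1/2})^* = \overline{\varphi}((H^*)^{1/2})$ from the resolvent identity and Cauchy's theorem for operator-valued integrals alone, and must verify that the Agmon angle $\theta$ for $H$ yields a compatible Agmon angle for $H^*$ so that the square roots on the two sides indeed correspond under taking adjoints.
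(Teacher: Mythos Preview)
The paper does not supply its own proof of this lemma; it simply records the statement with the citation \cite[Lemma 2.4]{Mu} and moves on. So there is no in-paper argument to compare against --- the implicit claim is that M\"uller's proof carries over verbatim once the orbifold pseudodifferential calculus of Subsection~\ref{sectionPDE} is in place.

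Your strategy --- show $\varphi(H^{1/2}): H^{-k}\to H^{k}$ for every $k$ by combining Paley--Wiener decay on the contour with parameter-dependent resolvent bounds, then dualise via $H^*$ --- is exactly the kind of argument that underlies the cited lemma, and it is sound. The two caveats you raise are the right ones. For the first, the identity $H^{k}\varphi(H^{1/2})=\psi_k(H^{1/2})$ follows cleanly once you note that $(H^{1/2})^{2k}(H^{1/2}-\lambda)^{-1}$ differs from $\lambda^{2k}(H^{1/2}-\lambda)^{-1}$ by a polynomial in $\lambda$ times powers of the resolvent, whose contour integrals vanish; alternatively, just integrate by parts in the Dunford integral after writing $\lambda^{2k}$ as a limit of Cauchy kernels. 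For the second, you can sidestep the Agmon-angle bookkeeping for $H^*$ entirely: instead of dualising, repeat step one with $\psi_k(\lambda)=\lambda^{2k}\varphi(\lambda)$ but bound $\|(H^{1/2}-\lambda)^{-1}\|_{-2k,-2k}$ directly on the contour (Theorem~\ref{ShuEst} gives this with the same constant as for $\|\cdot\|_{0,0}$, since the parametrix construction is uniform in the Sobolev index). That yields $\varphi(H^{1/2}):H^{-2k}\to H^{-2k}$ bounded, and composing with the first step for a larger $k$ finishes without ever touching $H^*$.
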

We need to establish an auxiliary result about smoothing operators. Let
$$ A: L^2(\Ob, E) \to L^2(\Ob, E)$$
be an integral operator with a smooth kernel $K$.

\begin{Lemma}\label{proposition_about_smoothing_operators}
$A$ is a trace class operator and
$$\Tr(A) = \int_\Ob \tr K(x,x) d\mu(x).$$
\end{Lemma}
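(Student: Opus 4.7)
My approach proceeds in three steps: prove that $A$ is trace class, reduce the trace identity to the case of a kernel supported in a single orbifold chart via a partition of unity, and finally compute in a single chart by lifting to the smooth cover and averaging over the local isotropy group.

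For the trace-class property, smoothness of $K$ combined with compactness of $\Ob$ makes $A$ a smoothing operator, so $A$ is bounded from $H^{-s}(\Ob, E)$ into $H^s(\Ob, E)$ for every $s \geqslant 0$. I would factor $A = L^{-N}(L^N A)$ where $L$ is a positive self-adjoint elliptic pseudodifferential operator of order $2$ on $E$; for $N$ large enough, Theorem \ref{ShuEst} combined with the Weyl asymptotics developed in Section \ref{seq2} makes $L^{-N}$ Hilbert-Schmidt, while $L^N A$ remains smoothing and is therefore itself Hilbert-Schmidt, so $A$ is trace class.

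To localize, pick a finite orbifold atlas $\{(\widetilde{U}_i, G_i, \pi_i)\}$ for $\Ob$ and a subordinate smooth partition of unity $\{\chi_i\}$, refined if necessary so that every pair $(U_i, U_j)$ with $U_i \cap U_j \neq \emptyset$ is contained in a common chart. Writing $M_\chi$ for multiplication by $\chi$, linearity of the trace gives
$$ \Tr(A) = \sum_{i,j} \Tr(M_{\chi_i} A M_{\chi_j}), $$
where each summand is an integral operator whose smooth kernel $\chi_i(x) K(x,y) \chi_j(y)$ is compactly supported in a single chart. Since $\sum_{i,j}\chi_i(x)\chi_j(x) \equiv 1$, it then suffices to establish the formula for an integral operator $B$ whose smooth kernel $K_B$ is compactly supported in a single chart $U = \widetilde{U} / G$.

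For such a $B$, the pseudodifferential description of Section \ref{sectionPDE} lifts $B$ to a $G$-invariant integral operator $\widetilde{B}$ on $\widetilde{U}$ with smooth $G$-equivariant kernel $\widetilde{K}_B$, identifying $B$ with the restriction of $\widetilde{B}$ to $L^2(\widetilde{U}, \widetilde{E})^G \cong L^2(U, E|_U)$. Writing $T_g$ for the action of $g \in G$ on sections and $P = \tfrac{1}{|G|}\sum_{g \in G} T_g$ for the orthogonal projection onto the $G$-invariant subspace, one has $\Tr_{L^2(U, E|_U)}(B) = \Tr_{L^2(\widetilde{U}, \widetilde{E})}(\widetilde{B} P)$, and the classical smooth-manifold version of the lemma applied to $\widetilde{B} P$, whose kernel is $\tfrac{1}{|G|} \sum_g \widetilde{K}_B(\widetilde{x}, g \widetilde{y}) T_g$, produces
$$ \Tr(B) = \tfrac{1}{|G|} \int_{\widetilde{U}} \tr \Bigl(\sum_{g \in G} \widetilde{K}_B(\widetilde{x}, g \widetilde{x}) T_g\Bigr) d\widetilde{\mu}(\widetilde{x}). $$
Since the integrand is $G$-invariant, the integral collapses to one over a fundamental domain, hence over $U$, and the combination $\sum_g \widetilde{K}_B(\widetilde{x}, g \widetilde{x}) T_g$ is precisely the downstairs diagonal kernel $K_B(x,x)$ under the orbifold identification. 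The main technical point will be the bookkeeping at singular points, where the fibre $E_x = \widetilde{E}_{\widetilde{x}}/G_{\widetilde{x}}$ is not a vector space, so $\tr K_B(x,x)$ has to be interpreted compatibly with the quotient and the averaging identity must be verified with the correct normalization inherited from the $G$-equivariant structure on $\widetilde{E}$.
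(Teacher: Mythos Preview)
Your argument is correct, but it follows a different route from the paper. The paper's proof is global and spectral: it fixes a Hermitian connection $\nabla^E$, forms the Bochner--Laplace operator $\Delta_E = (\nabla^E)^*\nabla^E$, notes that $\Delta_E$ is essentially self-adjoint with discrete spectrum on the orbifold (by \cite[p.~37, Theorem~3.5]{Buc} and Theorem~\ref{spectrBehaviour}), and then simply invokes \cite[Proposition~2.5]{Mu}. That argument uses the orthonormal eigenbasis $\{\phi_j\}$ of $\Delta_E$ on $L^2(\Ob,E)$: one factors $A = (\Id+\Delta_E)^{-N}\circ(\Id+\Delta_E)^N A$ for the trace-class statement, and computes $\Tr(A)=\sum_j\langle A\phi_j,\phi_j\rangle$ directly as a Mercer-type sum converging to $\int_\Ob \tr K(x,x)\,d\mu$. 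No partition of unity, no passage to local charts, and crucially no special treatment of the singular strata is needed, so the ``bookkeeping at singular points'' you flag as the main technical issue is bypassed entirely.

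Your localization-and-lift approach is closer in spirit to the way the heat-trace asymptotics are handled later in Section~\ref{seq2}; it trades the global spectral input for the classical smooth result on each cover plus the averaging identity $\Tr_{L^2(U)}(B)=\Tr_{L^2(\widetilde U)}(\widetilde B P)$. This is heavier in bookkeeping but has the virtue of making the role of the local isotropy explicit. One small remark: your appeal to the Weyl asymptotics of Section~\ref{seq2} for the Hilbert--Schmidt property of $L^{-N}$ is a forward reference in the paper's order and is in any case unnecessary, since a pseudodifferential operator of order $-2N$ with $2N>\dim\Ob$ is already Hilbert--Schmidt by the Sobolev theory of Section~\ref{sectionPDE}.
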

\begin{proof}
Let $\nabla^E$ be a Hermitian connection in $E$ and let $\Delta_E = (\nabla^E)^*\nabla^E$ be the associated Bochner-Laplace operator. 
Then $\Delta_E$ is a second order
non-negative elliptic operator, which is essentially self-adjoint \cite[p.37, Theorem 3.5]{Buc}. Its spectrum is discrete by
 Theorem \ref{spectrBehaviour}.
The 
rest of the proof follows \cite[Proposition~2.5]{Mu}.
\end{proof}

Now we apply this result to $\varphi(H^{1/2})$, where $\varphi \in \mathcal{P}(\C)$. Let $K_\varphi(x,y)$ be the kernel of $\varphi(H^{1/2})$. Then by Lemma 
\ref{proposition_about_smoothing_operators}, $\varphi(H^{1/2})$ is a trace class operator, and we have
\begin{equation}\label{someTraceEquation}
\Tr \varphi(H^{1/2}) = \int_\Ob \tr K_\varphi(x,x) d \mu(x).
\end{equation}
By Lidskii's theorem \cite[Theorem 8.4]{GK}, the trace is equal to the sum of the eigenvalues of $\varphi(P^{1/2})$, counted 
with their algebraic multiplicities.
One can show that $\varphi(H^{1/2})$ leaves the decomposition (\ref{spectralDecomposition}) invariant and that 
$\varphi(H^{1/2})|_{V_k}$ has unique eigenvalue $\varphi(\lambda_k^{1/2})$. Now, applying Lidskii's theorem and 
(\ref{someTraceEquation}), we get the following

\begin{Lemma}\label{quantifiableconnection}
Let $\varphi\in \mathcal{P}(\C)$ be even. Then we have
\begin{equation}
\sum_{\lambda\in \spec(H)} m(\lambda)\varphi(\lambda^{1/2}) = \int_\Ob \tr K_\varphi (x,x) dx,
\end{equation}
where $m(\lambda)$ is the multiplicity of $\lambda$.
\end{Lemma}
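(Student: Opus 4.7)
The plan is to combine three ingredients already assembled in this section: the smoothing/trace-class property of $\varphi(H^{1/2})$, the invariance of the spectral decomposition (\ref{spectralDecomposition}) under $\varphi(H^{1/2})$, and Lidskii's theorem.

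First, I would invoke the preceding Lemma 2.4 of \cite{Mu}, which tells us that $\varphi(H^{1/2})$ is an integral operator with smooth kernel $K_\varphi(x,y)$. Applying Lemma \ref{proposition_about_smoothing_operators} gives that $\varphi(H^{1/2})$ is trace class and
$$\Tr \varphi(H^{1/2}) = \int_\Ob \tr K_\varphi(x,x)\, d\mu(x),$$
which is the right-hand side of the lemma. It remains to match this with the spectral sum on the left-hand side.

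Next, I would verify the two claims the author states without proof: that $\varphi(H^{1/2})$ preserves each $V_k$, and that its restriction to $V_k$ has unique eigenvalue $\varphi(\lambda_k^{1/2})$. Since $V_k$ is finite-dimensional and $H$-invariant, it is invariant under $(H-\lambda)^{-1}$ for every $\lambda$ in the resolvent set. By Lemma \ref{lemma123456}, $\spec(H^{1/2})$ lies in the interior of the contour $\Gamma_{b,d}$, so the defining contour integral
$$H^{1/2} = \frac{i}{2\pi}\int_{\Gamma_{b,d}} \lambda\,(H^{1/2}-\lambda)^{-1} d\lambda$$
and the analogous formula for $\varphi(H^{1/2})$ imply that both operators preserve $V_k$. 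On $V_k$ we may write $H|_{V_k} = \lambda_k\,\Id + N_k$ with $N_k$ nilpotent. A short computation with the functional calculus on this finite-dimensional invariant subspace shows that $H^{1/2}|_{V_k} = \lambda_k^{1/2}\Id + N_k'$ with $N_k'$ nilpotent, and hence $\varphi(H^{1/2})|_{V_k} = \varphi(\lambda_k^{1/2})\Id + N_k''$ with $N_k''$ nilpotent; so its only eigenvalue is $\varphi(\lambda_k^{1/2})$ with algebraic multiplicity $\dim V_k = m(\lambda_k)$.

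Finally, Lidskii's theorem \cite[Theorem 8.4]{GK} gives
$$\Tr \varphi(H^{1/2}) = \sum_{k \geqslant 1} m(\lambda_k)\, \varphi(\lambda_k^{1/2}) = \sum_{\lambda \in \spec(H)} m(\lambda)\, \varphi(\lambda^{1/2}),$$
which combined with (\ref{someTraceEquation}) yields the stated identity. The case $0 \in \spec(H)$ only requires the modified definition of $\varphi(H^{1/2})$ from \cite[p.~11]{Mu}, and the same argument applies verbatim.

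The main obstacle is the invariance/eigenvalue statement in the middle step: once one is willing to commit to the contour definition of $\varphi(H^{1/2})$ and carry out the finite-dimensional functional calculus on each $V_k$, everything else is formal. The subtlety is that $H$ need not be self-adjoint, so $V_k$ is only a generalized eigenspace and one must keep track of the nilpotent parts; the key point is that they propagate through $H^{1/2}$ and $\varphi$ while leaving only $\varphi(\lambda_k^{1/2})$ on the diagonal.
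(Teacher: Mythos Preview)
Your proof is correct and follows exactly the route the paper takes: smoothing kernel via \cite[Lemma~2.4]{Mu}, trace-class property and trace formula from Lemma~\ref{proposition_about_smoothing_operators}, invariance of each $V_k$ and identification of the unique eigenvalue $\varphi(\lambda_k^{1/2})$, and then Lidskii's theorem. In fact you supply the details of the invariance/eigenvalue step that the paper only asserts with ``One can show that\ldots''; the sole cosmetic wrinkle is that your displayed contour formula for $H^{1/2}$ is self-referential---it would be cleaner to first note that $H^{1/2}$ (defined via the Agmon-angle functional calculus in $H$) preserves $V_k$ because $(H-\lambda)^{-1}$ does, and then run the contour argument for $\varphi(H^{1/2})$.
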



\section{The heat kernel}
\label{seq2}

In this section we consider an orbibundle $E$ over a general compact orbifold $\Ob$ and a generalized Laplacian $H$ acting on sections of $E$. 
Our goal is to construct and study the heat kernel for $H$.



\subsection{Existence and uniqueness of the heat kernel.}
\begin{definition}
We say that $K \in \Gamma((0,\infty) \times \Ob \times \Ob, E \boxtimes E^*)$ is a heat kernel, if it satisfies:

\begin{enumerate}
\item K is  $C^0$ in all three variables, $C^1$ in the first, and $C^2$ in the second,
\item $(\dt + H_x) K(t,x,y) = 0$, where $H_x$ acts on the second variable,
\item $\lim_{t \to +0} K(t,x, \cdot) = \delta_x$ for all $x \in \Ob$.
\end{enumerate}
\end{definition}

The existence of the heat kernel for global quotient orbifolds and the Laplace-Beltrami operator were originally proven in \cite{Donnelly}. In \cite{Dry} it was extended 
 to the case of general compact orbifolds. We generalize this result to a Laplacian type operator $H$ acting on an orbibundle $E$ over a compact orbifold  $\Ob$. 
 The main idea is to take an approximate solution of the heat equation from \cite[Theorem 2.26]{Berl} and use the construction of a parametrix for the heat operator as in \cite{Dry}.

 Cover $\Ob$ with finitely many charts $(\widetilde{U}_\al, G_\al, \pi_\al)$ where $\widetilde{U}_\al / G_\al = U_\al$; the notations are as in Section~\ref{sec:orb}. 
For an open subset $U \subset \widetilde{U_\al}$ denote 
$U_\epsilon := \{(x,y) \in \widetilde{U}_\al \times \widetilde{U}_\al \ | \ d(x,y) < \epsilon\}$, where $d(x,y)$ is the distance between $x$ and $y$. We want to find an approximate heat kernel on $U_\epsilon$.

\begin{Lemma}\cite[Theorem 2.26]{Berl}
\label{localInvariants}
For $l \in \Z$ there exist $u_i(x,y) \in C^\infty(U_\epsilon \times U_\epsilon,  E \boxtimes E^*)$ with $i = 0, \ldots, l,$ such that
\begin{equation}\label{formalSolutionOfHeatEquation}
\left(\dt + H\right) \left[   (4 \pi t)^{-n/2}\cdot \exp^{-d(x,y)^2/4t} \cdot \sum_{i=0}^l u_i(x,y)t^i \right]= (4 \pi)^{-n/2}t^{l-n/2}\cdot \exp^{-d(x,y)^2/4t} \cdot H u_l.
\end{equation}
\end{Lemma}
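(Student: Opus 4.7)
The plan is to lift the problem to the chart $\widetilde{U}_\alpha$, where $H$ pulls back to an honest Laplace-type operator on the ordinary vector bundle $\widetilde{E}\to\widetilde{U}_\alpha$. Shrinking $\epsilon$ below the injectivity radius, for $(x,y)\in U_\epsilon$ there is a unique minimising geodesic from $y$ to $x$, and hence $r(x,y):=d(x,y)$ is smooth on $U_\epsilon\setminus\{x=y\}$ and satisfies the eikonal equation $|\nabla_x r|^2=1$. I use parallel transport $\tau(x,y):\widetilde{E}_y\to\widetilde{E}_x$ along the radial geodesic to view each $u_i(x,y)$, with $y$ fixed, as a smooth section of $\End(\widetilde{E})$ over a geodesic ball. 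Setting $q(t,x,y):=(4\pi t)^{-n/2}e^{-r^2/4t}$, the ansatz under study is $q\sum_{i=0}^l t^i u_i$.

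The core of the proof is the direct computation
$$\left(\partial_t+H\right)(q\, t^i u_i)=q\,t^{i-1}\Bigl(\nabla_{r\partial_r}u_i+\bigl(i+\tfrac12\, r\partial_r\log j\bigr)u_i\Bigr)+q\,t^i\,Hu_i,$$
where $r\partial_r$ is the radial Euler field generated by geodesics from $y$ and $j(x,y)$ is the Jacobian of the exponential map. Here the singular $t^{-2}$ contribution of $\partial_t e^{-r^2/4t}$ cancels exactly against the corresponding piece of $H\, e^{-r^2/4t}$ thanks to $|\nabla r|^2=1$, leaving only the radial transport operator above. Summing over $i=0,\ldots,l$ and grouping by powers of $t$, the coefficient of $q\,t^{i-1}$ becomes
$$\nabla_{r\partial_r}u_i+\bigl(i+\tfrac12\, r\partial_r\log j\bigr)u_i+Hu_{i-1},\qquad i=0,\ldots,l,$$
with the convention $u_{-1}=0$. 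Setting each of these to zero produces a system of first-order ODEs along the radial geodesics emanating from $y$.

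For $i=0$ the equation is homogeneous and, together with the normalisation $u_0(y,y)=\Id$, admits the unique smooth solution $u_0(x,y)=j(x,y)^{-1/2}\tau(x,y)$, the classical half-Jacobian-twisted parallel transport. For $i\geqslant 1$ the equation is inhomogeneous on the same geodesic and, after multiplying by an appropriate integrating factor, is solved by a single quadrature in the radial parameter; regularity at $r=0$ pins down the constant and yields a unique smooth $u_i$ on $U_\epsilon$. By construction all the intermediate coefficients vanish, so $(\partial_t+H)(q\sum_{i=0}^l t^iu_i)$ collapses to the remainder $q\,t^l\,Hu_l=(4\pi)^{-n/2}t^{l-n/2}e^{-r^2/4t}Hu_l$, which is the claimed identity.

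The only real obstacle is the bookkeeping for $(\partial_t+H)(q\,t^iu_i)$: one must write the Laplace-type operator in geodesic polar coordinates and invoke both $|\nabla r|^2=1$ and the divergence formula for the radial field so that the potentially divergent $t^{-2}$ and $t^{-1}$ singularities reorganise into the transport operator above. Once this identity is in place, the rest of the argument is a mechanical recursion along radial geodesics, and everything descends from $\widetilde{U}_\alpha$ to $U_\alpha$ because the construction only uses intrinsic Riemannian data and $H$ itself.
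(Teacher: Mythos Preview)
Your argument is correct and is exactly the standard construction from Berline--Getzler--Vergne that the paper cites; the paper itself gives no proof of this lemma, relying entirely on the reference \cite[Theorem~2.26]{Berl}. One small remark: since $U_\epsilon$ is by definition a subset of $\widetilde{U}_\alpha\times\widetilde{U}_\alpha$, the statement is already set on the smooth chart, so your ``lift to $\widetilde{U}_\alpha$'' is not an extra step but simply the setting of the lemma.
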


\begin{remark}
In \cite{Berl} the functions $u_i$ were constructed for 
$\hat{H}: \Gamma(M, E \otimes |\Lambda|^{1/2}) \to \Gamma(M, E \otimes |\Lambda|^{1/2})$ instead of $H: \Gamma(M, E) \to \Gamma(M,E)$, 
where $|\Lambda|$ is density and $M$ is a manifold.
\end{remark}

For the rest of this subsection we follow \cite{Dry}. First we construct a parametrix for the heat operator. Recall that

\begin{definition}
$F\in \Gamma((0, \infty) \times \Ob \times \Ob,  E \boxtimes E^*)$ is a parametrix for the heat operator if:
\begin{enumerate}
\item $F$ is $C^\infty$-smooth,
\item $(\dt + H_x) F(t,x,y)$ extends to a $C^0$ function in all three variables,
\item $\lim_{t \to 0} F(t,x, \cdot )=\delta_x$ for all $x \in \Ob$.
\end{enumerate}
\end{definition}

Identify $\widetilde{U}_\al$ with the unit ball, let $p_\al$ be the center of $U_\al$ and $\widetilde{p}_\al$ the center of $\widetilde{U}_\al$. Let $W_\al$, respectively $V_\al$, 
be the geodesic ball of radius $\epsilon/4$, respectively $\epsilon/2$, centered at $p_\al$, and let $\widetilde{W}_\al$ and 
$\widetilde{V}_\al$ be the corresponding balls centered at  $\widetilde{p}_\al$ in $\widetilde{U}_\al$. We may assume that the family of balls $\{ W_\al\}$ still covers $\Ob$.
For each $\al$ and each non-negative integer $m$ we define 
$\widetilde{H_\al}^{(m)}: \R_+ \times \widetilde{U}_\al \times \widetilde{U}_\al \to \End(\R^k)$ by 
$$ \widetilde{H_\al}^{(m)} (t, \tilde{x}, \tilde{y}) = (4 \pi t)^{-n/2} e^{-d(\tilde{x}, \tilde{y})/4t} (u_0(\tilde{x}, \tilde{y}) + \ldots + t^m u_m(\tilde{x}, \tilde{y})),$$
where the $u_i$ are the invariants from Lemma \ref{localInvariants}. The sum $\sum_{\gamma \in G_\al} \widetilde{H_\al}^{(m)} (t, \tilde{x}, \tilde{y})$ is $G_\al$-invariant in both $\tilde{x}$ and $\tilde{y}$ and thus descends to a function $H_\al^{(m)}$ on $\R_+ \times U_\al \times U_\al$.
Let $\psi_\al: \Ob \to \R$ be a $C^\infty$ cut-off function, which is identically one on $V_\al$ 
and is supported in $U_\al$. Let $\{ \eta_\al \}$ be a partition of unity on $\Ob$ with 
$\supp (\eta_\al) \subset \overline{W_\al}$. 

\begin{definition}
Define $H^{(m)} \in \Gamma(\R_+ \times \Ob \times \Ob, E \boxtimes E^*)$ by
$$H^{(m)}(t,x,y) = \sum_{\al} \psi_\al(x) \eta_\al(y) H^{(m)}_\al (t,x,y).$$
\end{definition}

\begin{Lemma}\cite[page 13]{Dry}.
$H^{(m)}$ is a parametrix for the heat operator on $\Ob$ if $m>n/2$.
\end{Lemma}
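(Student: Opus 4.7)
The plan is to verify the three properties of a parametrix in turn, using the approximate solution property (\ref{formalSolutionOfHeatEquation}), the exponential decay of $e^{-d(x,y)^2/4t}$, and the geometric nesting of the supports $W_\alpha \subset V_\alpha \subset U_\alpha$.

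First, for smoothness on $\R_+ \times \Ob \times \Ob$: away from $t=0$ each $\widetilde{H}^{(m)}_\alpha(t,\tilde x,\tilde y)$ is smooth in all variables by the explicit formula, and the sum over $\gamma \in G_\alpha$ is finite, so $H^{(m)}_\alpha$ is smooth on $\R_+ \times U_\alpha \times U_\alpha$. Smoothness of $H^{(m)}$ then follows because $\psi_\alpha \eta_\alpha$ is a smooth compactly supported cutoff whose support is contained in $U_\alpha \times U_\alpha$, and the partition of unity is finite. Second, I would verify the initial condition. Since $u_0(\tilde x, \tilde x) = \mathrm{Id}_E$, the diagonal contribution of $\widetilde{H}^{(m)}_\alpha$ produces the standard Gaussian whose limit as $t\to 0$ is $\delta_{\tilde x}(\tilde y)$; the off-diagonal contributions from $\gamma \neq e$ vanish uniformly on compact sets because $d(\tilde x, \gamma \tilde y)$ is bounded below. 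Hence $\lim_{t \to 0} H^{(m)}_\alpha(t,x,\cdot) = \delta_x$, and since $\psi_\alpha \equiv 1$ on a neighborhood of $\supp(\eta_\alpha)$ we have $\psi_\alpha(x)\eta_\alpha(x) = \eta_\alpha(x)$, so $\sum_\alpha \psi_\alpha(x)\eta_\alpha(y)\delta_x(y) = \delta_x(y)$ by $\sum \eta_\alpha = 1$.

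The core of the argument, and where $m > n/2$ enters, is showing that $(\partial_t + H_x) H^{(m)}(t,x,y)$ extends continuously to $t=0$. Distributing the operator, and using that $(\partial_t + H_x)$ does not act on $\eta_\alpha(y)$, one obtains
\begin{equation*}
(\partial_t + H_x) H^{(m)} = \sum_\alpha \psi_\alpha(x)\eta_\alpha(y) (\partial_t + H_x) H^{(m)}_\alpha + \sum_\alpha \eta_\alpha(y)\bigl[H_x, \psi_\alpha\bigr] H^{(m)}_\alpha,
\end{equation*}
where the commutator is a first order differential operator in $x$ supported in $U_\alpha \setminus V_\alpha$. For the first sum, Lemma \ref{localInvariants} together with the descent/average over $G_\alpha$ shows that $(\partial_t + H_x) H^{(m)}_\alpha$ equals a finite sum of terms of the form $(4\pi)^{-n/2} t^{m-n/2} e^{-d(\tilde x, \gamma \tilde y)^2/4t}\, H u_m(\tilde x, \gamma \tilde y)$; since $m > n/2$, the factor $t^{m-n/2}$ tends to zero, so each such term extends to a $C^0$ function across $t=0$. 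For the commutator sum, the support condition forces $d(x,p_\alpha) \geq \epsilon/2$ while $y \in \overline{W_\alpha}$ gives $d(y, p_\alpha) \leq \epsilon/4$, so $d(x,y) \geq \epsilon/4$ and the Gaussian $e^{-d(x,y)^2/4t}$ vanishes to infinite order as $t \to 0$, swamping any negative power of $t$.

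The main obstacle I expect is a bookkeeping one: carefully controlling the $G_\alpha$-averaged kernel in the orbifold setting so that the off-diagonal orbit terms ($\gamma \neq e$) are handled uniformly, and ensuring the cross-chart terms coming from $[H_x, \psi_\alpha]$ really do enjoy the claimed support separation after lifting to $\widetilde{U}_\alpha$. Modulo these verifications, which are cosmetic extensions of the manifold argument in \cite{Dry} using that $G_\alpha$ acts by isometries and hence commutes with $H$, the three parametrix conditions follow directly from (\ref{formalSolutionOfHeatEquation}), the exponential decay, and the nested cutoff construction.
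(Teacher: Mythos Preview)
The paper does not supply its own proof here; it simply cites \cite[page 13]{Dry} and moves on. Your argument is precisely the standard one from that reference, adapted to the orbibundle setting: split $(\partial_t + H_x)H^{(m)}$ into the main term controlled by Lemma~\ref{localInvariants} (giving the $t^{m-n/2}$ factor that forces $m>n/2$) and the commutator term $[H_x,\psi_\alpha]$ controlled by the nesting $\overline{W_\alpha}\subset V_\alpha$, then handle the $G_\alpha$-average. So your approach matches the intended one.

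One small point worth tightening in your write-up: the claim that ``the off-diagonal contributions from $\gamma\neq e$ vanish uniformly because $d(\tilde x,\gamma\tilde y)$ is bounded below'' is not literally true when $\tilde x$ is a fixed point of $\gamma$, since then $\gamma\tilde y$ can equal $\tilde x$. The correct way to verify the initial condition is by a change of variables $\tilde y\mapsto \gamma^{-1}\tilde y$ in each summand (using that $G_\alpha$ acts by isometries and $\tilde f$ is $G_\alpha$-invariant), which shows that every $\gamma$-term contributes the same $\delta_{\tilde x}$, and the normalization by $|G_\alpha|$ in the quotient measure gives exactly $\delta_x$. This is the ``bookkeeping'' you anticipated; once stated this way the argument is complete.
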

From this point, the construction of the heat kernel from the parametrix $H^{(m)}$ is carried out as in \cite[page 210]{Ber}. 
The uniqueness of the heat kernel follows from \cite[Theorem 3.3]{Donnelly}.

\subsection{Computation of the heat asymptotics.}

\begin{Lemma}\label{asympth}

Let $\Ob$ be a Riemannian orbifold, $H$ a generalized Laplacian. Then
\begin{equation}\label{HOHOHO}
 \int_\Ob \tr K(t,x,x) d\vol_\Ob (x) \sim I_0 + \sum_{N \in S(\Ob)} \frac{I_N}{|Iso(N)|}, \quad t \to 0,
\end{equation}
where $S(\Ob)$ is the set of all singular strata and $|Iso(N)|$ is the order of  isotropy at each $p \in N$. There exist $a_k, a_k^N \in \R$ such that
\begin{equation*}
\begin{gathered}
 I_0 \sim t^{-\dim(\Ob)/2} \sum_{k=0}^\infty a_k t^k, \quad t\to 0,\\
 I_N \sim t^{-\dim(N)/2} \sum_{k=0}^\infty a^N_k t^k, \quad t \to 0.
\end{gathered}
\end{equation*}
\end{Lemma}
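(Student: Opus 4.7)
The plan is to reduce to a local computation in orbifold charts, then lift the heat kernel to the covering manifold and extract asymptotics by a Laplace-type argument concentrated on the fixed point sets of each isotropy element. Using the partition of unity $\{\eta_\alpha\}$ built during the parametrix construction together with the cut-offs $\psi_\alpha$, I would write
\[
\int_\Ob \tr K(t,x,x)\,d\vol_\Ob(x) = \sum_\alpha \int_{U_\alpha} \eta_\alpha(x)\,\tr K(t,x,x)\,d\vol_\Ob(x),
\]
and express each local integral on the cover: since $K$ on $U_\alpha$ lifts to $\widetilde K(t,\tilde x,\tilde y) = \sum_{\gamma\in G_\alpha} \widetilde K_0(t,\tilde x,\gamma\tilde y)$ for the true heat kernel $\widetilde K_0$ on $\widetilde U_\alpha$, one obtains
\[
\int_{U_\alpha} \eta_\alpha\,\tr K(t,x,x)\,d\vol_\Ob = \frac{1}{|G_\alpha|}\sum_{\gamma\in G_\alpha} \int_{\widetilde U_\alpha} \widetilde\eta_\alpha(\tilde x)\,\tr \widetilde K_0(t,\tilde x,\gamma\tilde x)\,d\vol(\tilde x),
\]
where $\widetilde\eta_\alpha$ is the $G_\alpha$-invariant lift of $\eta_\alpha$. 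This separates the contribution by isotropy element $\gamma$.

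The $\gamma = e$ terms assemble, using the standard manifold heat asymptotics on $\widetilde U_\alpha$ obtained from Lemma \ref{localInvariants} (integrating $(4\pi t)^{-n/2}\sum u_i(\tilde x,\tilde x) t^i$ along the diagonal), into a global expansion
\[
I_0 \sim t^{-\dim(\Ob)/2}\sum_{k\geq 0} a_k t^k,\qquad t\to 0,
\]
with coefficients $a_k$ given by universal polynomials in the curvature of $\Ob$. For $\gamma\neq e$, the Gaussian factor $e^{-d(\tilde x,\gamma\tilde x)^2/4t}$ forces concentration on the fixed set $\mathrm{Fix}(\gamma)\subset \widetilde U_\alpha$ as $t\to 0$. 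Away from $\mathrm{Fix}(\gamma)$ the contribution is exponentially small by a standard off-diagonal estimate for the parametrix, and the difference between the parametrix and the true heat kernel is uniformly $O(t^N)$ for arbitrary $N$, so only a tubular neighborhood of $\mathrm{Fix}(\gamma)$ matters.

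On such a neighborhood I would introduce Fermi coordinates $(y,z)$ with $y$ along $\mathrm{Fix}(\gamma)$ and $z$ normal to it. The key geometric fact is that $d(\tilde x,\gamma\tilde x)^2$ is a smooth function vanishing exactly to second order on $\mathrm{Fix}(\gamma)$, with nondegenerate Hessian in $z$ equal to a quadratic form determined by $(\mathrm{Id}-d\gamma)$ acting on the normal bundle. Expanding $\sum u_i(\tilde x,\gamma\tilde x)t^i$ in Taylor series in $z$ and performing the resulting Gaussian integrals in $z$ produces a factor $t^{\mathrm{codim}(\mathrm{Fix}(\gamma))/2}$, leaving an integral over $\mathrm{Fix}(\gamma)$ with an expansion in integer powers of $t$. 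Grouping fixed sets of different $\gamma$'s that project to the same stratum $N\in S(\Ob)$ and accounting for the factor $1/|G_\alpha|$ against the number of such $\gamma$'s converts these contributions into $I_N/|Iso(N)|$, with
\[
I_N \sim t^{-\dim(N)/2}\sum_{k\geq 0} a_k^N t^k.
\]

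The main obstacle is the localization/Laplace step: controlling the off-diagonal behaviour of the true heat kernel versus the parametrix uniformly in $t$, and justifying that the stationary phase-type expansion on $\mathrm{Fix}(\gamma)$ (with its nondegenerate normal Hessian of the squared distance) yields only integer powers of $t$ after combining the Gaussian integration with the $t^{-n/2}$ prefactor. A secondary bookkeeping issue is the reorganization of the chartwise sums into a stratum-indexed sum independent of the atlas, which requires checking that contributions from overlapping charts, weighted by $\eta_\alpha$, glue to a global integral over each stratum and that the isotropy factor $|Iso(N)|$ emerges correctly from the orbits of $G_\alpha$ on the local fixed sets.
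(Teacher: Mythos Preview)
Your proposal is correct and follows essentially the same route as the paper: localize, lift to the covering chart, split the averaged kernel into the identity contribution (giving $I_0$ via the standard on-diagonal expansion) and the $\gamma\neq e$ contributions concentrated on $\mathrm{Fix}(\gamma)$ (giving the $I_N$'s). The only organizational difference is that the paper first treats the global quotient case by quoting Gilkey's equivariant heat-trace theorems as black boxes and then defers the general, chartwise argument to \cite{Dry}, whereas you sketch directly the partition-of-unity localization and the normal Gaussian/Laplace computation that underlie those references.
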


\begin{proof}[Proof of Lemma \ref{asympth} for global quotient orbifolds]

Let $E' \to M$ be a vector bundle over a compact manifold $M$ with a fiber $V$. Let $\Gamma$ be a finite group of isometries of $M$ and $\rho: \Gamma \to \End(V)$ its representation. Consider an orbibundle $E \to \Ob$, where $E = E' / \Gamma$ and $\Ob = M / \Gamma$. Let $K$ be the heat kernel of $H$
and let $\pi: M \to \Ob$ be the projection.
$$ K^\Ob (t,x,y) = \sum_{\gamma \in \Gamma} \rho^{-1}(\gamma) \cdot K(t, \tilde{x},\gamma \tilde{y}),$$
where $\tilde{x}$ and $\tilde{y}$ are  elements of $\pi^{-1}(x)$ and $\pi^{-1}(y)$, respectively. Then

\begin{equation}\label{OrbiKernel}
\int_\Ob K^\Ob(t,x,x) d \vol_\Ob (x) = \frac{1}{|\Gamma|} \int_M K(t, \tilde{x}, \tilde{x})  d \vol_M (x) + \frac{1}{|\Gamma|}   
\sum_{e \neq \gamma \in \Gamma} \int_M \rho^{-1}(\gamma)  K(t, \tilde{x}, \gamma(\tilde{x})) d \vol_M (x).
\end{equation}
 
We study the asymptotic behavior of (\ref{OrbiKernel}) following \cite{Gil} and modifying the proofs according to the functional analysis from Section \ref{sec:!fa}. The first summand in the right rand side of  (\ref{OrbiKernel}) is treated analogously to \cite[Theorem 1.7.6]{Gil}:
\begin{Th}\label{riiigel}
There exist invariants $a_n(x)$ such that 
 $$\int_M K(t, x, x) dx \sim   \sum_{k=0}^\infty t^{(k-\dim(M))/2} \int_M a^k(x) d\vol_M (x) , \quad t\to 0.$$
\end{Th}
\begin{remark}\label{hase}
The leading coefficient is given by $a_0(x) = (4 \pi)^{-\dim(M)/2}$.
\end{remark}
The second summand of the right hand side of (\ref{OrbiKernel}) is treated analogously to \cite[Lemma 1.8.2]{Gil}:
\begin{Th}\label{drooogel}
Let $\bigcup_i N_i \subset M$ be the fixed point set of $\gamma$ with $m_i = \dim N_i$. There exist invariants $a_n(x)$ which depend functorially on $\gamma$ and a finite number of jets of the symbol of $H$. The invariants $a^{N_i}_i(x)$, $x \in N_i$ and

$$\int_M \rho(\gamma)^{-1} \cdot K(t, x, \gamma x ) d \vol_M(x) \sim \sum_i \sum_{n=0}^\infty t^{(n-m_i)/2} \int_{N_i} a^{N_i}_n(x) d\vol_i (x),$$
where $d\vol_i(x)$ denotes the Riemannian measure on $N_i$.
\end{Th}
\begin{remark}
The invariants $a_n(x)$ and $a^{N_i}_n(x)$ vanish for $n$ odd. A similar result can be found in  \cite[p.438-440]{LR}.
\end{remark}
Applying Theorems \ref{riiigel} and \ref{drooogel} finishes the proof of Lemma \ref{asympth}.
\end{proof}
The proof of Lemma \ref{asympth} for orbifolds that are not global quotient follows  \cite{Dry} with minor modifications using Theorems \ref{riiigel} and \ref{drooogel}, and we omit it. Now we establish the Weyl law. For this we prove:

\begin{Lemma}
Denote by $\lambda_i$, $i=1,2,\ldots$ the eigenvalues of $H$. Then
\begin{equation}\label{HIHIHI}
\sum_i e^{-t \lambda_i} \sim I_0 + \sum_{N \in \Ob} \frac{I_N}{|Iso(N)|},
\end{equation}
where $I_0$, $I_N$ are from Lemma \ref{asympth}.
\end{Lemma}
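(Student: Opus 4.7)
The plan is to identify $\sum_i e^{-t\lambda_i}$ with the trace of the heat operator $e^{-tH}$, compute this trace as an integral of the heat kernel along the diagonal, and then apply Lemma~\ref{asympth}. The subtlety is that $H$ need not be self-adjoint, so one has to invoke the generalized spectral decomposition from Section~\ref{sec:!fa} together with Lidskii's theorem rather than reasoning with an orthonormal eigenbasis.

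First, I would verify that $e^{-tH}$ is a bounded integral operator on $L^2(\Ob, E)$ with smooth kernel $K(t,\cdot,\cdot)$; this is the content of the construction carried out in the previous subsection. Since $\Ob$ is compact, $K(t,\cdot,\cdot)$ is smooth, so Lemma~\ref{proposition_about_smoothing_operators} applies and yields
\begin{equation*}
\Tr e^{-tH} = \int_\Ob \tr K(t,x,x)\, d\mu(x).
\end{equation*}
Next, I would use the generalized spectral decomposition (\ref{spectralDecomposition}) of $L^2(\Ob, E)$ as the closure of $\bigoplus_k V_k$, where $H|_{V_k}$ has a single eigenvalue $\lambda_k$ with $(H-\lambda_k I)^{N_k}|_{V_k}=0$. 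Since $H-\lambda_k I$ is nilpotent on $V_k$, one has $e^{-tH}|_{V_k} = e^{-t\lambda_k}\cdot e^{-t(H-\lambda_k I)}|_{V_k}$, which is a finite sum, and the only eigenvalue of $e^{-tH}|_{V_k}$ is $e^{-t\lambda_k}$ with algebraic multiplicity $m(\lambda_k) = \dim V_k$. Applying Lidskii's theorem \cite[Theorem 8.4]{GK} to the trace-class operator $e^{-tH}$ gives
\begin{equation*}
\Tr e^{-tH} = \sum_k m(\lambda_k)\, e^{-t \lambda_k}.
\end{equation*}
(Under the convention that each $\lambda_i$ is repeated according to multiplicity, this is exactly $\sum_i e^{-t\lambda_i}$.)

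Combining the two expressions for $\Tr e^{-tH}$ yields
\begin{equation*}
\sum_i e^{-t\lambda_i} = \int_\Ob \tr K(t,x,x)\, d\mu(x),
\end{equation*}
and then Lemma~\ref{asympth} immediately gives the asymptotic expansion (\ref{HIHIHI}) with $I_0$ and the $I_N$ as described there.

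The only step requiring mild care is the justification of the trace identity in the non-self-adjoint setting: one must be sure that the heat operator genuinely preserves the $V_k$ and that its action there has the claimed single eigenvalue structure, but this is a direct consequence of the fact that $e^{-tH}$ is constructed from $H$ and commutes with it, together with the nilpotence of $H-\lambda_k I$ on each $V_k$ guaranteed by (\ref{spectralDecomposition}). Everything else is a direct application of previously established results.
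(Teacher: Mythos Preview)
Your proposal is correct and follows essentially the same route as the paper: establish that $e^{-tH}$ is trace class with trace equal to $\int_\Ob \tr K(t,x,x)\,d\mu(x)$, identify its eigenvalues as $e^{-t\lambda_i}$ via the generalized spectral decomposition and Lidskii's theorem, and then invoke Lemma~\ref{asympth}. The only cosmetic difference is that the paper obtains the trace-class property by writing $e^{-tH}=e^{-tH/2}\cdot e^{-tH/2}$ as a product of Hilbert--Schmidt operators, whereas you appeal directly to Lemma~\ref{proposition_about_smoothing_operators}; and the paper outsources the eigenvalue identification $\alpha_i=e^{-t\lambda_i}$ to \cite[p.~13]{Mu} rather than spelling out the nilpotence argument as you do.
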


\begin{proof}
The operator $e^{-t H}$ is Hilbert-Shmidt, because it has the square-integrable kernel $K$ and $\Ob$ is compact. 
Hence, $e^{-t H}$ is compact, and its spectrum consists of countably many eigenvalues eigenvalues $\al_i$, $i\in\N$.
As $e^{-tH} = e^{-tH/2}\cdot e^{-tH/2}$, $e^{-tH}$ is also of trace class. Hence, by Lidskii's theorem
\begin{equation}\label{eq1212}
\sum_i \al_i = \int_\Ob \Tr K(t,x,x) \, d \vol_\Ob(x).
\end{equation}
The eigenvalues $\al_i$ of $e^{-tH}$ and their algebraic multiplicities can be determined as in \cite[p.~13]{Mu}:  
\begin{equation}\label{eq1313}
\al_i = e^{-t \lambda_i} 
\end{equation}
Substituting (\ref{eq1313}) and (\ref{eq1212}) into (\ref{HOHOHO}), we obtain (\ref{HIHIHI}).
\end{proof}
Using Tauberian theorem \cite[Chapt. II, §14]{Shu} and Remark \ref{hase} we obtain:
\begin{Th}[The Weyl law]\label{darthveider}
Let $N(r) = \#\{\lambda \in \spec(H), |\lambda|\le r\}$ be the counting function for the spectrum of~$H$. Then
$$N(r) = \frac{\rk (E) \vol(\Ob)}{(4 \pi)^{n/2} \Gamma(n/2+1)} r^{n/2} + o(r^{n/2}), \quad r \to \infty.$$
\end{Th}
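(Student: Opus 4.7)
The plan is to deduce the Weyl law directly from the heat trace asymptotic (\ref{HIHIHI}) by extracting the leading coefficient and applying a standard Tauberian argument. Since the proofs leading up to Theorem \ref{darthveider} have already set up all the heavy machinery (existence of the heat kernel, trace formula, small-time asymptotics for the regular and singular strata), the remaining work is organizational.

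First I would isolate the leading order of the small-time heat expansion. By Lemma \ref{asympth} together with (\ref{HIHIHI}),
$$ \sum_i e^{-t\lambda_i} \sim I_0 + \sum_{N \in S(\Ob)} \frac{I_N}{|Iso(N)|} \quad \text{as } t \to 0. $$
The regular part $I_0$ has leading order $t^{-\dim(\Ob)/2} = t^{-n/2}$, while each singular stratum contributes $t^{-\dim(N)/2}$ with $\dim(N) < n$; these are therefore all of strictly lower order and absorbed into the $o(\cdot)$ term. The coefficient of $t^{-n/2}$ in $I_0$ is, by Remark \ref{hase} and Theorem \ref{riiigel}, the integral of $\tr a_0(x) = (4\pi)^{-n/2}\,\Id_E$ over $\Ob$, which equals $(4\pi)^{-n/2}\rk(E)\vol(\Ob)$. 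Hence
$$ \sum_i e^{-t\lambda_i} = \frac{\rk(E)\vol(\Ob)}{(4\pi)^{n/2}}\, t^{-n/2} + o(t^{-n/2}), \quad t \to 0^+. $$

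Next I would invoke Karamata's Tauberian theorem (in the form quoted from \cite[Chapt. II, \S 14]{Shu}). Viewing the left-hand side as $\int_0^\infty e^{-t\lambda}\,d\nu(\lambda)$ for the spectral counting measure $\nu = \sum_i \delta_{\lambda_i}$, the asymptotic above together with the Tauberian theorem yields
$$ N(r) = \nu([0,r]) \sim \frac{1}{\Gamma(n/2+1)} \cdot \frac{\rk(E)\vol(\Ob)}{(4\pi)^{n/2}}\, r^{n/2}, \quad r \to \infty, $$
which is exactly the claimed formula.

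The only subtle point, and the place where the most care is needed, is verifying that the Tauberian hypothesis is indeed met: one needs $\nu$ to be a positive measure on $[0,\infty)$, so the eigenvalues $\lambda_i$ must be real and non-negative. In this statement $H$ is a genuine generalized Laplacian (hence essentially self-adjoint and non-negative on $L^2(\Ob,E)$ by the argument in Lemma \ref{proposition_about_smoothing_operators}), so $\spec(H) \subset [0,\infty)$ and $|\lambda|=\lambda$; the counting function $N(r) = \#\{\lambda \in \spec(H) : |\lambda| \le r\}$ coincides with $\nu([0,r])$ and Karamata applies without modification. No other step presents real difficulty, since the leading coefficient extraction is routine once the lower-dimensional strata contributions have been identified as lower order.
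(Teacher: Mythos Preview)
Your approach is exactly the paper's: extract the leading coefficient of the small-time heat expansion from Remark \ref{hase}, observe that each singular stratum contributes at order $t^{-\dim N/2}$ with $\dim N<n$ and is therefore absorbed into the remainder, and then invoke the Tauberian theorem from \cite[Ch.~II, \S14]{Shu}. The paper's own proof is the single sentence ``Using Tauberian theorem \ldots\ and Remark \ref{hase} we obtain'', so you have simply unpacked what is there.

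One caveat about your final paragraph. You write that ``$H$ is a genuine generalized Laplacian (hence essentially self-adjoint and non-negative \ldots)'', citing Lemma \ref{proposition_about_smoothing_operators}. That lemma, however, is about the \emph{Bochner--Laplace} operator $\Delta_E=(\nabla^E)^*\nabla^E$, which is self-adjoint by construction; the generalized Laplacian $H$ of Section \ref{seq2} is not assumed self-adjoint, and indeed the paper later applies Theorem \ref{darthveider} to the operators $A(\sigma)$ built from twisted Laplacians $\Delta^\#_{\nu,\rho}$ with non-unitary $\rho$ (Section \ref{sec:symmsel}), whose spectra lie only in a sector $B_R(0)\cup\Lambda_{[-\varepsilon,\varepsilon]}$ by Theorem \ref{spectrBehaviour}. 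The paper is equally terse on this point, but your explicit claim that self-adjointness is automatic is not correct. If you want the Tauberian step to be airtight you should either restrict the statement to self-adjoint $H$, or argue by comparison with $\Delta_E$: the lower-order perturbation $H-\Delta_E$ and the sectorial estimate of Theorem \ref{spectrBehaviour} force $N(r)$ to agree with the counting function of $\Delta_E$ to leading order, after which Karamata applies to the latter.
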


\section{The Selberg trace formula}\label{nyash}

In this section we give a description of the kernel $K_\varphi$ of the smoothing operator $\varphi(P^{1/2})$ in terms of the solution of the wave equation. For technical reasons we impose some restrictions on the orbifold~$\Ob$, namely assume $\Ob = \Gamma \bs M$, where $\Gamma$ is as in the following Lemma:
\begin{Lemma}[Selberg Lemma]\label{seelem}
A finitely generated group $\Gamma$ of matrices over a field of characteristic zero has a normal torsion free subgroup $\Gamma_0$ of finite index.
\end{Lemma}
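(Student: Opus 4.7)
The plan is to deduce the lemma from the classical reduction-modulo-a-maximal-ideal argument. Since $\Gamma$ is finitely generated, pick generators $\gamma_1, \ldots, \gamma_N \in GL_n(K)$ and let $R \subset K$ be the subring generated over $\Z$ by all matrix entries of the $\gamma_i$ together with $\det(\gamma_i)^{-1}$. Then $R$ is a finitely generated $\Z$-algebra of characteristic zero, and $\Gamma$ embeds in $GL_n(R)$.

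Next, I would exhibit a maximal ideal $\mathfrak{m} \subset R$ whose residue field is finite. This follows from the general fact (a form of the Nullstellensatz for $\Z$-algebras) that every residue field of a finitely generated $\Z$-algebra at a maximal ideal is finite: indeed, $R/\mathfrak{m}$ is a field that is finitely generated as a ring over $\Z$, hence over its prime subfield $\mathbb{F}_p$ or $\Q$; the latter is excluded because a finitely generated field extension of $\Q$ cannot itself be finitely generated as a ring. Moreover, the residue characteristic $p$ can be chosen arbitrarily large, because only finitely many primes are excluded at each step of any given specialization of the transcendence basis furnished by Noether normalization.

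Consider then the reduction map $\pi: GL_n(R) \to GL_n(R/\mathfrak{m})$ and set $\Gamma_0 := \Gamma \cap \ker \pi$. Since $GL_n(R/\mathfrak{m})$ is finite, $\Gamma_0$ is a normal subgroup of $\Gamma$ of finite index. It remains to verify that $\Gamma_0$ is torsion-free. Suppose $\gamma \in \Gamma_0$ satisfies $\gamma^m = I$ for some smallest $m \geq 2$. Write $\gamma = I + A$ with the entries of $A$ in $\mathfrak{m}$; then any prime $\ell$ dividing $m$ yields a nontrivial element $\gamma^{m/\ell}$ of order $\ell$ in the kernel. Expanding $(I+A)^\ell = I$ and reducing in the $\mathfrak{m}$-adic completion $\widehat{R}_{\mathfrak{m}}$ one obtains a relation forcing $A \in \mathfrak{m}^k$ for all $k$, provided the residue characteristic $p$ was chosen with $p > $ the possible orders, in particular $p > \ell$; by the Krull intersection theorem this gives $A = 0$, contradicting $\gamma \neq I$.

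The main obstacle is the commutative-algebra input, namely producing $\mathfrak{m}$ with finite residue field of arbitrarily large characteristic; the torsion-freeness then follows by a routine $p$-adic expansion argument. Since the statement is classical, an alternative is simply to cite it (for instance from Alperin or Ratcliffe), noting that the standard proofs run through exactly the steps above.
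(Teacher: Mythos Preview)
The paper does not prove this lemma at all; it is stated as the classical Selberg Lemma and used as a black box. Your proposal, by contrast, sketches the standard reduction-modulo-$\mathfrak m$ proof, and at the end you correctly note that simply citing the result (Alperin, Ratcliffe, Wehrfritz, etc.) is the practical route --- which is exactly what the paper does.

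Your sketch is essentially correct, but one step deserves more care. You write ``provided the residue characteristic $p$ was chosen with $p > $ the possible orders.'' For this to make sense you need an \emph{a priori} bound on the orders of torsion elements in $\Gamma$, and you do not justify one. Such a bound does exist: if $K$ is a finitely generated field of characteristic zero and $\gamma\in GL_n(K)$ has order $m$, then a primitive $m$-th root of unity appears among the eigenvalues of $\gamma$, so $[K(\zeta_m):K]\le n$, and since the algebraic closure of $\Q$ in $K$ is a number field this forces $\phi(m)$ to be bounded. Without this (or an equivalent device) the $p$-adic expansion argument only shows that the congruence kernel has no $\ell$-torsion for $\ell\neq p$; it can genuinely have $p$-torsion (e.g.\ $-I$ in the kernel of $GL_n(\Z)\to GL_n(\Z/2\Z)$, or $\zeta_p I$ in the kernel of $GL_n(\Z[\zeta_p])\to GL_n(\Z[\zeta_p]/(1-\zeta_p))$). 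A cleaner alternative that sidesteps the torsion-bound lemma is to take two maximal ideals $\mathfrak m_1,\mathfrak m_2$ with distinct residue characteristics and intersect the two congruence kernels.
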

It follows immediately that $\Gamma_0 \bs M$ is a manifold. Assume additionally that $M$ is a universal covering for $\Gamma_0 \bs M$. Consider the wave equation on the orbibundle $E \to \Ob$ associated to a representation~$\rho$ of~$\Gamma$:
\begin{equation}\label{waveEquation}
\partial^2 (u)/\partial t^2 + H u = 0, \quad u(0,x) = f(x), \quad u_t(0, x) = 0
\end{equation}
for $u(t;f) \in C^\infty (\R \times \Ob, E)$.
\begin{Lemma}\cite[Proposition 3.1]{Mu} \label{prop1}
For each $f \in C^\infty(\Ob, E)$ there is a unique solution $u(t;f) \in C^\infty (\R \times \Ob, E)$ of the wave equation (\ref{waveEquation}). 
Moreover for every $T>0$ and $s \in \R$ there exists $C > 0$ such that for every $f \in C^\infty(\Ob, E)$
\begin{equation}\label{Sobeq}
|| u(t;\cdot ) ||_s \leqslant C || f||_s, \quad |t|\leqslant T
\end{equation}
\end{Lemma}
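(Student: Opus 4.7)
My plan is to reduce the problem to the manifold case by invoking the Selberg Lemma, lifting the wave equation to a finite torsion-free cover, and then descending the solution by equivariance.

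Fix a torsion-free normal subgroup $\Gamma_0 \lhd \Gamma$ of finite index, as provided by Lemma \ref{seelem}, and set $M_0 := \Gamma_0 \bs M$ and $F := \Gamma/\Gamma_0$. Then $M_0$ is a smooth compact manifold, the finite group $F$ acts isometrically on $M_0$, and $\Ob = F \bs M_0$. The orbibundle $E \to \Ob$ pulls back to an $F$-equivariant Hermitian vector bundle $E_0 \to M_0$, and smooth sections of $E \to \Ob$ are in bijection with $F$-invariant smooth sections of $E_0 \to M_0$; the generalized Laplacian $H$ lifts to an $F$-equivariant operator $H_0$ on $E_0$ of the same type, namely a perturbation of a Bochner Laplacian by a first-order differential operator. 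By the local construction of the orbifold Sobolev norm in Section \ref{sectionPDE}, the norm $\|\cdot\|_s$ on $C^\infty(\Ob, E)$ and the restriction of the Sobolev norm of $C^\infty(M_0, E_0)$ to $F$-invariant sections are equivalent, with constants depending only on $|F|$.

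Given $f \in C^\infty(\Ob, E)$, let $\tilde{f} \in C^\infty(M_0, E_0)^F$ be its lift. Applying \cite[Proposition~3.1]{Mu} to $(M_0, E_0, H_0)$ produces a unique solution $\tilde{u}(t;\tilde{f}) \in C^\infty(\R \times M_0, E_0)$ of the corresponding wave equation with the estimate
$$\|\tilde{u}(t;\cdot)\|_s \leqslant C \|\tilde{f}\|_s, \quad |t| \leqslant T.$$
Because $H_0$ commutes with the $F$-action and the initial data are $F$-invariant, for each $\gamma \in F$ the section $\gamma \cdot \tilde{u}(t;\tilde{f})$ solves the same Cauchy problem, so uniqueness forces $\tilde{u}$ to be $F$-invariant. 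Hence $\tilde{u}$ descends to a section $u(t;f) \in C^\infty(\R \times \Ob, E)$ that solves (\ref{waveEquation}). Uniqueness on $\Ob$ follows by lifting any candidate solution to an $F$-invariant solution on $M_0$ and invoking uniqueness there, and the Sobolev bound (\ref{Sobeq}) is transferred from the lifted estimate through the norm equivalence.

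The only genuinely non-trivial point to check is that $H_0$ falls within the class of operators to which \cite[Proposition~3.1]{Mu} applies, i.e.\ that it has the form $\Delta_{E_0} + D_0$ with $D_0$ a first-order differential operator. This is immediate from the local description, since both $\Delta_E$ and $D$ are given on each orbifold chart by $F$-invariant data whose lifts are, by construction, the corresponding expressions on $M_0$. Beyond this compatibility check, no genuinely new analysis is required: the orbifold content of the statement is absorbed entirely into the Selberg Lemma and the equivariant descent, while the hard analytic work is performed by Müller's manifold argument.
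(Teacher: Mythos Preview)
Your proof is correct and follows essentially the same approach as the paper: both pass to the finite manifold cover $\Gamma_0 \backslash M$ via the Selberg Lemma, apply \cite[Proposition~3.1]{Mu} there, and transfer the Sobolev estimate back to $\Ob$ via the equivalence of norms. Your write-up is in fact more careful than the paper's, which merely records the norm comparison and leaves the equivariant descent and uniqueness implicit.
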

\begin{proof}
Note that $\Gamma_0 \backslash M$ is a manifold by Lemma \ref{seelem}, hence \cite[Proposition 3.1]{Mu} is valid. Moreover, for every $f \in C^\infty(\Ob, E)$ denote by $g$ its pull-back to $C^\infty(\Gamma_0 \bs M, E)$. Then 
\begin{equation*}
\begin{gathered}
||g||_{\Gamma_0 \bs M} = \sum_{\gamma \in [\Gamma_0 : \Gamma]} ||\rho(\gamma) \cdot f||_{\Gamma \bs M},\quad \textnormal{hence}\\
[\Gamma_0 : \Gamma] \cdot \min_{\gamma \in [\Gamma_0 : \Gamma]} |\rho(\gamma)| \cdot || f||_{\Gamma \bs M} \le ||g||_{\Gamma_0 \bs M} \le [\Gamma_0 : \Gamma] \cdot \max_{\gamma \in [\Gamma_0 : \Gamma]} |\rho(\gamma)| \cdot || f||_{\Gamma \bs M},
\end{gathered}
\end{equation*}
hence (\ref{Sobeq}) follows.
\end{proof}

\begin{Lemma}\cite[Proposition 3.2]{Mu}\label{Prop3.2}
Let $\varphi \in \mathcal{P}(\C)$ and 
 $\widehat{\phi}$ be the Fourier transform of $\varphi|_\R$. Then for every $f \in C^\infty(\Ob, E)$ we have
\begin{equation}
\varphi(H^{1/2}) f = \frac{1}{\sqrt{2\pi}} \int_\R \widehat{\phi}(t) u(t;f) dt.
\end{equation}
\end{Lemma}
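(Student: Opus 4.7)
The plan is to reduce the identity to an explicit action on each finite-dimensional generalized eigenspace $V_k$ from the decomposition (\ref{spectralDecomposition}), which mirrors \cite[Proposition 3.2]{Mu} whose argument carries over once Lemmas \ref{lemma123456} and \ref{prop1} are available.

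First I would verify that the right-hand side makes sense. Since $\varphi \in \mathcal{P}^R(\C)$ for some $R>0$, by the Paley-Wiener theorem $\widehat{\phi}$ is smooth and compactly supported in $[-R,R]$. Combined with the Sobolev bound (\ref{Sobeq}) of Lemma \ref{prop1}, the integral
$$\Phi f := \frac{1}{\sqrt{2\pi}} \int_\R \widehat{\phi}(t)\, u(t;f)\,dt$$
converges in every Sobolev norm, defines a continuous operator on $C^\infty(\Ob, E)$, and extends to a bounded operator on $L^2(\Ob, E)$. Uniqueness for the wave equation yields $u(t;Hf) = Hu(t;f)$, so $\Phi$ commutes with $H$; the contour-integral definition of $\varphi(H^{1/2})$ shows the same for $\varphi(H^{1/2})$. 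Both operators therefore preserve the decomposition (\ref{spectralDecomposition}), and it suffices to prove the identity on each finite-dimensional $V_k$.

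On $V_k$ one has $H|_{V_k} = \lambda_k \Id + N_k$ with $N_k$ nilpotent, so $H^{1/2}|_{V_k} = \lambda_k^{1/2}\Id + \widetilde{N}_k$ with $\widetilde{N}_k$ nilpotent of some index $d_k$. For $f \in V_k$ the wave equation reduces to an ODE on the finite-dimensional space $V_k$, whose unique solution is $u(t;f) = \cos(tH^{1/2})f$ where the cosine is defined by its (now finite) Taylor expansion around $\lambda_k^{1/2}\Id$:
$$\cos(tH^{1/2})|_{V_k} = \sum_{j=0}^{d_k-1} \frac{t^j \cos^{(j)}(t\lambda_k^{1/2})}{j!}\, \widetilde{N}_k^j.$$
Analogously, evaluating the Dunford integral that defines $\varphi(H^{1/2})$ against the Jordan structure on $V_k$ gives
$$\varphi(H^{1/2})|_{V_k} = \sum_{j=0}^{d_k-1} \frac{\varphi^{(j)}(\lambda_k^{1/2})}{j!}\, \widetilde{N}_k^j.$$

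The last step is the Fourier inversion identity $\int_\R \widehat{\phi}(t)\cos(t\mu)\,dt = \sqrt{2\pi}\,\varphi(\mu)$, which holds because $\varphi$ is even (this evenness is implicit in the very definition of $\varphi(H^{1/2})$). Differentiating $j$ times in $\mu$ under the integral sign, which is legitimate thanks to the compact support of $\widehat{\phi}$, gives $\int_\R \widehat{\phi}(t)\, t^{j}\cos^{(j)}(t\mu)\,dt = \sqrt{2\pi}\,\varphi^{(j)}(\mu)$ and matches the two Jordan expansions above term by term, yielding $\Phi|_{V_k} = \varphi(H^{1/2})|_{V_k}$. The main obstacle I foresee is the careful bookkeeping of the Jordan blocks $\widetilde{N}_k^j$: in the self-adjoint case each $V_k$ is a genuine eigenspace and the identity is transparent from the scalar Fourier inversion, whereas here one must verify that the contour integral defining $\varphi(H^{1/2})$ produces exactly the Taylor-Dunford expansion that arises on the wave-equation side.
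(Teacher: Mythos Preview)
Your argument is correct and is precisely the one in \cite[Proposition 3.2]{Mu}, which the paper invokes without reproducing: reduce to each finite-dimensional generalized eigenspace $V_k$ via the decomposition (\ref{spectralDecomposition}), compute both sides as finite Taylor--Dunford expansions in the nilpotent part of $H^{1/2}|_{V_k}$, and match them term by term using Fourier inversion for the even function $\varphi$. The paper gives no independent proof here but simply cites \cite{Mu}, so there is nothing further to compare.
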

Let $d(x,y)$ denote the geodesic distance of $x,y \in M$. For $\delta > 0$ define $U_\delta := \{ (x,y)\in M \times M: d(x,y) < \delta\}$.

\begin{Lemma} \cite[Proposition 3.3]{Mu} \label{Prop3.3}
There exists $\delta > 0$ and
 $H_\phi \in C^\infty(M \times M, Hom(\tilde{E},\tilde{E}))$ with 
 $\supp H_\phi \subset U_\delta$,
such that for all $\psi \in C^\infty (M,\tilde{E}) $ we have

\begin{equation*}
\frac{1}{\sqrt{2\pi}} \int_\R \widehat{\phi}(t) w(t,\tilde{x},\psi) dt 
 = \int_{M} H_\varphi(\tilde{x},\tilde{y})
  (\psi(\tilde{y})) d \tilde{y}.
\end{equation*}
\end{Lemma}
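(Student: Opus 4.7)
The plan is to realise the operator $T_\phi \psi(\tilde x) := \frac{1}{\sqrt{2\pi}} \int_\R \widehat\phi(t)\, w(t, \tilde x, \psi)\, dt$ as an integral operator on $M$ whose Schwartz kernel $H_\phi$ inherits compact support near the diagonal from finite propagation speed of the wave equation, and whose smoothness is inherited from the smooth kernel of $\phi(H^{1/2})$ on $\Ob$ by localisation. Since $\phi \in \mathcal{P}^R(\C)$ for some $R > 0$, the Paley--Wiener theorem gives $\widehat\phi \in C_c^\infty((-R,R))$. Because $\Gamma_0 \bs M$ is a manifold by Lemma \ref{seelem} and $M$ is its universal cover, the lifted wave equation admits unique smooth solutions $w(t, \tilde x, \psi)$ on $M$ (apply Lemma \ref{prop1} on $\Gamma_0 \bs M$ and lift to $M$). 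Standard energy estimates on the complete Riemannian manifold $M$ give finite propagation speed: $w(t, \tilde x, \psi)$ depends only on $\psi|_{\overline{B_{|t|}(\tilde x)}}$. Combined with $\supp \widehat\phi \subset [-R, R]$, this forces the Schwartz kernel $H_\phi$ of $T_\phi$ (which exists by the Schwartz kernel theorem) to be supported in $U_\delta$ for any $\delta > R$.

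For smoothness I would compare $T_\phi$ with $\phi(H^{1/2})$ downstairs. By Lemma~2.4 of \cite{Mu}, together with the discussion preceding \eqref{someTraceEquation}, the operator $\phi(H^{1/2})$ on $\Ob$ has a smooth kernel $K_\phi$. By $\Gamma$-equivariance of the lifted Laplacian and uniqueness of the wave propagator, the two kernels are related by the locally finite identity
$$K_\phi(x, y) = \sum_{\gamma \in \Gamma} \rho(\gamma)\, H_\phi(\tilde x, \gamma \tilde y),$$
where $\tilde x, \tilde y$ are lifts of $x, y$. Once $\delta$ is chosen small enough so that, for every $\tilde x$, at most one $\Gamma$-translate of $\overline{B_\delta(\tilde x)}$ meets $\overline{B_\delta(\tilde y)}$, this sum reduces locally to a single term, and smoothness of $H_\phi$ on $M \times M$ follows from smoothness of $K_\phi$ on $\Ob \times \Ob$.

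The main obstacle is handling the singular strata: above an orbifold singular point the local picture is not a single copy of a chart but a quotient of a ball by a non-trivial finite isotropy group, so the identification of $T_\phi$ with the lift of $\phi(H^{1/2})$ on a small ball requires extra care. To resolve this I would pick a finite atlas of orbifold charts $(\widetilde U_\alpha, G_\alpha, \pi_\alpha)$ covering $\Ob$ and choose $\delta$ small enough that for every $\alpha$ and every $\tilde x \in \widetilde U_\alpha$ no non-trivial $G_\alpha$-translate of $\overline{B_\delta(\tilde x)}$ meets the original ball; such a uniform $\delta$ exists because the atlas is finite and each $G_\alpha$ is finite. With this $\delta$ the comparison of kernels becomes an equality of single terms on each chart, and smoothness of $H_\phi$ both at regular and at singular points of the base reduces to smoothness of $K_\phi$ already established on $\Ob$.
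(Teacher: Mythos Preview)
The paper does not supply its own proof of this lemma; it simply quotes \cite[Proposition~3.3]{Mu}, where the argument takes place entirely on the cover $M$, which is a smooth complete manifold. Your finite-propagation argument for $\supp H_\phi\subset U_\delta$ with $\delta>R$ is correct and is exactly the standard route.

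Your smoothness argument, however, has a genuine gap. You first fix $\delta>R$ from the support of $\widehat\phi$, and then propose to shrink $\delta$ so that the averaging identity $K_\phi(x,y)=\sum_{\gamma\in\Gamma}\rho(\gamma)H_\phi(\tilde x,\gamma\tilde y)$ collapses to a single term. These two constraints on $\delta$ are incompatible: $R$ is determined by $\phi$ and may well exceed the injectivity radius of the quotient, so the sum need not reduce to one term and you cannot recover $H_\phi$ from $K_\phi$. Worse, the condition you impose in the last paragraph --- that no non-trivial $G_\alpha$-translate of $\overline{B_\delta(\tilde x)}$ meets the original ball --- is impossible at a fixed point of $G_\alpha$: if $g\tilde x=\tilde x$ then $g\cdot B_\delta(\tilde x)=B_\delta(\tilde x)$ for every $\delta>0$. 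So near singular strata the reduction to a single term fails for \emph{all} $\delta$, and the detour through $\Ob$ cannot yield smoothness of $H_\phi$.

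The cure, and what \cite{Mu} actually does, is to prove smoothness directly on $M$ without ever descending to the quotient. The lifted operator is $\widetilde{\Delta}_E\otimes\Id$ (cf.~\eqref{princesssweety}), the wave propagator on the complete manifold $M$ satisfies uniform Sobolev bounds as in \eqref{Sobeq}, and repeated integration by parts in $t$ against $\widehat\phi\in C_c^\infty$ (using $\partial_t^2 w=-\widetilde H w$) shows that $T_\phi$ maps $H^s_{\mathrm{comp}}(M,\tilde E)$ into $H^{s+N}_{\mathrm{loc}}(M,\tilde E)$ for every $N$. Sobolev embedding on the manifold $M$ then gives a smooth kernel. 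No comparison with $K_\phi$ is needed, and the orbifold singularities of $\Ob$ never enter.
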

From now on denote by $\tilde{f}$ the pull-back of $f \in C^\infty(\Gamma \bs M)$ to $M$. Using Lemmas \ref{Prop3.2} and \ref{Prop3.3} we obtain
\begin{equation}\label{mimimi}
\varphi(H^{1/2})f(\tilde{x}) = \int_{M} H_\varphi(\tilde{x},\tilde{y})(\tilde{f}(\tilde{y}))\, d\tilde{y}
\end{equation}
for all $f\in C^\infty(X,E)$.
Using Proposition (3.3) together with (3.15) and Proposition 3.2, we obtain
\begin{equation}\label{3.16}
\varphi(H^{1/2})f(\tilde{x}) = \int_M H_\varphi(\tilde{x}, \tilde{y})(\tilde{f}(\tilde{y})) d\tilde{y} \quad \forall f \in C^\infty(\Ob, E).
\end{equation}
Let $F \subset M$ be a fundamental domain for the action of $\Gamma$ on $M$. Then we get 
\begin{equation*}
\begin{gathered}
\int_M H_\varphi (\tilde{x}, \tilde{y}) (\tilde{f}(\tilde{y})) d\tilde{y} = \sum_{\gamma \in \Gamma}  \int_{\gamma F} H_\varphi(\tilde{x}, \tilde{y})(\tilde{f} \tilde{y}) d \tilde{y} = \\
\sum_{\gamma \in \Gamma} \int_F H_\varphi(\tilde{x}, \gamma \tilde{y}) (\tilde{f}(\gamma \tilde{y})) d \tilde{y} =\\
\int_F \left(\sum_{\gamma \in \Gamma} H_\varphi(\tilde{x}, \gamma \tilde{y}) \circ \rho(\gamma)\right)\cdot (\tilde{f}(\tilde{y})). d\tilde{y}.
\end{gathered}
\end{equation*}
Combining this expression with (\ref{3.16}), it follows that the kernel $K_\varphi$ of $\varphi(H^{1/2})$ is given by
$$ K_\varphi(x,y) = \sum_{\gamma \in \Gamma} H_\varphi(\tilde{x}, \gamma \tilde{y})\circ \rho(\gamma).$$
Together with Lemma \ref{quantifiableconnection} we obtain
\begin{Lemma}\cite[Proposition 3.4]{Mu}
Let $\varphi \in \mathcal{P}$ be even. Then we have
\begin{equation*}
\sum_{\lambda \in \spec(H)} m(\lambda) \varphi(\lambda^{1/2}) = \sum_{\gamma \in \Gamma} \int_F \tr(H_\varphi(\tilde{x}, \gamma \tilde{x}) \circ \rho(\gamma)) d\tilde{x}.
\end{equation*}
\end{Lemma}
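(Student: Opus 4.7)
My plan is to combine the two main ingredients already assembled above: the spectral identity of Lemma \ref{quantifiableconnection} and the explicit description, derived in the display just above the statement, of the kernel $K_\varphi$ as an averaging sum of $H_\varphi$ over $\Gamma$. First I would invoke Lemma \ref{quantifiableconnection} to rewrite the left-hand side as
$$\sum_{\lambda \in \spec(H)} m(\lambda)\varphi(\lambda^{1/2}) = \int_\Ob \tr K_\varphi(x,x)\, d\mu(x),$$
reducing the claim to the evaluation of this trace integral on $\Ob$.

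Next, I would unfold everything to the universal cover $M$ using that $\Ob = \Gamma \bs M$ and that $F \subset M$ is a fundamental domain for $\Gamma$. Under the identification of sections of $E \to \Ob$ with $\Gamma$-equivariant sections of $\widetilde E \to M$, the orbifold integral of a $\Gamma$-invariant function on $M$ coincides with its integral over $F$ against the lifted Riemannian measure $d\tilde x$. Substituting the formula
$$K_\varphi(x,x) = \sum_{\gamma \in \Gamma} H_\varphi(\tilde x, \gamma \tilde x)\circ \rho(\gamma)$$
from the display preceding the statement, interchanging the $\Gamma$-sum and the integral, and using that $\tr$ is a linear functional commuting with the pointwise action of $\rho(\gamma)$ yields the desired right-hand side.

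The only step requiring real justification, and hence the one I expect to be the main technical point, is the interchange of $\sum_{\gamma \in \Gamma}$ with $\int_F$. This follows immediately from the support condition $\supp H_\varphi \subset U_\delta$ provided by Lemma \ref{Prop3.3}: since $\overline F$ is compact and $\Gamma$ acts properly discontinuously on $M$, the set of $\gamma \in \Gamma$ for which there exists $\tilde x \in \overline F$ with $d(\tilde x, \gamma \tilde x) < \delta$ is finite. Hence on $F$ the sum defining $K_\varphi(\tilde x, \tilde x)$ reduces to a finite one, and Fubini applies trivially. The whole argument is therefore a formal rearrangement once Lemmas \ref{quantifiableconnection}, \ref{Prop3.2} and \ref{Prop3.3} are in place, and this completes the proof.
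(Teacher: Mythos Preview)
Your proposal is correct and follows essentially the same route as the paper: the paper derives the kernel formula $K_\varphi(x,y)=\sum_{\gamma\in\Gamma}H_\varphi(\tilde x,\gamma\tilde y)\circ\rho(\gamma)$ in the display immediately preceding the statement and then simply writes ``Together with Lemma~\ref{quantifiableconnection} we obtain'' the result. Your version is in fact slightly more detailed, since you spell out the justification for interchanging $\sum_{\gamma}$ and $\int_F$ via the compact support of $H_\varphi$ from Lemma~\ref{Prop3.3}, a point the paper leaves implicit.
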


\subsection{The twisted Bochner-Laplace operator}\label{Sec8}
In this section we follow \cite[Section 4]{Mu}  to introduce the twisted non-selfadjoint Laplacian $\Delta_{E, \rho}^\#$.
Let $\Ob$ be a global quotient orbifold $\Ob = \Gamma \bs M$. Let $E \to \Ob$ be a complex orbibundle over an orbifold $\Ob$ with covariant derivative $\nabla$. The connection Laplacian and Bochner-Laplace operators are defined as in the case for manifolds.

Recall the definition of associated vector orbibundles. Suppose that $M$ is simply connected. Let $\rho: \Gamma \to GL(V)$ be a finite-dimensional representation. Define an equivalence relation on 
$$ M \times_\rho V : \quad [h,v] \sim [gh, \rho(g^{-1})v]$$
and let $F = M \times_\rho V$. Then $F \to \Ob$ is an orbibundle over $\Ob$. Pick a flat connection $\nabla^F$ on $F$.
This is equivalent to choosing an arbitrary $G$-invariant flat connection on $\HH \times V \to \HH$.

Let $E$ be a Hermitian vector orbibundle over $\Ob$ with a Hermitian connection $\nabla^E$. We equip $E \otimes F$ with the product 
connection $\nabla^{E \otimes F}$, defined by $$\nabla_Y^{E \otimes F} = \nabla_Y^E \otimes 1 + 1 \otimes \nabla_Y^F$$
for $Y \in C^\infty(M, TM)$. Let $\Delta_{E, \rho}^\#$ be the connection Laplacian associated to  $\nabla^{E \otimes F}$.
Let $\widetilde{E}$ and $\widetilde{F}$ be the pullback to $M$ of $E$ and $F$ respectively. Then $\widetilde{F} \cong M \times V$ and
$$C^\infty(M, \widetilde{E} \times \widetilde{F}) \cong C^\infty(M, \widetilde{E}) \otimes V.$$
It follows that if we take the lift $\widetilde{\Delta}^\#_{E, \rho}$ of $\Delta^\#_{E,\rho}$ and the lift $\widetilde{\Delta}_E$ of $\Delta_E$ to $M$,
\begin{equation}\label{princesssweety}
\widetilde{\Delta}_{E, \rho}^\# = \widetilde{\Delta}_E \otimes \Id.
\end{equation} 
Then the unique solution of the equation 
$$(\partial^2/\partial t^2 + \widetilde{\Delta}_{E, \rho}) u(t;\psi) = 0, \quad u(0; \psi)=\psi, \, u_t(0;\psi)=0 $$
is given by
$$u(t;\psi) = \left( \cos(t \widetilde{\Delta}_E) \otimes Id \right) \psi.$$
Let $\varphi \in \mathcal{P}(\C)$ be even and let $k_\varphi(\tilde{x},\tilde{y})$ be the kernel of
$$ \varphi \left( (\widetilde{\Delta}_E)^{1/2}  \right) = \frac{1}{\sqrt{2\pi}} \int_\R \hat{\varphi}(t) \cos(t (\widetilde{\Delta}_E)^{1/2}) \, dt.$$

Then the kernel $H_\varphi$  is given by $H_\varphi(\tilde{x},\tilde{y}) = k_\varphi(\tilde{x}, \tilde{y}) \otimes Id$.
Let $R_\gamma: \widetilde{E}_{\tilde{y}} \to \widetilde{E}_{\gamma \tilde{y}}$
 be the canonical isomorphism. Then it follows from  that the kernel
of the operator $\varphi(\widetilde{\Delta}_E)^{1/2})$ is given by
$$ K_\varphi(x,y)=\sum_{\gamma \in \Gamma} k_\varphi(\tilde{x},\gamma\tilde{y})\circ(R_\gamma \otimes \rho(\gamma)).$$
\begin{Lemma}\label{PRETRAAACE}
Let $F_\rho$ be a flat vector orbibundle over $\Ob$, associated to a finite-dimensional complex representation $\rho: \Gamma \to GL(V)$. Let $\Delta_{E, \rho}^\#$
be the twisted connection Laplacian acting in $C^\infty(\Ob, E \otimes F_\rho)$. Let $\varphi\in \mathcal{P}(\C)$ be even and denote by $k_\varphi(\tilde{x},\tilde{y})$
the kernel of $\varphi\left((\widetilde{\Delta}_E)^{1/2}  \right)$. Then we have
$$\sum_{\lambda \in spec(\Delta_{E,\rho}^\#)} m(\lambda)\varphi(\lambda^{1/2}) = \sum_{\gamma \in \Gamma} \tr \rho(\gamma) \int_F \tr (k_\varphi(\tilde{x},\gamma\tilde{y})\circ R_\gamma) \, d\tilde{x}.$$
\end{Lemma}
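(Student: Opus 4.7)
The plan is to apply the abstract spectral trace identity of Lemma \ref{quantifiableconnection} to the non-selfadjoint operator $\Delta^\#_{E,\rho}$, substitute the explicit formula for its Schwartz kernel coming from the factorization $\widetilde{\Delta}^\#_{E,\rho} = \widetilde{\Delta}_E \otimes \Id$, and then unfold the integral over $\Ob$ to an integral over a fundamental domain $F$ in $M$, splitting the pointwise trace according to the tensor product structure $\widetilde{E}\otimes\widetilde{F}$.

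More concretely, I would first observe that $\Delta^\#_{E,\rho}$ is of the form $\Delta_{E\otimes F_\rho} + D$ with $D$ a first order operator, so Lemma \ref{quantifiableconnection} applies and gives
$$\sum_{\lambda \in \spec(\Delta^\#_{E,\rho})} m(\lambda)\varphi(\lambda^{1/2}) = \int_\Ob \tr K_\varphi(x,x)\, d\mu(x),$$
where $K_\varphi$ is the kernel of $\varphi((\Delta^\#_{E,\rho})^{1/2})$. Next I would substitute the explicit kernel formula derived at the end of Subsection \ref{Sec8},
$$K_\varphi(x,y) = \sum_{\gamma \in \Gamma} k_\varphi(\tilde{x},\gamma\tilde{y}) \circ (R_\gamma \otimes \rho(\gamma)),$$
and use the multiplicativity of the trace under tensor products: since $k_\varphi(\tilde{x},\gamma\tilde{x})\circ R_\gamma$ is an endomorphism of $\widetilde{E}_{\tilde{x}}$ and $\rho(\gamma)$ is an endomorphism of $V$, one has
$$\tr\bigl(k_\varphi(\tilde{x},\gamma\tilde{x})\circ R_\gamma \,\otimes\, \rho(\gamma)\bigr) = \tr\bigl(k_\varphi(\tilde{x},\gamma\tilde{x})\circ R_\gamma\bigr)\cdot \tr\rho(\gamma).$$
Summing over $\gamma$, the integrand $\tr K_\varphi(x,x)$ becomes $\Gamma$-invariant in $\tilde{x}$ and descends to $\Ob$; unfolding to the fundamental domain $F\subset M$ and interchanging the sum with the integral then yields the claimed formula.

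The only delicate point is the interchange of summation and integration. This is where I would invoke Lemma \ref{Prop3.3}, which guarantees that $H_\varphi$, and hence $k_\varphi$, vanishes outside the $\delta$-neighborhood $U_\delta$ of the diagonal in $M\times M$ for some $\delta>0$. Since $\Gamma$ acts properly discontinuously on $M$ and the closure $\overline{F}$ is compact, the set $\{\gamma\in\Gamma : d(\overline{F},\gamma \overline{F})<\delta\}$ is finite, so for each $\tilde x\in\overline F$ only finitely many $\gamma$ contribute to the sum $\sum_\gamma k_\varphi(\tilde x,\gamma\tilde x)\circ R_\gamma$. This makes the Fubini-type swap trivial, and the remainder of the argument is bookkeeping: identifying the factors $R_\gamma$ versus $\rho(\gamma)$ in the correct tensor slots so that $\tr\rho(\gamma)$ factors cleanly out of the integral over $F$.
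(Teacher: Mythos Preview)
Your proposal is correct and follows essentially the same route as the paper: the paper first establishes, via Lemma~\ref{quantifiableconnection} and the unfolding argument, the general identity $\sum_\lambda m(\lambda)\varphi(\lambda^{1/2})=\sum_{\gamma}\int_F\tr(H_\varphi(\tilde x,\gamma\tilde x)\circ\rho(\gamma))\,d\tilde x$ (the analogue of \cite[Proposition~3.4]{Mu}), and then specializes in Subsection~\ref{Sec8} to $\Delta^\#_{E,\rho}$ using $\widetilde{\Delta}^\#_{E,\rho}=\widetilde{\Delta}_E\otimes\Id$ to factor $H_\varphi=k_\varphi\otimes\Id$ and split off $\tr\rho(\gamma)$. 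Your argument compresses these two steps into one, but the ingredients and their order of use are the same.
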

\begin{remark}
Lemma \ref{PRETRAAACE} was proved for manifolds in \cite[Proposition 4.1]{Mu}.
\end{remark}
\subsection{Locally symmetric subspaces and the pre-trace formula}\label{Sec9}

Let $G$ be a connected semisimple real Lie group of non-compact type with finite center. Let $K \subset G$ be a maximal compact subgroup of $G$. Let $G=KAN$ be its Iwasawa decomposition, and let $M$ be the centralizer of $A$ in $G$. Denote by $\mathfrak{g}$, $\mathfrak{k}$ and $\mathfrak{m}$
the Lie algebras of $G$, $K$ and $M$, respectively. Let
$$ \mathfrak{g} = \mathfrak{p} \oplus \mathfrak{k}$$
be the Cartan decomposition. Denote $S := G / K$ a Riemannian symmetric space of non-positive curvature, whose invariant metric is obtained translating the restriction
of the Killing form to $\mathfrak{p} \cong T_e(G / K)$.
Let $\tau: K \to GL(V_\tau)$ be a finite-dimensional unitary representation of $K$, and let
$$\tilde{E}_\tau = (G \times V_\tau)/K \to G/K$$
be the associated homogeneous vector bundle, where $K$ acts on $G \times V_{\tau}$ by
$$ (g,v) k = (gk, \tau(k^{-1}) v), \quad g\in G, \, k\in K, \, v\in V_\tau.$$
Let
\begin{equation}\label{5.2}
C^\infty(G;\tau) := \{ f:G\to V_\tau \, | \, f\in C^\infty, f(gk)=\tau(k^{-1})f(g), \quad g\in G, k\in K    \}.
\end{equation}
Similarly, we denote by $C_c^\infty(G; \tau)$  the subspace of compactly supported functions in $C^\infty(G;\tau)$ and by $L^2(G;\tau)$ 
the completion of $C_c^\infty(G;\tau)$ with respect to the inner product 
$$ \langle f_1,f_2\rangle = \int_{G / K} \langle f_1(g), f_2(g)\rangle d\dot{g}.$$
There is a canonical isomorphism \cite[page 4]{Mi}
\begin{equation}\label{5.3}
C^\infty(S, \tilde{E}_\tau) \cong C^\infty(G; \tau).
\end{equation}
Similarly, there are isomorphisms $C_c^\infty(S, \tilde{E}_\tau) \cong C_c^\infty(G; \tau)$ and $L^2(S, \tilde{E}_\tau) \cong L^2(G;\tau)$.
Let $\nabla^\tau$ be the canonical $G$-invariant connection on $\tilde{E}_\tau$ defined by
$$ \nabla_{g_* Y}^\tau f(g K) = \frac{d}{dt}\Big| _{t = 0} (g \exp(tY))^{-1} f(g \exp(tY) K),$$
where $f \in C^\infty(G;\tau)$ and $Y \in \mathfrak{p}$. Let $\tilde{\Delta}_\tau$ be the associated Bochner-Laplace operator.
Then $\tilde{\Delta}_\tau$ is $G$-invariant, that is $\tilde{\Delta}_\tau$ commutes with the right action of $G$ on $C^\infty(S, \tilde{E_\tau})$.
Let $\Omega \in Z(\mathfrak{g}_\C)$ and $\Omega_K \in Z(\mathfrak{k}_\C)$ be the Casimir elements of $G$ and $K$, respectively.
Assume that $\tau$ is irreducible. Let $R$ denote the right regular representation of $G$ on $C^\infty(G; \tau)$. Then with respect to (\ref{5.3}), we have
\begin{equation}\label{20}
\tilde{\Delta}_\tau = -R(\Omega) + \lambda_\tau \Id,
\end{equation}
where $\lambda_\tau = \tau(\Omega_K)$ is the Casimir eigenvalue of $\tau$ \cite[Proposition 1.1]{Mi}. We note that $\lambda_\tau \geqslant 0$.

Let $\varphi \in C_0^\infty(\C)$ be an even function. Then $\varphi(\tilde{\Delta}_\tau)$ is a $G$-invariant integral operator, therefore its kernel $k_\varphi$ satisfies
$$ k_\varphi(g \tilde{x}, g\tilde{y}) = k_\varphi(\tilde{x}, \tilde{y}), \quad g \in G, \quad \tilde{x}, \tilde{y} \in \HH^{2n+1}.$$
\begin{remark}
In the scalar case, i.e.~when $\tau$ is a trivial representation, this is a point-pair invariant considered originally by Selberg \cite{Sel}.
\end{remark}
With respect to the isomorphism (\ref{5.3}) $k_\varphi$ can be identified with
a compactly supported smooth function
$$ h_\varphi: G \to \End(V_\tau),$$
which satisfies
$$ h_\varphi(k_1 g k_2) = \tau(k_1) \circ h_\varphi(g) \circ \tau(k_2), \quad k_1, k_2 \in K.$$
Then $\varphi(\tilde{\Delta}_\tau^{1/2})$ acts by convolution:
\begin{equation}
\left( \varphi(\tilde{\Delta}_\tau^{1/2}) f \right) (g_1) = \int_G h_\varphi(g_1^{-1} g_2)(f(g_2)) dg_2.
\end{equation}
Let $E_\tau = \Gamma \backslash\tilde{E}_\tau$ be the locally homogeneous vector orbibundle over $\Gamma \backslash S$ induced by $\tilde{E}_\tau$.
 Let $\chi: \Gamma \to GL(V_\chi)$ be a finite-dimensional complex representation and let $F_\chi$ be the associated flat vector bundle over $\Gamma \backslash S$.
 Let $\Delta_{\tau, \chi}^\#$ be the twisted connection Laplacian acting in $C^\infty(\Gamma \backslash S, E_\tau \otimes F_\xi)$. Then it follows from (4.3)
 that the kernel $K_\varphi$ of $\varphi(\tilde{\Delta}_\tau)^{1/2}$ is given by
\begin{equation}
 K_\varphi(g_1 K, g_2 K) = \sum_{\gamma \in \Gamma} h_\varphi(g_1^{-1} \gamma g_2) \otimes \chi(\gamma).
\end{equation}
By Proposition 4.1 we get
\begin{equation}\label{gebrochenherze}
\sum_{\lambda\in \spec(\Delta_{E, \rho}^\#)} m(\lambda) \varphi(\lambda^{1/2}) = \sum_{\gamma \in \Gamma} \tr \chi(\gamma) \int_{\Gamma \backslash G} \tr h_\varphi(g^{-1} \gamma g) d\dot{g}.
\end{equation}
Now collect the terms in the right hand side of (\ref{gebrochenherze})  according to their conjugacy classes. Given $\gamma \in \Gamma$,
 denote by $\{ \gamma\}_\Gamma$ its $\Gamma$-conjugacy class, by $\Gamma_\gamma$ and $G_\gamma$ 
the centralizers of $\gamma$ in $\Gamma$ and $G$, respectively. Separating $\{ e\}_\Gamma$, we obtain a pre-trace formula.

\begin{Proposition}\label{propSTF} [Pre-trace formula]
For all even $\varphi\in \mathcal{P}(\C)$ we have:
\begin{equation*}
\begin{gathered}
\sum_{\lambda\in \spec(\Delta_{E, \rho}^\#)} m(\lambda) \varphi(\lambda^{1/2}) = \dim(V_\chi) \vol (\Gamma \backslash S) \tr h_\varphi(e) + \\
+ \sum_{\{\gamma\}_\Gamma \neq \{e\}} \tr \chi(\gamma) \vol (\Gamma_\gamma \backslash G_\gamma)  \int_{G_\gamma \backslash G} \tr h_\varphi(g^{-1} \gamma g) d\dot{g}.
\end{gathered}
\end{equation*}
\end{Proposition}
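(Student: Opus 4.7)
The plan is to start from formula (\ref{gebrochenherze}), which already expresses the spectral sum as a sum over all of $\Gamma$:
$$\sum_{\lambda\in \spec(\Delta_{E, \rho}^\#)} m(\lambda) \varphi(\lambda^{1/2}) = \sum_{\gamma \in \Gamma} \tr \chi(\gamma) \int_{\Gamma \backslash G} \tr h_\varphi(g^{-1} \gamma g) d\dot{g}.$$
I would separate the contribution of $\gamma = e$ from the rest. For $\gamma = e$ we have $\tr \chi(e) = \dim V_\chi$ and $h_\varphi(g^{-1} e g) = h_\varphi(e)$ is constant, so this term contributes $\dim(V_\chi) \vol(\Gamma \backslash G) \tr h_\varphi(e)$, which under the normalization $\vol(K) = 1$ equals $\dim(V_\chi) \vol(\Gamma \backslash S) \tr h_\varphi(e)$.

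For the remaining terms I would group $\gamma \in \Gamma \setminus \{e\}$ according to their $\Gamma$-conjugacy classes. Two observations make this rearrangement transparent: first, $\tr \chi(\gamma)$ depends only on the conjugacy class $\{\gamma\}_\Gamma$ since $\chi$ is a group homomorphism; second, the integrand $\tr h_\varphi(g^{-1} \gamma' g)$ with $\gamma' = \delta \gamma \delta^{-1}$ can be rewritten as $\tr h_\varphi((\delta^{-1} g)^{-1} \gamma (\delta^{-1} g))$. Parametrizing $\{\gamma\}_\Gamma$ by cosets in $\Gamma_\gamma \backslash \Gamma$ and performing the change of variable $g \mapsto \delta^{-1} g$ on $\Gamma \backslash G$, I would unfold
$$\sum_{\gamma' \in \{\gamma\}_\Gamma} \int_{\Gamma \backslash G} \tr h_\varphi(g^{-1} \gamma' g) d\dot{g} = \int_{\Gamma_\gamma \backslash G} \tr h_\varphi(g^{-1} \gamma g) d\dot{g}.$$

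Next I would use the fact that the function $g \mapsto \tr h_\varphi(g^{-1} \gamma g)$ is left $G_\gamma$-invariant (since any $g_\gamma \in G_\gamma$ commutes with $\gamma$), so it descends to a function on $G_\gamma \backslash G$. Fubini with respect to the fibration $\Gamma_\gamma \backslash G \to G_\gamma \backslash G$ with fiber $\Gamma_\gamma \backslash G_\gamma$ then gives
$$\int_{\Gamma_\gamma \backslash G} \tr h_\varphi(g^{-1} \gamma g) d\dot{g} = \vol(\Gamma_\gamma \backslash G_\gamma) \int_{G_\gamma \backslash G} \tr h_\varphi(g^{-1} \gamma g) d\dot{g},$$
which assembles the hyperbolic part of the formula. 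Collecting the identity and non-identity contributions yields the stated pre-trace formula.

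The main point requiring care is the unfolding step and the finiteness of the volume $\vol(\Gamma_\gamma \backslash G_\gamma)$; the latter is standard for semisimple $\gamma$ and follows in the compact quotient setting from the fact that $\Gamma_\gamma \backslash G_\gamma$ embeds as a closed subset of the compact space $\Gamma \backslash G$ after picking an appropriate slice, so all manipulations are between convergent integrals. The unitarity of $\chi$ is not needed at this stage, only that $\chi$ is a finite-dimensional representation, which is why the same argument works in the non-unitary case treated here.
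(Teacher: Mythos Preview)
Your proposal is correct and follows exactly the approach of the paper, which simply states that one should ``collect the terms in the right hand side of (\ref{gebrochenherze}) according to their conjugacy classes'' and separate $\{e\}_\Gamma$. In fact you spell out the unfolding $\sum_{\gamma'\in\{\gamma\}_\Gamma}\int_{\Gamma\backslash G}\to\int_{\Gamma_\gamma\backslash G}\to\vol(\Gamma_\gamma\backslash G_\gamma)\int_{G_\gamma\backslash G}$ in more detail than the paper does.
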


\subsection{Hyperbolic space.}\label{rankonemonkey}


Proposition \ref{propSTF} is valid for $\Gamma = \Gamma \bs G / K$, where $G$ and $K$ are as in Subsection \ref{Sec9}. Here we specialize it to the case of odd-dimensional hyperbolic compact orbifolds. For this take $G = SO_0(1,2n+1)$, $K = SO(2n+1)$, so $S = G/K$, equipped with an invariant metric as in the previous subsection, is isometric to the hyperbolic space $\HH^{2n+1}$. Let $\Gamma \subset G$ be a discrete group acting properly discontinuously on $\HH^{2n+1}$ and suppose $\Gamma \bs \HH^{2n+1}$ is a  compact orbifold. This implies $\Gamma \setminus \{e\}$ consists of two types of elements:
\begin{definition}
$\gamma \in \Gamma$ is called  elliptic if it is of finite order.
\end{definition}
\begin{definition}
$\gamma \in \Gamma$ is called hyperbolic if 
$$ l(\gamma) := \inf_{x \in \HH^{2n+1}} d(x, \gamma x) > 0.$$
\end{definition}

\begin{Lemma}
\cite[Lemma 6.6]{Wal}
For $\gamma$ hyperbolic there exists $g \in G$, $m_\gamma \in SO(2n)$, $a_\gamma \in A^+$ s.t. 
$$ g \gamma g^{-1} = m_\gamma a_\gamma.$$
Here $t_\gamma$ is unique, and $m_\gamma$ is determined up to conjugacy in $SO(2n)$.
\end{Lemma}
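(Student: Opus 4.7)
The plan is to exploit the geometric characterization of hyperbolic isometries of $\HH^{2n+1}$ as translations along a geodesic, together with the transitive action of $G$ on oriented unit-speed geodesics. Since $\gamma$ is hyperbolic, the displacement function $d_\gamma(x) := d(x, \gamma x)$ is strictly convex on $\HH^{2n+1}$ and attains the positive infimum $l(\gamma)$. Standard CAT$(-1)$ geometry then forces the minimum set $\mathrm{Min}(\gamma)$ to be a single complete geodesic $c_\gamma$, along which $\gamma$ acts as translation by $l(\gamma)$. I would justify this by combining the convexity of $d_\gamma$ (minimum set is totally geodesic) with the constant negative curvature of $\HH^{2n+1}$ (no higher-dimensional flat subspaces).

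Next, fix once and for all a reference geodesic $c_0 \subset S = G/K$ through the basepoint $eK$ that is translated by the one-parameter subgroup $A$; this is the geodesic tangent to the line $\mathfrak{a} \subset \mathfrak{p}$. Since $G$ acts transitively on oriented unit-speed geodesics, there exists $g \in G$ carrying the oriented geodesic $c_\gamma$ to $c_0$. The conjugate $g\gamma g^{-1}$ then stabilizes $c_0$ with orientation and acts on it as translation by $l(\gamma) > 0$. I would then identify the oriented stabilizer of $c_0$ in $G$ with $MA$: the group $A$ acts as translations along $c_0$, while $M$ (the centralizer of $A$ in $K$) fixes the basepoint $eK$ and commutes with $A$, hence fixes $c_0$ pointwise. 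Conversely, for any $h$ preserving $c_0$, choose the unique $a \in A$ implementing the same translation as $h$; then $h a^{-1}$ fixes $c_0$ pointwise, belongs to $K$, and thus lies in $M$. Applying this to $g\gamma g^{-1}$ yields the decomposition $g\gamma g^{-1} = m_\gamma a_\gamma$ with $m_\gamma \in M$ and $a_\gamma \in A^+$ (the positivity reflects the chosen orientation).

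For uniqueness, $a_\gamma$ is determined by the translation length $l(\gamma)$, which is a conjugation invariant, so it is unique. The element $m_\gamma$, however, depends on the choice of $g$: any two admissible choices $g, g'$ differ by $k := g' g^{-1}$ preserving $c_0$ with its orientation, so $k \in MA$. Since $A$ centralizes $M$, the conjugation $k m_\gamma k^{-1}$ differs from $m_\gamma$ by an element of $M$, so $m_\gamma$ is well-defined up to conjugacy in $M = SO(2n)$. The main technical point is the identification of the oriented stabilizer of $c_0$ with $MA$, which is the only step that genuinely uses the rank-one structure of $G$; the rest is formal. Since the author cites Wallach's Lemma 6.6, the proof is essentially a matter of assembling these standard facts about rank-one symmetric spaces.
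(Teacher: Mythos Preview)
Your argument is correct. The paper itself offers no proof of this lemma: it is stated with a bare citation to \cite[Lemma 6.6]{Wal} and used as input for the orbital integral computations. Your geometric approach---locating the axis of a hyperbolic isometry via convexity of the displacement function, conjugating it to the reference $A$-orbit using transitivity of $G$ on oriented geodesics, and then identifying the oriented stabilizer of that geodesic with $MA$---is the standard rank-one argument and is sound as written. The uniqueness claims are handled correctly: $a_\gamma$ is pinned down by the translation length, and any two admissible conjugators differ by an element of $MA$, which conjugates $m_\gamma$ inside $M$ while leaving $a_\gamma$ fixed.
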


By the Lemma \ref{seelem}, there exists a normal torsion free subgroup $\Gamma'$ of finite index in~$\Gamma$. Denote by $\Gamma'_\gamma$ the centralizer of $\gamma$ in $\Gamma'$.  Note that $\Gamma_\gamma$ does not necessarily consist of hyperbolic elements only, so $\Gamma'_\gamma$ and $\Gamma_\gamma$ may be different; to measure the difference introduce 
$$ v(\gamma) := \vol (\Gamma_\gamma \bs G_\gamma) / \vol(\Gamma'_\gamma \bs G_\gamma).$$
 The structure of $\Gamma'_\gamma$ is known:
\begin{Proposition}\label{verstandnurihresprachenicht}
 The centralizer $\Gamma'_\gamma$  is an infinite cyclic group. Moreover there exists a hyperbolic primitive element
$\gamma_0 \in \Gamma'$ such that $ \Gamma'_\gamma = \langle \gamma_0 \rangle$
and $ \gamma = \gamma_0^{n_{\Gamma'} (\gamma)}$.
\end{Proposition}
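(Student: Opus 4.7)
The plan is to exploit the geometry of the axis of the hyperbolic element $\gamma$ in $\HH^{2n+1}$. After conjugating, we may take $\gamma = m_\gamma a_\gamma \in MA^+$, and $\gamma$ then has a unique invariant geodesic $L_\gamma$ whose endpoints on $\partial \HH^{2n+1}$ are the two fixed points of $\gamma$; along $L_\gamma$, $\gamma$ translates by $l(\gamma)>0$ while rotating around it by $m_\gamma$. The first step is to show that every $\delta \in G_\gamma$ preserves $L_\gamma$ together with its natural orientation: $\delta$ must permute the two boundary fixed points of $\gamma$, hence preserves $L_\gamma$ setwise, and since $\delta \gamma \delta^{-1} = \gamma$ translates in a definite direction, $\delta$ cannot reverse orientation. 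Consequently one obtains a continuous translation-length homomorphism $t : G_\gamma \to \R$.

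Next I would identify $\ker t$: it consists of elements fixing $L_\gamma$ pointwise, so it is contained in the stabilizer of any point of $L_\gamma$, which is a maximal compact subgroup of $G$; in particular $\ker t$ is compact. The intersection $\Gamma'_\gamma \cap \ker t$ is then a discrete subgroup of a compact group, hence finite, and torsion-freeness of $\Gamma'$ (guaranteed by the Selberg Lemma~\ref{seelem}) forces it to be trivial. Thus $t|_{\Gamma'_\gamma}$ is injective. Its image is a discrete subgroup of $\R$ containing the positive element $l(\gamma)$, hence equals $\alpha \Z$ for some $\alpha > 0$. Taking $\gamma_0$ to be the unique element of $\Gamma'_\gamma$ with $t(\gamma_0) = \alpha$ produces a generator of $\Gamma'_\gamma$, showing $\Gamma'_\gamma \cong \Z$ and yielding $\gamma = \gamma_0^{n_{\Gamma'}(\gamma)}$ with $n_{\Gamma'}(\gamma) = l(\gamma)/\alpha$.

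Finally I would verify that $\gamma_0$ is hyperbolic and primitive. Hyperbolicity is immediate from $t(\gamma_0) = \alpha > 0$. For primitivity: if $\gamma_0 = \delta^n$ with $\delta \in \Gamma'$ and $n \geqslant 2$, then $\delta$ commutes with $\gamma_0$ and therefore with $\gamma$, so $\delta \in \Gamma'_\gamma$; but then $t(\delta) = \alpha/n < \alpha$ contradicts the minimality of $\alpha$. The main obstacle is really the first step --- the precise verification that centralizing $\gamma$ forces preservation of the oriented axis --- since the remaining steps are formal consequences of discreteness and torsion-freeness. An equivalent Lie-theoretic route would compute $G_\gamma$ directly as $Z_M(m_\gamma) \cdot A$ and then apply the same projection to $A \cong \R$.
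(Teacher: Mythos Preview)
The paper states this proposition without proof, treating it as a standard structural fact about torsion-free discrete subgroups of $SO_0(1,2n+1)$; it is invoked immediately thereafter in Lemma~\ref{moyaaktrisa}. Your argument is a correct self-contained proof and follows the classical route via the axis and translation-length homomorphism; the alternative Lie-theoretic identification $G_\gamma = Z_M(m_\gamma)\cdot A$ you mention at the end is equivalent and is in fact how such results are usually phrased in the references the paper draws on (e.g.\ Wallach).

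One point deserves to be made explicit. You assert that the image $t(\Gamma'_\gamma)\subset\R$ is discrete, but this is not automatic from discreteness of $\Gamma'_\gamma$ alone, since homomorphic images of discrete groups need not be discrete. Here it follows from the compactness of $\ker t$ that you have already established: if $t(\delta_n)$ accumulates in $\R$, then after passing to a subsequence the $\ker t$-components of the $\delta_n$ converge by compactness, hence the $\delta_n$ themselves converge in $G_\gamma$, contradicting discreteness of $\Gamma'$. With this point filled in, the rest of the argument (injectivity of $t|_{\Gamma'_\gamma}$ from torsion-freeness, the generator $\gamma_0$, and primitivity via the minimality of $\alpha$) goes through exactly as you wrote.
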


We are interested in calculating $\int_{G_\gamma \backslash G} \tr h_\varphi(g^{-1} \gamma g) d\dot{g}$ appearing in Proposition \ref{propSTF}  more precisely. Recall that 
for $\sigma \in \widehat{SO(2n)}$ and $\lambda\in \R$ one can define the unitarily induced representations $\pi_{\sigma, \lambda}$ as in \cite[p. 177]{Wal}, let $\Theta_{\sigma,\lambda}$ denote the character of $\pi_{\sigma, \lambda}$.
For hyperbolic $\gamma$  we slightly modify \cite[Theorem 6.7]{Wal}:
\begin{Lemma}\label{moyaaktrisa} Let $\gamma \in \Gamma$ be a hyperbolic element. Then the following holds.
\begin{equation*}
\begin{gathered}
\vol(\Gamma_\gamma \bs G_\gamma)   \tr(\chi(\gamma))   \int_{G_\gamma \backslash G} \tr h_\varphi(g^{-1} \gamma g) d\dot{g} = \\    \frac{\tr(\chi(\gamma)) v(\gamma) l(\gamma_0)}{2 \pi D(\gamma)} \cdot \sum_{\sigma \in \widehat{SO(2n)}} \overline{\tr \sigma(\gamma)} \int_{\R} \Theta_{\sigma, \lambda}(h_\varphi)\cdot e^{-il(\gamma) \lambda} d \lambda,
\end{gathered}
\end{equation*} 
where 
\begin{equation}
D(\gamma) = e^{-n l(\gamma)} \left|\det(\Ad(m_\gamma a_\gamma)|_{\mathfrak{n}} - \Id)\right|.
\end{equation}
\end{Lemma}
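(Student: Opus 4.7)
The plan is to follow Wallach's Theorem 6.7 closely, making only the modifications required for the orbifold/non-unitary setting. Since $\tr\chi(\gamma)$ is a scalar that depends only on the conjugacy class of $\gamma$, it commutes with the orbital integral and simply passes through all manipulations. The only genuinely new point is that $\Gamma_\gamma$ need not be cyclic — it may contain torsion. To handle this, I would pass to the torsion-free normal subgroup $\Gamma' \subset \Gamma$ of finite index provided by Lemma \ref{seelem}, write $\vol(\Gamma_\gamma \bs G_\gamma) = v(\gamma)\vol(\Gamma'_\gamma \bs G_\gamma)$ by definition of $v(\gamma)$, and use Proposition \ref{verstandnurihresprachenicht} to identify $\Gamma'_\gamma = \langle\gamma_0\rangle$ with $\gamma_0$ primitive hyperbolic. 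With the standard normalization of Haar measure on $G_\gamma$, one then has $\vol(\Gamma'_\gamma \bs G_\gamma) = l(\gamma_0)$ (up to compact-centralizer factors absorbed on the spectral side by $\sum_\sigma \overline{\tr\sigma(\gamma)}$).

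For the orbital integral itself, after conjugating so that $\gamma = m_\gamma a_\gamma$, I would apply the Iwasawa decomposition $G = KAN$ and a standard change of variables for the conjugation map on $N$ (as in \cite{Wal}), which gives
\[
\int_{G_\gamma \bs G} \tr h_\varphi(g^{-1} \gamma g)\,d\dot g = \frac{1}{D(\gamma)}\int_N \tr h_\varphi(n^{-1} m_\gamma a_\gamma n)\,dn,
\]
where $D(\gamma) = e^{-n l(\gamma)}|\det(\Ad(m_\gamma a_\gamma)|_{\mathfrak{n}} - \Id)|$ is precisely the Jacobian, with the prefactor $e^{-nl(\gamma)} = a_\gamma^{-\rho}$ reflecting the half-sum of positive $(\mathfrak{g},\mathfrak{a})$-roots $\rho = n$ for $\HH^{2n+1}$. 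Next I would invoke the principal series character formula: the character $\Theta_{\sigma,\lambda}(h_\varphi)$ of the unitarily induced representation $\pi_{\sigma,\lambda}$ is (up to normalization) the Fourier transform in $\lambda$ of the $(M,A)$-radial component of $h_\varphi$, weighted by $\tr\sigma(m_\gamma)$. Decomposing $\tau|_M$ into $\widehat{M}$-isotypic pieces, inverting the Fourier transform against $e^{-il(\gamma)\lambda}$, and summing over $\sigma \in \widehat{SO(2n)}$ then yields the right-hand side of the lemma, the complex conjugate appearing because the inverse Fourier transform is paired against $e^{-il(\gamma)\lambda}$ rather than $e^{+il(\gamma)\lambda}$.

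The main obstacle I anticipate is purely bookkeeping: tracking normalizations of Haar measures on $K$, $M$, $A$, $N$, the Plancherel factor $1/(2\pi)$ in the $\lambda$-integral, and the precise placement of the exponential $e^{-n l(\gamma)}$ inside $D(\gamma)$. None of these normalizations is affected by replacing a manifold quotient by an orbifold quotient or by twisting with a non-unitary representation, so the analytic core of the argument is identical to \cite[Theorem 6.7]{Wal}; the only visible differences in the final statement are the explicit factors $\tr\chi(\gamma)$ and $v(\gamma)$, produced respectively in the passage through the trace and in the comparison of volumes between $\Gamma_\gamma$ and its torsion-free subgroup $\Gamma'_\gamma$.
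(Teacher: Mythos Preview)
Your proposal is correct and follows exactly the approach of the paper: the paper does not give a detailed proof either, merely stating that one ``slightly modifies \cite[Theorem 6.7]{Wal}'' and remarking that the only differences from Wallach's formula are the factors $\tr(\chi(\gamma))$ and $v(\gamma)$, which arise precisely for the reasons you identify (the scalar trace passes through, and the volume comparison $\vol(\Gamma_\gamma\backslash G_\gamma)=v(\gamma)\vol(\Gamma'_\gamma\backslash G_\gamma)$ together with Proposition~\ref{verstandnurihresprachenicht} produces $v(\gamma)\,l(\gamma_0)$). Your sketch in fact supplies more detail on the underlying Wallach computation than the paper does.
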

\begin{remark}
The difference between \cite[Theorem 6.7]{Wal} and Lemma \ref{moyaaktrisa} is in the presence of~$v(\gamma)$ and $\tr(\chi(\gamma))$. If $\Gamma$ contains no elliptic elements, one has $v(\gamma) = 1$.  
\end{remark}

\subsection{Orbital integrals for elliptic elements.}\label{sec:odddim}
It remains to calculate the orbital integrals
$$E_\gamma(h_\varphi) := \int_{G_\gamma \backslash G} \tr h_\varphi(g^{-1} \gamma g) d\dot{g}$$ for $\gamma \in \Gamma$ elliptic.  We may assume $\gamma$ is of the form: 
$$\gamma = \left( \begin{smallmatrix}
   1\\
   & \ddots  \\ 
   &  & 1\\
   & & & R_{\phi_{k+1}}\\
   & & & & \ddots\\
   & & & & & R_{\phi_{n+1}}
 \end{smallmatrix} \right),$$
where $R_{\phi} = \left(\begin{smallmatrix}
\cos \phi & \sin \phi \\
-\sin \phi & \cos \phi
\end{smallmatrix}\right)$, $\phi \in (0, 2\pi)$. Note that the stabilizer $G_\gamma$ does not equal $SO_0(1,1)\times (\T^{2})^{n}$ if $k>1$ or if  $\phi_i = \phi_j$, hence in general $\gamma$ is not a regular element. For further use we want to approximate it by a sequence of regular elements $\gamma_\varepsilon$ parametrized by~$\varepsilon$:
\begin{equation}\label{varepsilon}
\gamma_\varepsilon = \left(  \begin{smallmatrix}
1\\
& 1\\
& & R_{\varepsilon_2}\\
& && \ddots  \\ 
& && & R_{\varepsilon_k}\\
& & && & R_{\varepsilon_{k+1}}\\
& & && & & \ddots\\
& & & && & & R_{\varepsilon_{n+1}}
\end{smallmatrix} \right),
\end{equation}
where $\varepsilon_i \in \R$ are chosen in the following way: all $\varepsilon_i$ are different and 
\begin{equation*}
\begin{gathered}
\lim_{\varepsilon \to 0} \varepsilon_i = 0, \quad i \le k,\\
\lim_{\varepsilon \to 0} \varepsilon_i = \phi_i, \quad i > k.\\
\end{gathered}
\end{equation*}
The strategy for the subsection is the following: first we recall how to calculate  $E_{\gamma_\varepsilon}(h)$, second we apply a certain element of the symmetric algebra $S(\mathfrak{b}_\C)$, and set $\varepsilon = 0$ to obtain $E_{\gamma}(h)$. 
The following lemma is based on \cite[Theorem 13.1]{Kna} with adaptations made it suitable for our setting.
\begin{Lemma}\label{neptune}
The orbital integral $E_{\gamma_\varepsilon}(h_{\varphi})$ can be expressed as
\begin{equation}\label{orbitint}
E_{\gamma_\varepsilon}(h_\varphi) = C\cdot \sum_{\sigma \in \widehat{SO(2n)}} \int_{\R}   \sum_{s \in W} \det (s)
\left( \xi_{-s(\Lambda(\sigma) + \delta_M)- i e_1 \nu}(\gamma_\varepsilon) \right)\cdot \Theta_{\sigma,\nu} (h_\varphi) d \nu,
\end{equation}
where $C \in \R \setminus\{0\}$ does not depend on $\varepsilon$. The quantities above which have not yet been defined, shall be defined in the course of the proof. The sum in (\ref{orbitint}) is finite, because $h_\varphi$ is $K$-finite.
\end{Lemma}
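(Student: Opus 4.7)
The plan is to apply Harish-Chandra's formula for orbital integrals of $K$-finite functions at regular semisimple elements, in the form given by Theorem 13.1 of \cite{Kna}, and to unwind it via the Weyl character formula on the Cartan subgroup $B = AT_M$ of $G = SO_0(1,2n+1)$.

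First I would set up the relevant data. Let $T_M \subset M = SO(2n)$ be the standard maximal torus, so that $B = AT_M$ is a Cartan subgroup of $G$ with Lie algebra $\mathfrak{b} = \mathfrak{a} \oplus \mathfrak{t}_M$. Fix $e_1$ a basis vector of $\mathfrak{a}^*$, let $W = W(\mathfrak{g}_\C, \mathfrak{b}_\C)$ be the complex Weyl group, $\delta_M$ the half-sum of positive roots of $(\mathfrak{m}_\C, (\mathfrak{t}_M)_\C)$, and $\xi_\mu$ the character of $B$ associated to $\mu \in \mathfrak{b}_\C^*$. The element $\gamma_\varepsilon$ lies in $T_M$, and the genericity conditions on the $\varepsilon_i$ (all distinct, none equal to $0$ or $\pi$ for the chosen range of $\varepsilon$) guarantee that its centralizer in $G$ is exactly $B$, so $\gamma_\varepsilon$ is regular.

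Second, the bi-$K$-equivariance $h_\varphi(k_1 g k_2) = \tau(k_1)\, h_\varphi(g)\, \tau(k_2)$ together with $\dim \tau < \infty$ implies that $\tr h_\varphi$ is bi-$K$-finite, hence only finitely many $\sigma \in \widehat{SO(2n)}$ can contribute nontrivially to the Plancherel-type inversion of the orbital integral. This accounts for the finiteness of the sum in (\ref{orbitint}). Knapp's theorem then yields
$$
E_{\gamma_\varepsilon}(h_\varphi) = \frac{C_0}{\Delta_B(\gamma_\varepsilon)} \sum_{\sigma \in \widehat{SO(2n)}} \int_\R \Theta_{\sigma,\nu}(h_\varphi)\cdot \Theta_{\sigma,\nu}(\gamma_\varepsilon)\, d\nu,
$$
where $\Delta_B(\gamma_\varepsilon)$ is the Weyl denominator and $C_0$ is a normalization depending only on the invariant measures on $G_\gamma \backslash G$. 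Substituting the Weyl character formula for the principal series character on $B$,
$$
\Theta_{\sigma,\nu}(\gamma_\varepsilon) = \frac{1}{\Delta_B(\gamma_\varepsilon)}\sum_{s \in W}\det(s)\,\xi_{-s(\Lambda(\sigma)+\delta_M)-ie_1\nu}(\gamma_\varepsilon),
$$
and combining the two occurrences of $\Delta_B(\gamma_\varepsilon)$ with the Jacobian of Weyl's integration formula implicit in Knapp's derivation, one sees that the $\Delta_B$'s cancel completely and one obtains (\ref{orbitint}) with a constant $C$ independent of $\varepsilon$.

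The main obstacle is the careful bookkeeping needed to verify the complete cancellation of the Weyl denominator, i.e.\ that the constant $C$ truly does not depend on $\varepsilon$. This is critical because, as indicated in the strategy for the subsection, the next step is to apply a differential operator from $S(\mathfrak{b}_\C)$ to (\ref{orbitint}) and pass to the limit $\varepsilon \to 0$ to recover $E_\gamma(h_\varphi)$ for the (possibly non-regular) element $\gamma$; any residual $\varepsilon$-dependence hidden in $C$ would spoil that limit. The matrix-valued nature of $h_\varphi$ is not an obstacle: since $\tr h_\varphi$ is $K$-finite, Knapp's formula applies after taking the trace and decomposing into $K$-types.
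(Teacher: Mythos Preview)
Your approach coincides with the paper's: both rest on \cite[Theorem~13.1]{Kna} applied to the regular element $\gamma_\varepsilon$. The paper's proof is in fact nothing more than fixing the Cartan data (the basis $H_j$, the roots of $(\mathfrak{g}_\C,\mathfrak{h}_\C)$ and $(\mathfrak{m}_\C,\mathfrak{b}_\C)$, the characters $\xi_\Omega$), and then quoting Knapp. Your explanation of the $K$-finiteness of $\tr h_\varphi$ is a welcome addition.

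Two places where your write-up needs correction. First, $W$ is not the full Weyl group $W(\mathfrak{g}_\C,\mathfrak{h}_\C)$: in the paper it is the Weyl group of $\Delta(\mathfrak{so}(2n)_\C,\mathfrak{b}_\C)$, i.e.\ of $M=SO(2n)$. The formula (\ref{orbitint}) has $s$ acting only on the $M$-weight $\Lambda(\sigma)+\delta_M$, with $-ie_1\nu$ sitting outside; this is consistent only if $s$ fixes $e_1$, which fails for the $D_{n+1}$-Weyl group. With your choice of $W$ the expression $\xi_{-s(\Lambda(\sigma)+\delta_M)-ie_1\nu}$ would not even match the right-hand side of the lemma.

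Second, your ``complete cancellation'' of two copies of $\Delta_B(\gamma_\varepsilon)$ against a Weyl-integration Jacobian is not what is happening. There is no Weyl integration over $G$ here; one is computing an orbital integral over $G_{\gamma_\varepsilon}\backslash G$. The $\varepsilon$-dependence does not disappear: it is carried by the terms $\xi_{-s(\Lambda(\sigma)+\delta_M)}(\gamma_\varepsilon)$, which together form the Weyl \emph{numerator} for $\chi_\sigma$ on $T_M$ (cf.\ Remark~\ref{inspirationpublic}, which shows the $-ie_1\nu$ part is invisible at $\gamma_\varepsilon\in T_M$). What Knapp's theorem yields, after the notational translation, is exactly (\ref{orbitint}) with a constant $C$ coming from the normalization of measures; no further manipulation with denominators is needed, and your intermediate formula with $\Theta_{\sigma,\nu}(\gamma_\varepsilon)$ and a prefactor $1/\Delta_B(\gamma_\varepsilon)$ introduces an extra $\varepsilon$-dependent factor that is not present in the actual statement.
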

\begin{proof}
Denote by $E_{i,j}$ the matrix in $\mathfrak{so}(1,2n+1)$ whose $(i,j)$-entry is 1 and the other entries are 0. Let
\begin{equation*}
\begin{gathered}
H_1 := E_{1,2} + E_{2,1},\\
H_j := i (E_{2j-1,2j} - E_{2j,2j-1}), \quad j = 2, \ldots, n+1.
\end{gathered}
\end{equation*}
Then 
\begin{equation}\label{abizzzyana}
\gamma_\varepsilon = \exp(\varepsilon_2 H_2 + \ldots + \varepsilon_{n+1} H_{n+1}).
\end{equation} 
In the notation of Subsection \ref{Sec9} we have  $\mathfrak{m} = \mathfrak{so}(2n)$, $\mathfrak{a} = \mathfrak{so}(1,1)$, 
$$ \mathfrak{a} = \R H_1,$$ 
and
$$ \mathfrak{b} = i \R H_2 + \ldots + i \R H_{n+1}$$
is the standard Cartan subalgebra of $\mathfrak{so}(2n)$. Moreover,
$ \mathfrak{h} := \mathfrak{a} \oplus \mathfrak{b}$
is a Cartan subalgebra of $\mathfrak{so}_0(1,2n+1)$. Define $e_i \in \mathfrak{h}_{\C}^*$ with $i = 1, \ldots, n+1$, by
\begin{equation}\label{makaronina}
 e_i(H_j) = \delta_{i,j}, \, 1\le i,j\le n+1.
\end{equation}
Then the sets of roots of $(\mathfrak{so}_0(1,2n+1)_\C, \mathfrak{h}_{\C})$ and of  $(\mathfrak{so}(2n)_{\C}, \mathfrak{b}_{\C})$ are given by
\begin{equation*}
\begin{gathered}
\Delta(\mathfrak{so}_0(1,2n+1)_\C, \mathfrak{h}_{\C}) = \{ \pm e_i \pm e_j, 1 \le i < j \le n+1\},\\
\Delta(\mathfrak{so}(2n)_{\C}, \mathfrak{b}_{\C}) = \{  \pm e_i \pm e_j, 2\le i < j \le n+1   \}.
\end{gathered}
\end{equation*}
We fix the positive systems of roots by 
\begin{equation*}
\begin{gathered}
\Delta^+(\mathfrak{so}_0(1,2n+1)_\C, \mathfrak{h}_{\C}) = \{  e_i \pm e_j, 1 \le i < j \le n+1\},\\
\Delta^+(\mathfrak{so}(2n)_{\C}, \mathfrak{b}_{\C}) = \{   e_i \pm e_j, 2\le i < j \le n+1   \}.
\end{gathered}
\end{equation*}
Denote by $W$ the Weyl group of $\Delta(\mathfrak{so}(2n)_{\C}, \mathfrak{b}_{\C})$. The half-sum of positive roots $\Delta^+(\mathfrak{so}(2n)_{\C}, \mathfrak{b}_{\C})$ equals 
\begin{equation}\label{halfsummy}
\delta_M = \sum_{j=2}^{n+1} \rho_j e_j,\quad \rho_j = n+1-j.
\end{equation}

Recall that $\widehat{SO(2n)}$ is the set of equivalence classes of irreducible finite-volume representations of $SO(2n)$, and for $\sigma \in \widehat{SO(2n)}$ let $\Lambda(\sigma)$ be its highest weight.
To an irreducible finite-dimensional representation of $SO_0(1,2n+1)$ with the highest weight $\Omega = k_1 e_1  + \ldots +  k_{n+1} e_{n+1}$, $k_i \in \frac{1}{2} \Z$, we can associate a character on a Cartan subgroup $\exp(\mathfrak{h})$ according to \cite[p. 84]{Kna}:
\begin{equation}\label{kukushechka}
\xi_{\Omega}(\exp(H)) = e^{\Omega(H)}.
\end{equation}
Substituting everything in \cite[Theorem 13.1]{Kna} proves Lemma \ref{neptune}.
\end{proof}
\begin{remark}\label{inspirationpublic}
For a representation with the highest weight $\Omega = i \lambda e_1 + k_2 e_2 + \ldots + k_{n+1} e_{n+1}$ with $\lambda \in \R$ and $\xi_\varepsilon$ as in (\ref{varepsilon})
$$\xi_{\Omega}(\gamma_\varepsilon) = e^{k_2 \varepsilon_2 + \ldots + k_{n+1} \varepsilon_{n+1}}$$ 
does not depend on $\lambda$.
\end{remark}
\begin{proof}
Follows from (\ref{abizzzyana}), (\ref{makaronina}) and (\ref{kukushechka}).
\end{proof}
Recall that the Killing form $B(X,Y)$ on $\mathfrak{so}(1,2n+1) \times \mathfrak{so}(1,2n+1)$ 
is defined by $B(X,Y) = \Tr(\ad(X) \circ \ad(Y))$. We define a symmetric bilinear form $\langle \cdot,\cdot\rangle$ on $\mathfrak{so}(1,2n+1)$ by
$$ \langle Y_1, Y_2 \rangle := \frac{1}{4n} B(Y_1, Y_2), \quad Y_1, Y_2 \in \mathfrak{so}(1,2n+1).$$
For $\alpha \in \Delta^+(\mathfrak{so}_0(1,2n+1)_\C, \mathfrak{h}_{\C})$ there exists a unique $H'_\alpha \in \mathfrak{so}_0(1,2n+1)_\C$ such that $B(H,H'_\alpha) = \alpha (H)$ for all $H \in \mathfrak{so}_0(1,2n+1)_\C$. One has $\alpha(H'_\alpha) \neq 0$. Denote
$$ H_\alpha := \frac{2}{\alpha(H'_\alpha)} H'_\alpha.$$
Note that $H_{\pm e_i \pm e_j} = \pm H_i \pm H_j.$ Without loss of generality assume that all $\varphi_i$ are different, then the stabilizer $G_\gamma$ of $\gamma$ is equal to $\T^k \times SO_0(1, 2k-1)$. The root system 
for $G_\gamma$ can be written as
$$ \Delta_\gamma(\mathfrak{g}_{ \C}, \mathfrak{h}_\C) = (\pm e_i \pm e_j, 1 \leqslant i < j \leqslant k ). $$
We can choose an ordering such that 
$$ \Delta^+_\gamma(\mathfrak{g}_{ \C}, \mathfrak{h}_\C) = (\pm (e_i + e_j), 1 \leqslant i < j \leqslant k ). $$
\begin{Lemma}\cite[(5.2)]{SW}\label{uranus}
There exists $M_\gamma \in \R \setminus \{0\}$ such that
$$E_\gamma(h_\varphi) = M_\gamma \cdot \lim_{\gamma_\varepsilon \to \gamma} \prod_{\alpha \in \Delta^+_\gamma} H_\alpha E_{\gamma_\varepsilon}(h_\varphi).$$
\end{Lemma}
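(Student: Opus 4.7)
The plan is to deduce this as an instance of the Harish-Chandra limit formula for orbital integrals at singular semisimple elements, following the method of \cite[(5.2)]{SW} which treats the even-dimensional hyperbolic case and adapting it to odd dimensions. By construction \eqref{varepsilon}, $\gamma_\varepsilon$ is a regular element of $G$ lying in the common Cartan subgroup $T = \exp(\mathfrak{h})$ and $\gamma_\varepsilon \to \gamma$; the positive roots of $\mathfrak{g}_\C$ that collapse at the limit are precisely those of the stabilizer, namely $\Delta_\gamma^+$.

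First, I would take the explicit formula for $E_{\gamma_\varepsilon}(h_\varphi)$ supplied by Lemma \ref{neptune}. The inner antisymmetric expression
$$ \sum_{s \in W} \det(s)\, \xi_{-s(\Lambda(\sigma)+\delta_M) - ie_1\nu}(\gamma_\varepsilon) $$
is a $W$-alternating function of the $\mathfrak{b}$-component of $\log \gamma_\varepsilon$, hence vanishes along each root hyperplane of $\Delta^+_\gamma$. Since $h_\varphi$ is $K$-finite (the sum over $\sigma$ is finite) and $\Theta_{\sigma,\nu}(h_\varphi)$ has Paley--Wiener decay in $\nu$, one may differentiate under the integral and interchange with the finite sum over $\sigma$.

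Next, I would apply the differential operator $\prod_{\alpha \in \Delta_\gamma^+} H_\alpha \in S(\mathfrak{h}_\C)$ to the expression and invoke the standard form of the Harish-Chandra limit formula: for any Cartan subgroup $T \subset G$, any singular $\gamma \in T$ and any regular approximating sequence $\gamma_\varepsilon \in T$, the distribution $E_{\gamma_\varepsilon}(h_\varphi)$ exhibits Weyl-denominator-type singularities along the roots of $G_\gamma$, and these are removed to leading order by $\prod_{\alpha \in \Delta_\gamma^+} H_\alpha$. The resulting limit exists, is finite, and agrees with $E_\gamma(h_\varphi)$ up to a normalization constant $M_\gamma \neq 0$ that depends only on the root system of $G_\gamma$, the choice of positive roots, and the normalization of the Haar measures on $G$, $G_\gamma$, and their quotient.

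The main obstacle is checking that the adaptation from $\HH^{2n}$ (handled in \cite{SW}) to $\HH^{2n+1}$ goes through cleanly, since the Cartan subalgebra structure, the Weyl group $W$, and the splittings $\mathfrak{h} = \mathfrak{a} \oplus \mathfrak{b}$ are different in odd dimensions; specifically, one must check that the root hyperplanes that become singular at $\gamma$ inside the compact factor $\mathfrak{b}$ are precisely those indexed by $\Delta_\gamma^+$, so that applying $\prod_{\alpha \in \Delta_\gamma^+} H_\alpha$ is the right operator. The explicit identification $\mathfrak{b} \cap \mathfrak{g}_\gamma = \{ H \in \mathfrak{b} : \alpha(H) = 0 \text{ for all } \alpha \in \Delta_\gamma\}$ and Remark \ref{inspirationpublic} (which shows that the $e_1$-direction plays no role in the differentiation, since $\xi_\Omega(\gamma_\varepsilon)$ does not depend on the $e_1$-component of $\Omega$) provide the needed compatibility. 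Once these checks are carried out, the constant $M_\gamma$ can in principle be recovered from the Weyl integration formula applied to the $\mathfrak{b}$-part of $\mathfrak{h}$, but its explicit value is not needed in the sequel as long as it is nonzero.
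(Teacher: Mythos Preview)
Your proposal is correct and aligns with the paper's treatment: the paper does not supply its own proof of this lemma but simply quotes it from \cite[(5.2)]{SW} as an instance of Harish-Chandra's limit formula for orbital integrals at singular semisimple elements. Your sketch accurately identifies the mechanism (the Weyl-denominator-type singularity of $E_{\gamma_\varepsilon}$ along the root hyperplanes of $G_\gamma$, removed by $\prod_{\alpha\in\Delta_\gamma^+}H_\alpha$) and the checks needed to carry the \cite{SW} argument from even to odd dimensions, which is more detail than the paper itself provides.
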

We are ready to prove the main theorem in this subsection:
\begin{Th}\label{saturn} There exists an even polynomial $P^\gamma_\sigma(i \nu)$ such that 
$$E_\gamma(h_\varphi) = \sum_{\sigma \in \widehat{SO(2n)}} \int_{\R} P^\gamma_\sigma (i \nu) \Theta_{\sigma,\nu} (h_\varphi) d \nu. $$
\end{Th}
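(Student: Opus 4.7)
The plan is to combine Lemma \ref{neptune} with Lemma \ref{uranus}, and then explicitly evaluate the action of the differential operator $\prod_{\alpha\in\Delta_\gamma^+}H_\alpha$ on the exponential expression supplied by Lemma \ref{neptune}. The key observation will be that the resulting scalar factor is a polynomial in the squares $a_1^2,\ldots,a_k^2$ with $a_1=-i\nu$, and is therefore manifestly even in $i\nu$.

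First I would extend the family $\gamma_\varepsilon$ to depend on an additional parameter $\varepsilon_1$ by setting $\gamma_\varepsilon:=\exp\bigl(\sum_{j=1}^{n+1}\varepsilon_j H_j\bigr)$; the original $\gamma_\varepsilon$ is recovered at $\varepsilon_1=0$, and this extension is needed in order for the derivatives along $H_1$ that appear in $\prod H_\alpha$ to make sense. Since the Weyl group $W$ of $M=SO(2n)$ acts trivially on $e_1$, we have $s(\Lambda(\sigma)+\delta_M)\in\mathrm{span}(e_2,\ldots,e_{n+1})$; combined with \eqref{kukushechka} this gives
\[
\xi_{-s(\Lambda(\sigma)+\delta_M)-ie_1\nu}(\gamma_\varepsilon)=e^{-i\nu\varepsilon_1}\cdot e^{\sum_{j=2}^{n+1}a_j(s,\sigma)\varepsilon_j},\qquad a_j(s,\sigma):=-\bigl(s(\Lambda(\sigma)+\delta_M)\bigr)_j.
\]
Identifying $\Delta_\gamma^+$ with the standard type-$D_k$ positive system $\{e_i\pm e_j:1\le i<j\le k\}$ for the centralizer $\mathfrak{g}_\gamma\supset\mathfrak{so}(1,2k-1)$, the differential operator factors pleasantly as
\[
\prod_{\alpha\in\Delta_\gamma^+}H_\alpha=\prod_{1\le i<j\le k}(H_i+H_j)(H_i-H_j)=\prod_{1\le i<j\le k}\bigl(\partial_{\varepsilon_i}^2-\partial_{\varepsilon_j}^2\bigr).
\]
Applied to the exponential above, and with the convention $a_1(s,\sigma):=-i\nu$, this produces the scalar prefactor
\[
\prod_{1\le i<j\le k}\bigl(a_i(s,\sigma)^2-a_j(s,\sigma)^2\bigr),
\]
which is a polynomial in $a_1^2=-\nu^2,\,a_2^2,\ldots,\,a_k^2$, and in particular an even polynomial in $i\nu$.

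Passing to the limit $\gamma_\varepsilon\to\gamma$ ($\varepsilon_j\to 0$ for $j\le k$ and $\varepsilon_j\to\varphi_j$ for $j>k$) collapses the remaining exponential to $e^{\sum_{j>k}a_j(s,\sigma)\varphi_j}$, which is independent of $\nu$. Combining Lemmas \ref{neptune} and \ref{uranus} and setting
\[
P_\sigma^\gamma(i\nu):=M_\gamma\,C\,\sum_{s\in W}\det(s)\prod_{1\le i<j\le k}\bigl(a_i(s,\sigma)^2-a_j(s,\sigma)^2\bigr)\cdot e^{\sum_{j>k}a_j(s,\sigma)\varphi_j}
\]
yields the claimed formula, with $P_\sigma^\gamma(i\nu)$ even in $i\nu$ since each summand in the $s$-sum is.

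The main obstacle will be to justify the interchange of the differential operator $\prod H_\alpha$ and of the limit $\gamma_\varepsilon\to\gamma$ with the $\nu$-integral and the $\sigma$-sum from Lemma \ref{neptune}. This should follow from dominated convergence, using that the $\sigma$-sum is finite by $K$-finiteness of $h_\varphi$ and that $\Theta_{\sigma,\nu}(h_\varphi)$ is rapidly decaying in $\nu$ (as $h_\varphi$ is smooth of compact support modulo $K$), so that the polynomial prefactors still yield absolutely convergent integrals uniformly in $\varepsilon$ in a neighbourhood of the limit point.
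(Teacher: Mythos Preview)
Your argument is correct and is essentially the paper's proof. Both proofs combine Lemma \ref{neptune} with Lemma \ref{uranus}, observe that applying $\prod_{\alpha\in\Delta_\gamma^+}H_\alpha$ to the exponential character $\xi_{-s(\Lambda(\sigma)+\delta_M)-ie_1\nu}$ produces the scalar eigenvalue $\prod_{1\le i<j\le k}(a_i^2-a_j^2)$ (with $a_1=-i\nu$), and conclude evenness in $i\nu$ from the fact that only $a_1^2=-\nu^2$ appears while the remaining exponential is $\nu$-independent (the paper records this last point as Remark~\ref{inspirationpublic}). Your explicit introduction of the extra parameter $\varepsilon_1$ is just a concrete way of differentiating along $H_1\in\mathfrak{a}$, which the paper does implicitly via the eigenvalue formula \eqref{hellokitty}; and your closing paragraph on interchanging $\prod H_\alpha$, the limit, and the $\nu$-integral makes explicit a justification the paper leaves tacit.
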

\begin{proof}
Theorem \ref{saturn} holds with 
\begin{equation}\label{willstdubisdertod}
P^\gamma_\sigma(i \nu) = \prod_{\alpha \in \Delta^+_\gamma} H_\alpha \left( \sum_{s \in W} \det (s)
\left( \xi_{-s(\Lambda(\sigma) + \delta_M)- i e_1 \nu}(\gamma_\varepsilon) \right) \right)
\end{equation}
by Lemmas  \ref{neptune} and \ref{uranus}. We need to show that $P^\gamma_\sigma(i \nu)$ is an even polynomial. Note that
\begin{equation}\label{hellokitty}
\begin{gathered}
\prod_{\alpha \in \Delta^+_\gamma} H_\alpha \left( \xi_{-s(\Lambda(\sigma) + \delta_M)- i e_1 \nu} \right)= 
\left\lbrace  \prod_{\alpha \in \Delta^+_\gamma} \langle-s(\Lambda + \delta_M) - i \nu e_1, \alpha\rangle \right\rbrace \left( \xi_{-s(\Lambda(\sigma) + \delta_M)- i e_1 \nu} \right).
\end{gathered}
\end{equation}
Let $s(\Lambda + \delta_M) = \sum_{2 \leqslant i \leqslant n+1} k_i e_i$ with $\delta_M$ as in (\ref{halfsummy}). Then
\begin{equation}\label{mars}
\begin{gathered}
\prod_{\alpha \in \Delta^+_\gamma} \langle -s(\Lambda + \delta_M) - i \nu e_{1}, \alpha \rangle =\\
(-1)^{|\Delta_\gamma^+|} \prod_{1 \leqslant i' < j' \leqslant k} \langle\sum_{2 \leqslant i \leqslant n+1} k_i e_i + i \nu e_{1}, e_{i'} - e_{j'}\rangle \cdot \langle\sum_{2 \leqslant i \leqslant n+1} k_i e_i + i \nu e_{1}, e_{i'} + e_{j'}\rangle =\\
(-1)^{|\Delta_\gamma^+|} \prod_{1 \leqslant i' < j' \leqslant k}  \left( i \nu (\delta_{i', 1} + \delta_{j', 1}) + (k_{i'}+k_{j'})  \right)\cdot  \left( i \nu (\delta_{i', 1} - \delta_{j',1}) + (k_{i'}-k_{j'})  \right) =\\
(-1)^{|\Delta_\gamma^+|} \prod_{1 \leqslant i' < k} \left(i \nu + k_{i'} \right) \cdot \left( -i \nu + k_{i'}  \right) =
(-1)^{|\Delta_\gamma^+|} \prod_{1 \leqslant i' < k} \left(\nu^2 + k_{i'}^2  \right)
\end{gathered}
\end{equation}
Note that (\ref{mars}) is an even polynomial in $\nu$ and by Remark \ref{inspirationpublic} the character $\xi_{-s(\Lambda(\sigma) + \delta_M)- i e_1 \nu}$ does not depend on $\nu$. Hence, (\ref{hellokitty}) and (\ref{willstdubisdertod}) are even polynomials in $\nu$ as well.  
\end{proof}

We would like to mention the resemblance of Theorem \ref{saturn} to the following:
\begin{Proposition}\cite[Theorem 13.2]{Kna} \label{plashka}
There exists an even polynomial $P_{\sigma'}(i \nu)$ such that
$$\tr h_\varphi(e) = \sum_{\sigma' \in \widehat{M}} \int_{\R} P_{\sigma'}(i \nu) \Theta_{\sigma, \nu} (h_\varphi) d \nu.$$
\end{Proposition}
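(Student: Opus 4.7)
The statement is cited from \cite[Theorem 13.2]{Kna}, so the argument amounts to invoking Harish-Chandra's Plancherel theorem for the rank-one real semisimple group $G = SO_0(1, 2n+1)$ and unpacking the Plancherel density. My plan is to first recall the Harish-Chandra inversion formula
\begin{equation*}
f(e) = \sum_{\sigma' \in \widehat{SO(2n)}} \int_\R \Theta_{\sigma', \nu}(f)\, \mu(\sigma', \nu)\, d\nu,
\end{equation*}
valid on the Harish-Chandra Schwartz space of $G$, and verify that $h_\varphi$ lies in this class (it is smooth, compactly supported, and of $K \times K$-bitype $\tau \otimes \tau$). Because $G$ has real rank one with $\mathrm{rk}(G) = \mathrm{rk}(K)$, discrete series summands would in principle appear; however, the $K$-bitype constraint on $h_\varphi$ combined with Frobenius reciprocity shows that only the unitary principal series $\pi_{\sigma', \nu}$ induced from $MAN$ can pair nontrivially with $h_\varphi$, which is why the discrete series does not enter.

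Next, I would substitute the explicit form of the Plancherel density, $\mu(\sigma', \nu) = c_G \cdot |c(\sigma', \nu)|^{-2}$, where $c$ is the Harish-Chandra $c$-function for the principal series of $G$. The key algebraic miracle is that for real hyperbolic spaces of \emph{odd} dimension, the $\Gamma$-function factors appearing in $|c(\sigma', \nu)|^{-2}$ telescope via the functional equation $\Gamma(z+1) = z\,\Gamma(z)$ into a finite product of linear factors in $i\nu$, producing a genuine polynomial $P_{\sigma'}(i\nu)$ (as opposed to a meromorphic or transcendental expression). Evenness of this polynomial is then immediate from the symmetry $\nu \mapsto -\nu$ induced by the nontrivial element of the little Weyl group $W(A, G)$: this symmetry exchanges $\pi_{\sigma', \nu}$ with $\pi_{\sigma', -\nu}$ and leaves both $\Theta_{\sigma', \nu}(h_\varphi)$ and $|c(\sigma', \nu)|^{-2}$ invariant, so only even powers of $i\nu$ survive.

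The main obstacle, in my view, is the polynomial nature of $|c(\sigma', \nu)|^{-2}$, which is a delicate feature specific to odd-dimensional hyperbolic spaces; for $\HH^{2n}$ the analogous computation does not collapse and one obtains extra $\tanh$ or $\coth$ contributions, reflected in the different treatment of \cite{SW} for even-dimensional hyperbolic space. A secondary technical point is ensuring that the summation over $\sigma' \in \widehat{SO(2n)}$ is effectively finite when applied to $h_\varphi$; this follows from the $K$-finiteness of $h_\varphi$ inherited from the $\tau$-bitype condition, so that only finitely many $M$-types of the principal series enter. Once these two points are established, the asserted decomposition and the evenness of $P_{\sigma'}$ follow from the explicit $c$-function formula.
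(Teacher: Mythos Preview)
The paper does not prove this proposition; it simply quotes it from \cite[Theorem~13.2]{Kna}, so there is no ``paper's own proof'' to compare against. Your outline is the standard route and is broadly correct: invoke the Plancherel formula for $G=SO_0(1,2n+1)$, then observe that in the odd-dimensional case the Plancherel density $|c(\sigma',\nu)|^{-2}$ collapses to an even polynomial in $\nu$.

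There is, however, a factual slip in your discussion of the discrete series. You write that $\mathrm{rk}(G)=\mathrm{rk}(K)$ and then argue that the discrete series summands are killed by the $K$-bitype constraint on $h_\varphi$. This is backwards. For $G=SO_0(1,2n+1)$ one has $K=SO(2n+1)$, and the complexified rank of $G$ is $n+1$ while the rank of $K$ is $n$; by Harish-Chandra's criterion $G$ has \emph{no} discrete series at all. That is the genuine reason no discrete series terms appear in the Plancherel expansion, and it is also the structural reason the Plancherel measure is polynomial rather than involving $\tanh$/$\coth$ factors. The contrast with $\HH^{2n}$ you mention goes the other way: there $G=SO_0(1,2n)$, $K=SO(2n)$, the ranks agree, discrete series exist, and the Plancherel density is not polynomial. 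Once you correct this, your sketch matches the argument behind \cite[Theorem~13.2]{Kna}.
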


For further use we need to show one property of the polynomial $P_\sigma^\gamma(i \nu)$. Let $M'$ be the normalizer of $A$ in $K$ and let $W(A)=M'/M$ be the restricted Weyl group. It has order 2 and acts on finite-dimensional representations of $M$ \cite[p.~18]{Pf}. Let $\sigma$ be a finite-dimensional representation of $M$  with the highest weight
\begin{equation}\label{verzauertwennihrblick}
\Lambda(\sigma) = \sum_{j=2}^{n+1} \lambda_j(\sigma) e_j,
\end{equation}
then the highest weight of a representation $w_0 \sigma$, where $w_0$ is the non-identity element of $W(A)$, equals
\begin{equation}\label{dichtrifft}
 \Lambda(w_0 \sigma) = \sum_{j=2}^{n} \lambda_j(\sigma) e_j - \lambda_{n+1}(\sigma) e_{n+1}.
\end{equation}
\begin{Lemma}\label{feuerundwasser}
The polynomial $P_\sigma^{\gamma}$ is invariant under the action of $W(A)$: $$P^\gamma_\sigma(i \nu) = P^\gamma_{w_0 \sigma}(i \nu).$$
\end{Lemma}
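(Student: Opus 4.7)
The plan is to show directly from the explicit formula \eqref{willstdubisdertod} that $P^\gamma_\sigma(i\nu)$ is invariant under $\Lambda(\sigma) \mapsto \Lambda(w_0 \sigma)$. Writing $\Omega := \Lambda(\sigma) + \delta_M$ and denoting by $\Omega_j$ its $e_j^*$-coefficient, I first observe from \eqref{verzauertwennihrblick}, \eqref{dichtrifft}, and $\rho_{n+1} = 0$ (see \eqref{halfsummy}) that $\Lambda(w_0 \sigma) + \delta_M = \tilde{w}_0 \Omega$, where $\tilde{w}_0$ is the linear map on $\mathfrak{b}_\C^*$ that negates the $e_{n+1}$-coordinate. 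Hence it suffices to prove that the function
\[
F(\Omega, \nu) := \sum_{s \in W} \det(s) \Bigl(\prod_{\alpha \in \Delta_\gamma^+} \langle -s\Omega - i\nu e_1, \alpha\rangle \Bigr) \xi_{-s\Omega - i\nu e_1}(\gamma)
\]
is even in $\Omega_{n+1}$.

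To this end, I would parametrize each $s \in W = W(D_n)$ as a pair $(\pi, \eta)$, where $\pi$ permutes $\{2, \ldots, n+1\}$ and $\eta = (\eta_j)_{j=2}^{n+1} \in \{\pm 1\}^n$ satisfies $\prod_j \eta_j = 1$, so that $(s\Omega)_j = \eta_j \Omega_{\pi(j)}$ and $\det(s) = \mathrm{sign}(\pi)$. The intermediate step \eqref{mars} already records two crucial facts: the prefactor $\prod_{\alpha \in \Delta_\gamma^+} \langle -s\Omega - i\nu e_1, \alpha\rangle$ depends only on the squares $\Omega_{\pi(j)}^2$ for $j = 2, \ldots, k$ and is in particular independent of $\eta_2, \ldots, \eta_k$; and the character factor equals $\xi_{-s\Omega - i\nu e_1}(\gamma) = \exp\bigl(-\sum_{i=k+1}^{n+1} \eta_i \Omega_{\pi(i)} \phi_i\bigr)$, depending linearly on the remaining signs through the exponent.

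The next step is to carry out the sum over $\eta$ subject to $\prod_j \eta_j = 1$. The geometric input $k \geq 2$ is essential here and is, I expect, the only nontrivial ingredient: for elliptic $\gamma \in SO_0(1, 2n+1)$, $\gamma$ fixes a timelike direction, and its restriction to the orthogonal spacelike complement lies in $SO(2n+1)$, which --- being odd-dimensional --- always has $+1$ as an eigenvalue, forcing $k \geq 2$. Given $k \geq 2$, for each choice of $(\eta_{k+1}, \ldots, \eta_{n+1})$ the constraint can be satisfied by exactly $2^{k-2}$ choices of $(\eta_2, \ldots, \eta_k)$, independently of the other signs; thus summing freely over $\eta_{k+1}, \ldots, \eta_{n+1}$ factors the exponential into $\cosh(\Omega_{\pi(i)} \phi_i)$'s, yielding
\[
F(\Omega, \nu) = 2^{n-1} \sum_{\pi} \mathrm{sign}(\pi) \, Q_\pi(\Omega, \nu) \prod_{i=k+1}^{n+1} \cosh(\Omega_{\pi(i)} \phi_i),
\]
where each $Q_\pi(\Omega, \nu)$ is a polynomial that is even in each $\Omega_{\pi(j)}$.

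Since $\cosh$ and $Q_\pi$ are even in each of their $\Omega$-arguments, $F(\Omega, \nu)$ is manifestly even in every coordinate $\Omega_j$, in particular in $\Omega_{n+1}$; this gives $F(\tilde{w}_0 \Omega, \nu) = F(\Omega, \nu)$ and hence $P^\gamma_\sigma(i\nu) = P^\gamma_{w_0\sigma}(i\nu)$, as desired. Apart from the geometric inequality $k \geq 2$, the argument is purely book-keeping: had $k = 1$ been possible, the sign constraint on $\eta$ would entangle the free signs with the constrained ones and produce both $\cosh$ and $\sinh$ contributions, at which point manifest $\Omega_{n+1}$-evenness would fail and a substantially different argument would be needed.
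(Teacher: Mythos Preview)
Your strategy---split the Weyl sum into a permutation part and a sign part, observe that the polynomial prefactor from \eqref{mars} is insensitive to the signs $\eta_2,\dots,\eta_k$, and then sum the character freely over $\eta_{k+1},\dots,\eta_{n+1}$ to produce $\cosh$'s---is more explicit than the paper's argument and correctly isolates the point where care is needed. The gap is the inequality $k\ge 2$. Your geometric justification (elliptic $\gamma$ fixes a timelike vector, and the residual action lies in $SO(2n+1)$, which by odd dimension has a further $+1$ eigenvalue) is correct but yields only \emph{two} $+1$ eigenvalues in the $(2n+2)$-dimensional representation. In the paper's normalisation the identity block of $\gamma$ has size $2k$: the rotations are indexed by $i=k+1,\dots,n+1$, and the top-left $2\times2$ block coming from the $H_1$-direction already accounts for two of those $2k$ ones. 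Thus ``two $+1$ eigenvalues'' gives only $k\ge 1$. Regular elliptic elements---all $\phi_i$ distinct and nonzero---have $k=1$, and nothing excludes such elements from $\Gamma$.

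For $k=1$ your own diagnosis applies verbatim: there are no ``free'' signs $\eta_2,\dots,\eta_k$ to absorb the parity constraint, and the $\cosh$-symmetrisation fails. Concretely, for $n=2$ and $k=1$ one has
\[
\sum_{s\in W}\det(s)\,\xi_{-s\Omega-i\nu e_1}(\gamma)=2\cosh(\Omega_2\phi_2+\Omega_3\phi_3)-2\cosh(\Omega_3\phi_2+\Omega_2\phi_3),
\]
which is visibly not even in $\Omega_3$. The paper's proof proceeds differently: it fixes $s$ and notes that passing from $\sigma$ to $w_0\sigma$ flips exactly one coordinate $k_i\mapsto -k_i$ of $s(\Lambda+\delta_M)$, then invokes the $k_i^2$-dependence of \eqref{mars}. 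That handles the polynomial prefactor; it is silent about the character factor $\xi$ when the flipped index satisfies $i>k$, so the regular case $k=1$ is not transparently covered there either. In any case, the specific step in your argument that does not go through is the claim $k\ge 2$.
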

\begin{proof}
Recall that $s \in W$ acts on the roots by even sign changes and the permutations. Then it follows from (\ref{verzauertwennihrblick}) and (\ref{dichtrifft}) that if $s(\Lambda + \delta_M) = \sum_{2 \leqslant i \leqslant n+1} k_i e_i$, then $s(\Lambda + \delta_M) = \sum_{2 \leqslant i \leqslant n+1} \hat{k}_i e_i$,
where $\hat{k}_i = - k_i$ for exactly one $i$ and $\hat{k}_j = k_j$ for all $j \neq i$. It follows that $\hat{k}_i^2 = k_i^2$. By (\ref{mars}) the polynomial $P_\sigma^\gamma(i \nu)$ depends only on $k_i^2$ which completes the proof of Lemma \ref{feuerundwasser}.
\end{proof}

\section{Selberg zeta function}\label{sec:SZF}

\begin{definition}\label{thermos}
The Selberg zeta function is:
$$ Z(s, \sigma, \chi) := \exp\left( - \sum_{\{ \gamma\} \, \text{hyperbolic}} \frac{\tr(\chi(\gamma)) \cdot v(\gamma) \cdot \Tr (\sigma(m_\gamma)    
\cdot e^{-(s+n) l(\gamma)})  }{n_{\Gamma'}(\gamma)    \det(\Id - \Ad(m_\gamma a_\gamma)|_{\mathfrak{n}})  }\right).$$
\end{definition}
\begin{remark}
Definition \ref{thermos} differs from  \cite{BO} by the term $v(\gamma)$, that equals $1$ for manifolds. 
\end{remark}

\begin{Proposition}\label{tarantellada}
 There exist $c>0$ such that $Z(s, \sigma)$ converges absolutely and locally uniformly for $\RRe(s)>c$.
\end{Proposition}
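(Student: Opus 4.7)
The plan is to bound the absolute value of each term in the series defining the exponent of $Z(s,\sigma,\chi)$ and then apply a counting estimate on the set of hyperbolic conjugacy classes to deduce convergence.

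First I would estimate each factor in the summand. Since $\sigma$ is a unitary representation of the compact group $SO(2n)$, we have $|\Tr\sigma(m_\gamma)| \leqslant \dim V_\sigma$. By the Selberg Lemma \ref{seelem}, $\Gamma$ contains a finite-index torsion-free normal subgroup $\Gamma'$, so $\Gamma'_\gamma$ has index at most $[\Gamma:\Gamma']$ inside $\Gamma_\gamma$, yielding the uniform bound $v(\gamma) \leqslant [\Gamma:\Gamma']$, while $n_{\Gamma'}(\gamma)\geqslant 1$. For the denominator, $\mathfrak{n}$ has dimension $2n$, and on it $\Ad(a_\gamma)$ acts as $e^{l(\gamma)}\Id$ while $m_\gamma \in SO(2n)$ acts with unimodular eigenvalues; hence every eigenvalue of $\Ad(m_\gamma a_\gamma)|_{\mathfrak{n}}$ has modulus $e^{l(\gamma)}$. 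Since $\Gamma$ is cocompact, the displacement length is uniformly bounded below by some $l_{\min}>0$ over hyperbolic $\gamma$, giving
$$
|\det(\Id - \Ad(m_\gamma a_\gamma)|_{\mathfrak{n}})| \geqslant (1-e^{-l_{\min}})^{2n}\,e^{2n l(\gamma)}.
$$

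The only factor requiring genuine work is $|\tr\chi(\gamma)|$, since $\chi$ is not assumed unitary. Here I would invoke the Schwarz--Milnor lemma: because $\Gamma$ is cocompact, the word length $|\gamma|$ relative to any fixed finite generating set is comparable, up to additive and multiplicative constants, to the translation length $l(\gamma)$. If $\chi$ sends each generator to a matrix of operator norm at most $C_0$, then $\|\chi(\gamma)\|\leqslant C_0^{|\gamma|}\leqslant C_1 e^{c_\chi l(\gamma)}$ for some $c_\chi>0$ depending on $\chi$, and hence $|\tr\chi(\gamma)|\leqslant \dim(V_\chi)\cdot C_1\cdot e^{c_\chi l(\gamma)}$.

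Combining these bounds, each term in the series is dominated in absolute value by a constant multiple of $e^{(c_\chi - \RRe(s) - 3n)\,l(\gamma)}$. The final ingredient is a counting estimate: by the volume growth of balls in $\HH^{2n+1}$ together with Schwarz--Milnor, the number of hyperbolic conjugacy classes with $l(\gamma)\leqslant T$ is bounded by $O(e^{2n T})$. A dyadic decomposition then yields
$$
\sum_{\{\gamma\}\text{ hyp}} e^{(c_\chi - \RRe(s) - 3n)\,l(\gamma)} \;\lesssim\; \sum_{T\geqslant l_{\min}} e^{(c_\chi - \RRe(s) - n)\,T},
$$
which converges as soon as $\RRe(s) > c_\chi - n$. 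Taking $c := \max\{1,\, c_\chi - n + 1\}$ gives absolute convergence for $\RRe(s) > c$, with uniformity on $\{\RRe(s)\geqslant c+\varepsilon\}$ for every $\varepsilon>0$, hence local uniformity on the stated half-plane. The principal obstacle is controlling $|\tr\chi(\gamma)|$ for non-unitary $\chi$; in the unitary case this is trivial, but here one must use that $\chi$ of a cocompact lattice grows at most exponentially in $l(\gamma)$, which is precisely what Schwarz--Milnor provides.
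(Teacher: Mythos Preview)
Your approach is essentially the same as the paper's, which simply cites \cite[Lemma~3.3]{Spil} for the exponential bound on $\tr\chi(\gamma)$ and then defers to \cite{BO} for the remainder; you have unpacked both of those references.

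There is one imprecision worth flagging. The Schwarz--Milnor lemma compares the word length $|\gamma|$ with the orbit displacement $d(x_0,\gamma x_0)$ for a fixed basepoint, not with the translation length $l(\gamma)=\inf_x d(x,\gamma x)$. These two quantities are \emph{not} comparable on all of $\Gamma$: conjugating $\gamma$ by a long element leaves $l(\gamma)$ fixed but can make $|\gamma|$ arbitrarily large. What saves you is that $\tr\chi(\gamma)$ is a class function, so it suffices to bound it for one representative of each conjugacy class. Since the axis of $\gamma$ meets some $\Gamma$-translate of a fixed compact fundamental domain $F$, after conjugating you may assume the axis passes through $F$, and then $d(x_0,\gamma x_0)\leqslant l(\gamma)+2\,\mathrm{diam}(F)$. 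With this adjustment your bound $|\tr\chi(\gamma)|\leqslant C_1 e^{c_\chi l(\gamma)}$ follows as stated, and the rest of your argument goes through.
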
\label{conviiii}
\begin{proof}
 Recall that $\Gamma'$ is of finite index in $\Gamma$, hence $v(\gamma) = \frac{\vol (\Gamma_\gamma \bs G_\gamma)}{\vol(\Gamma'_\gamma \bs G_\gamma)}$ is bounded in $\gamma$. On the other hand, by \cite[Lemma 3.3]{Spil} there exists $k,K \ge 0$ such that  
 $$ \tr(\chi(\gamma)) \le K e^{k l(\gamma)}.$$
The rest of the proof follows according to \cite{BO}. 
\end{proof}

\subsection{The symmetric Selberg zeta function.}\label{sec:symmsel}
Let $\sigma \in \widehat{SO(2n)}$. For $\RRe(s) > c$ with the constant $c$ as in Proposition \ref{tarantellada} we define the symmetric Selberg zeta function by 
\begin{equation}
    S(s, \sigma) = 
    \begin{cases}
      Z(s, \sigma) Z(s, w_0 \sigma), & \text{if}\ \sigma \neq w_0 \sigma; \\
      Z(s, \sigma) , & \text{if}\ \sigma = w_0 \sigma.
    \end{cases}
\end{equation}

In this subsection we prove the existence of the meromorphic continuation of the symmetric Selberg zeta function. We follow the approach of \cite{Pf} which associates a vector bundle $E(\sigma)$ to every representation $\sigma \in \widehat{SO(2n)}$. This vector bundle is graded  and there exists a canonical graded differential operator $A(\sigma)$ which acts 
on smooth sections of $E(\sigma)$. The next step is to apply the Selberg trace formula with a certain test function.

First, we construct the bundle $E(\sigma)$ and the operator $A(\sigma)$. By \cite[Prop.~2.12]{Pf}, there exist integers $m_\nu(\sigma) \in \{ -1, 0, 1\}$ such that for $\sigma = w_0 \sigma$ one has
\[
\sigma = \sum_{\nu \in \widehat{K}} m_\nu(\sigma) \iota^* \nu
\]
and for $\sigma \neq w_0 \sigma$ one has
\[
\sigma + w_0 \sigma = \sum_{\nu \in \widehat{K}} m_\nu(\sigma) \iota^* \nu.
\]
Above $\iota^*: R(K) \to R(M)$  the restriction map induced by the inclusion $\iota: M \hookrightarrow K$, where $R(K)$ and $R(M)$ are the representation rings over $\Z$ of $K$ and $M$, respectively. Moreover, $m_\nu(\sigma)$ are zero except for finitely many $\nu \in \widehat{K}$.  
Let $E_{\sigma', \rho}$ be the orbibundle associated to $\sigma' \in \widehat{K}$, $\rho: \Gamma \to GL(V)$ as in Subsection \ref{Sec9}.
Let $E(\sigma)$ be the orbibundle 
$$
E(\sigma) := \bigoplus_{\nu: \, m_\nu(\sigma) \neq 0} E_{\nu, \rho}.
$$
Note that $E(\sigma)$ has a grading $E(\sigma) = E^+ \oplus E^-$ defined by the sign of $m_\nu(\sigma)$.
For every $\nu \in \widehat{K}$ let $A_{\nu, \rho}$ be the operator defined by
\[
A_{\nu, \rho} := \Delta_{\nu, \rho}^\# + c(\sigma) - \tau(\Omega_K),
\]
where $\Delta_{\nu,\rho}^\#$ is as in Subsection \ref{Sec8}, $c(\sigma)$ is as in \cite[(2.27)]{Pf}, $\tau(\Omega_K)$ is as in (\ref{20}). Let $A(\sigma)$ be the operator acting on $C^\infty(\Ob, E(\sigma))$
defined by 
$$
A(\sigma)  := \bigoplus_{\nu: \, m_\nu(\sigma) \neq 0} A_{\nu,\rho}.
$$
Let $\tilde{E}(\sigma):=\bigoplus_{\nu:\, m_\nu(\sigma) \neq 0} \tilde{E}_{\nu, \rho}$ be the lift of $E(\sigma)$ to $\HH^{2n+1}$, and let $\tilde{A}(\sigma)$ be the lift of $A(\sigma)$  to $\tilde{E}(\sigma)$. Note that by (\ref{princesssweety})
$$ \tilde{A}(\sigma) = \bigoplus_{\nu: \, m_\nu(\sigma) \neq 0} \tilde{\Delta}_{\nu} + c(\sigma) - \tau(\Omega_k) \otimes \Id.$$
Together with (\ref{20}) it gives
$$ \tilde{A}(\sigma) = \bigoplus_{\nu: \, m_\nu(\sigma) \neq 0} -\Omega(R)+c(\sigma) \otimes \Id.$$
Second, we wish to apply the Selberg trace formula to $A(\sigma)$. For this let
\begin{equation}
h_t^\sigma(g) := \sum_{\nu: \, m_\nu(\sigma) \neq 0} m_\nu(\sigma) h_t^\nu (g),
\end{equation}
where $h_t^\nu:=\tr H_t^\nu$, and $H_t^\nu$ is the integral kernel of $e^{-t \tilde{\Delta}_\nu}$.
Proposition~\ref{propSTF}, Lemma~\ref{moyaaktrisa}, Theorem~\ref{saturn} and Proposition~\ref{plashka} imply:
\begin{Th}\label{supermassivblackhole}
The supertrace of $e^{-t A(\sigma)}$, taken with respect to the grading defined by the sign of $m_\nu(\sigma)$, equals
\begin{equation*}\label{functional_equation_selberg_zeta}
\begin{gathered}
\text{Tr}_s(e^{-t A(\sigma)}) = \vol(X) \dim(V_\chi)\sum_{\sigma' \in \hat{M}} \int_\R P_\sigma (i \lambda) \Theta_{\sigma', \lambda} (h_t^\sigma) d\lambda + \\
\sum_{\sigma' \in \hat{M}} \sum_{[\gamma] \, \textnormal{elliptic}} \vol(\Gamma_\gamma \bs G_\gamma) \tr (\chi(\gamma)) \sum_{\sigma' \in \hat{M}} \int_\R P^\gamma_\sigma(i \lambda) \Theta_{\sigma', \lambda} (h_t^\sigma) d \lambda +\\
 \sum_{\sigma' \in \hat{M}} \sum_{[\gamma] \, \textnormal{hyperbolic}}  \frac{\tr (\chi(\gamma)) \, v(\gamma)\, l(\gamma_0)}{2 \pi D(\gamma)} \overline{\tr(\sigma'(\gamma))}  \int_\R \Theta_{\sigma', \lambda} (h_t^\sigma) e ^{- l(\gamma) \lambda}d\lambda.
\end{gathered}
\end{equation*}
\end{Th}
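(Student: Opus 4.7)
The plan is to linearize the supertrace using the decomposition $E(\sigma)=\bigoplus_\nu E_{\nu,\rho}$ with signs $m_\nu(\sigma)$, apply the Selberg pre-trace formula (Proposition \ref{propSTF}) to each summand with a heat-kernel test function, and then reassemble the geometric side using the definition $h_t^\sigma=\sum_\nu m_\nu(\sigma)\,h_t^\nu$ together with the linearity of the characters $\Theta_{\sigma',\lambda}$ in the test function.

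The first step is purely formal. Since $A(\sigma)=\bigoplus_\nu A_{\nu,\rho}$ and the grading on $E(\sigma)$ is given by the sign of $m_\nu(\sigma)$, one has
$$
\Tr_s\bigl(e^{-tA(\sigma)}\bigr)=\sum_{\nu:\,m_\nu(\sigma)\ne 0} m_\nu(\sigma)\,\Tr\bigl(e^{-tA_{\nu,\rho}}\bigr),
$$
and each $e^{-tA_{\nu,\rho}}$ is a trace-class smoothing operator because $A_{\nu,\rho}$ differs from $\Delta_{\nu,\rho}^\#$ only by the scalar $c(\sigma)-\tau(\Omega_K)$ (cf. Theorem \ref{spectrBehaviour}). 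On the universal cover, the identity (\ref{20}) allows this scalar shift to be transferred into the test function, so the geometric side of the trace formula becomes expressible directly in terms of the pointwise trace $h_t^\nu$ of the $G$-invariant heat kernel of $\tilde{\Delta}_\nu$.

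The second step is to apply the pre-trace formula to each $\Tr(e^{-tA_{\nu,\rho}})$. Proposition \ref{propSTF} is stated for Paley-Wiener test functions, whereas $\lambda\mapsto e^{-t\lambda^2}$ is not of this class; this is the main technical obstacle. I would resolve it by approximating $e^{-t\lambda^2}$ by an even sequence in $\mathcal{P}(\C)$ and passing to the limit on both sides, using the Weyl law (Theorem \ref{darthveider}) to control the spectral side and the growth bound $|\tr\chi(\gamma)|\leqslant K e^{k l(\gamma)}$ from \cite[Lemma 3.3]{Spil} together with Gaussian decay of the heat kernel to control the geometric side (this is the same convergence argument as in the proof of Proposition \ref{tarantellada}). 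With the extended trace formula in hand, the identity contribution is rewritten via Proposition \ref{plashka}, the elliptic orbital integrals via Theorem \ref{saturn}, and the hyperbolic orbital integrals via Lemma \ref{moyaaktrisa}.

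The third step is reassembly. Each geometric term on the right-hand side depends on $h_t^\nu$ only through the character $\Theta_{\sigma',\lambda}(h_t^\nu)$, integrated against either a polynomial in $\lambda$ (identity and elliptic) or an exponential weight $e^{-il(\gamma)\lambda}$ (hyperbolic). Multiplying by $m_\nu(\sigma)$, summing over $\nu$, and invoking
$$
\sum_{\nu:\,m_\nu(\sigma)\ne 0} m_\nu(\sigma)\,\Theta_{\sigma',\lambda}(h_t^\nu)=\Theta_{\sigma',\lambda}(h_t^\sigma)
$$
by linearity of the character in its argument collapses the $\nu$-sum and produces exactly the three displayed summands. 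Aside from the Paley-Wiener-to-heat-kernel extension of Proposition \ref{propSTF}, the remaining work is bookkeeping of ingredients already in place.
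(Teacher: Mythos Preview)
Your proposal is correct and follows exactly the route the paper takes: the paper's entire proof is the single clause ``Proposition~\ref{propSTF}, Lemma~\ref{moyaaktrisa}, Theorem~\ref{saturn} and Proposition~\ref{plashka} imply,'' and you have spelled out precisely how these four ingredients combine via the decomposition $A(\sigma)=\bigoplus_\nu A_{\nu,\rho}$ and the linearity of $\Theta_{\sigma',\lambda}$ in $h_t^\sigma=\sum_\nu m_\nu(\sigma)h_t^\nu$. You are in fact more careful than the paper in flagging the Paley-Wiener-to-heat-kernel extension, which the paper leaves implicit.
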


\begin{Lemma}\cite[Section 4]{MP}\label{aerozolka}
$\Theta_{\sigma', \lambda}(h_t^\sigma) = e^{-t \lambda^2}$ for $\sigma' \in \{ \sigma, w_0 \sigma\}$ and equals zero otherwise.
\end{Lemma}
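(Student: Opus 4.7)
The plan is to evaluate $\Theta_{\sigma',\lambda}$ termwise on the defining sum $h_t^\sigma = \sum_\nu m_\nu(\sigma)\,h_t^\nu$ and then collapse the result using the multiplicity identity from \cite[Prop.~2.12]{Pf}. By linearity,
\[
\Theta_{\sigma',\lambda}(h_t^\sigma) = \sum_{\nu \in \widehat{K}} m_\nu(\sigma)\,\Theta_{\sigma',\lambda}(h_t^\nu),
\]
so the whole computation reduces to the single-$K$-type pieces $\Theta_{\sigma',\lambda}(h_t^\nu)$.

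For each $\nu$, the key observation is that $h_t^\nu = \tr H_t^\nu$ is the scalar trace of the $\nu$-spherical heat kernel of $\tilde{\Delta}_\nu$, hence the convolution operator $\pi_{\sigma',\lambda}(h_t^\nu)$ annihilates every $K$-isotypic subspace of the representation space of $\pi_{\sigma',\lambda}$ except the $\nu$-isotypic one, on which it acts as a scalar. Its operator trace therefore equals the multiplicity of $\nu$ in $\pi_{\sigma',\lambda}|_K$ times this scalar. By Frobenius reciprocity the multiplicity equals $[\nu|_M : \sigma']$. The scalar itself is computed by combining the identity $\tilde{\Delta}_\nu = -R(\Omega) + \nu(\Omega_K)\,\Id$ from (\ref{20}) with the standard Casimir formula $\pi_{\sigma',\lambda}(\Omega) = \sigma'(\Omega_M) - (\lambda^2 + \|\rho\|^2)$. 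With the normalisations of \cite{MP,Pf} (which is where the shift constant $c(\sigma)$ from Subsection~\ref{sec:symmsel} implicitly enters), this scalar simplifies to $e^{-t\lambda^2}$, so
\[
\Theta_{\sigma',\lambda}(h_t^\nu) = [\nu|_M : \sigma'] \cdot e^{-t\lambda^2}.
\]

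Summing against $m_\nu(\sigma)$ and invoking \cite[Prop.~2.12]{Pf}, which gives $\sum_\nu m_\nu(\sigma)\,\iota^*\nu = \sigma + w_0\sigma$ (respectively $=\sigma$ if $\sigma = w_0\sigma$), one obtains
\[
\sum_\nu m_\nu(\sigma)\,[\nu|_M : \sigma'] = [\sigma + w_0\sigma : \sigma']_M,
\]
which equals $1$ when $\sigma' \in \{\sigma, w_0\sigma\}$ and vanishes otherwise. Combining this with the single-$K$-type computation proves the claim.

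The main technical obstacle lies in the single-$K$-type step: one has to verify that the a priori $\nu$-dependent shifts coming from $\nu(\Omega_K)$, $\sigma'(\Omega_M)$ and $\|\rho\|^2$ actually cancel with the shift constant $c(\sigma)$ to leave only $-t\lambda^2$ in the exponent. This is precisely the coincidence (relying on the Weyl-invariance $\sigma'(\Omega_M) = \sigma(\Omega_M)$ for $\sigma' \in \{\sigma,w_0\sigma\}$ and on the relation between $\nu(\Omega_K)$ and $c(\sigma)$ imposed by $m_\nu(\sigma) \neq 0$) that motivated the particular choice of $c(\sigma)$ in the definition of the auxiliary operator $A(\sigma)$.
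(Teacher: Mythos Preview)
The paper does not prove this lemma itself; it simply cites \cite[Section~4]{MP}, and your outline is precisely the argument given there. One imprecision is worth flagging. Your displayed single-$K$-type formula
\[
\Theta_{\sigma',\lambda}(h_t^\nu) \;=\; [\nu|_M : \sigma']\cdot e^{-t\lambda^2}
\]
is not literally correct for arbitrary $\sigma'$. Taking $h_t^\nu$ to be the kernel of $e^{-t\tilde A_\nu}$, i.e.\ of $-R(\Omega)+c(\sigma)$ (this is the convention in \cite{MP,Pf}; the line ``integral kernel of $e^{-t\tilde\Delta_\nu}$'' in the present paper appears to be a slip, since otherwise the lemma is false as stated), the scalar one actually obtains is
\[
e^{-t\bigl(\lambda^2 + c(\sigma) - c(\sigma')\bigr)}.
\]
The key observation is that this scalar is independent of $\nu$, so it factors out of $\sum_\nu m_\nu(\sigma)\,[\nu|_M:\sigma']$. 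That sum vanishes for $\sigma'\notin\{\sigma,w_0\sigma\}$ by \cite[Prop.~2.12]{Pf}, and when $\sigma'\in\{\sigma,w_0\sigma\}$ one has $c(\sigma')=c(\sigma)$, so the exponent collapses to $-t\lambda^2$. Your final paragraph gestures at this but obscures it by invoking a ``relation between $\nu(\Omega_K)$ and $c(\sigma)$ imposed by $m_\nu(\sigma)\neq 0$''; no such relation is needed or used, since the $\nu(\Omega_K)$ term is eliminated \emph{by definition} of $A_\nu$.
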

Let

\begin{equation}\label{zara}
    \epsilon(\sigma) = 
    \begin{cases}
      2, & \text{if}\ \sigma \neq w_0 \sigma; \\
      1 , & \text{if}\ \sigma = w_0 \sigma.
    \end{cases}
\end{equation}
Denote
\begin{equation}\label{sosprach}
\begin{gathered}
I(t) := \epsilon(\sigma) \dim (V_\chi) \vol(\Ob) \int_{\R} P_{\sigma}(i \lambda) e^{-t \lambda^2} dt,\\
E(t) := \epsilon(\sigma) \sum_{\{\gamma\}  \, \text{elliptic}} \tr (\chi(\gamma)) \vol(\Gamma_\gamma \bs G_\gamma) \int_{\R} P^\gamma_\sigma(i \lambda) e^{-t \lambda^2} dt,\\
H(t) := popozzhe opredelyu.
\end{gathered}
\end{equation}
Then Lemma~\ref{feuerundwasser}, Theorem \ref{supermassivblackhole}, Lemma \ref{aerozolka} together with (\ref{zara}) and (\ref{sosprach}) imply
\begin{equation}
\text{Tr}_s (e^{-t A(\sigma)}) = I(t) + H(t) + E(t).
\end{equation}
Denote $(A(\sigma)+s^2)^{-1} = R(s^2)$. Note that
$$R(s^2)=\int_o^\infty e^{-ts^2}e^{-t A(\sigma)} dt.$$
The operator $R(s^2)$ is not a trace class operator, but we will now improve it. 
\begin{Lemma}\cite[Lemma 3.5]{BO}\label{sorgensorgen}
Let $s_1, \ldots , s_N \in \C$  such that 
$s_i^2 \neq s_j^2$ for $i \neq j$. Then for every 
$z \in \C \setminus \{ -s_1^2, \ldots, -s_N^2 \}$ one has
\begin{equation*}
\sum_{i=1}^N \frac{1}{s_i^2 + z} \prod_{j = 1, j\neq i}^{N} \frac{1}{s_j^2 - s_i^2}= \prod_{i=1}^N \frac{1}{s_i^2+z},
\end{equation*}
hence, for $c_i = \prod_{j = 1, j\neq i}^{N} \frac{1}{s_j^2 - s_i^2}$, $i=1, \ldots, N,$ 
\begin{equation}\label{yoyouarepseudo}
\sum_{j=1}^N c_j R(s_j^2) = \prod_{j=1}^N R(s_j^2).
\end{equation}
\end{Lemma}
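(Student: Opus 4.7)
The assertion reduces to a scalar partial-fractions identity together with the algebraic resolvent equation, so there is no analytic obstacle beyond knowing that each $R(s_j^2)$ exists as a bounded operator.

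\textbf{Step 1: The scalar identity.} Setting $w_i := s_i^2$ and multiplying the claimed equality through by $\prod_{i=1}^N (w_i+z)$, I am reduced to proving
\begin{equation*}
\sum_{i=1}^N c_i \prod_{j\neq i}(w_j+z) \;=\; 1, \qquad c_i = \prod_{j\neq i}\frac{1}{w_j-w_i}.
\end{equation*}
The left-hand side is a polynomial in $z$ of degree at most $N-1$. Evaluating at $z=-w_k$ kills every term except $i=k$, and the surviving term is $c_k \prod_{j\neq k}(w_j-w_k)=1$. A polynomial of degree $\le N-1$ taking the value $1$ at the $N$ distinct points $-w_1,\dots,-w_N$ is identically $1$, proving the scalar identity.

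\textbf{Step 2: Passage to operators.} All resolvents $R(s_j^2)=(A(\sigma)+s_j^2)^{-1}$ are resolvents of the same operator, so they mutually commute on their common domain by the algebraic resolvent equation
\begin{equation*}
R(a)R(b) \;=\; \frac{R(a)-R(b)}{b-a}, \qquad a\neq b.
\end{equation*}
I would then prove $\prod_{j=1}^N R(s_j^2) = \sum_{j=1}^N c_j R(s_j^2)$ by induction on $N$: the case $N=2$ is exactly the resolvent equation with $c_1=(s_2^2-s_1^2)^{-1}$, $c_2=(s_1^2-s_2^2)^{-1}$. For the inductive step, multiply the $N-1$ version by $R(s_N^2)$ and apply the resolvent equation to each product $R(s_j^2)R(s_N^2)$; regrouping terms and using the scalar partial-fractions identity from Step~1 (applied to the new coefficients) yields the desired coefficients $c_j$ for $N$ points. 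Alternatively, and more cleanly, I would apply the scalar identity from Step~1 via the (holomorphic) functional calculus for $A(\sigma)$, valid since $-s_j^2\notin\spec(A(\sigma))$ by hypothesis.

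\textbf{Main obstacle.} There is essentially none of substance: the identity is algebraic. The only point that deserves care is to ensure that the commutativity and the functional-calculus step are legitimate for the non-selfadjoint operator $A(\sigma)$; this is handled either by working purely with the algebraic resolvent equation (which requires nothing beyond the existence of each $R(s_j^2)$ as a bounded operator, guaranteed by Lemma \ref{lemma123456} and Theorem \ref{spectrBehaviour}) or by invoking the Dunford functional calculus on a contour surrounding $\spec(A(\sigma))$ and avoiding the points $-s_j^2$.
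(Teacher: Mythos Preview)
Your proof is correct. The scalar partial-fractions argument in Step~1 is standard and complete, and your Step~2 correctly notes that the operator identity follows either by induction via the first resolvent equation or by the Dunford holomorphic functional calculus; either route is valid for the non-selfadjoint $A(\sigma)$.

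The paper itself does not supply a proof of this lemma: it is stated with a citation to \cite[Lemma~3.5]{BO} and no argument is given. (The proof that appears immediately after in the paper belongs to the \emph{next} lemma, on the trace-class property of $\prod_j R(s_j^2)$.) So there is nothing to compare against beyond noting that your argument is presumably close to what \cite{BO} does, since the identity is purely algebraic and admits essentially one proof.
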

\begin{Lemma}
The operator $\prod_{j=1}^N R(s_j^2)$ is of trace class.
\end{Lemma}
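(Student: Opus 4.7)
The plan is to identify $\prod_{j=1}^N R(s_j^2)$ as a pseudodifferential operator of sufficiently negative order on the compact orbifold $\Ob$, and then invoke the standard trace-class criterion from the Sobolev theory developed in Section~\ref{sectionPDE}. Implicit here is that $N$ is taken large enough (specifically $2N > \dim \Ob = 2n+1$, so $N \geqslant n+1$); for smaller $N$ the product is merely compact.

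First I would check that for each $s_j$ with $-s_j^2 \notin \spec(A(\sigma))$, the resolvent $R(s_j^2) = (A(\sigma) + s_j^2)^{-1}$ is a classical pseudodifferential operator of order $-2$. Since $A(\sigma)$ is a second-order elliptic operator on the orbibundle $E(\sigma) \to \Ob$, one constructs a parametrix $Q(s_j^2)$ of order $-2$ by the usual symbol calculus in each orbifold chart applied to the $G_U$-invariant lift of $A(\sigma) + s_j^2$, then patches via a partition of unity. The error $(A(\sigma) + s_j^2) Q(s_j^2) - \Id$ is smoothing, hence $R(s_j^2) = Q(s_j^2) + \text{(smoothing)}$ is PDO of order $-2$ in the orbifold sense of Section~\ref{sectionPDE}.

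Next I would apply the composition property of pseudodifferential operators on orbibundles to obtain that $\prod_{j=1}^N R(s_j^2)$ is a PDO of order $-2N$. Equivalently, by Theorem~\ref{ShuEst}, this product maps $H^s(\Ob, E(\sigma))$ continuously into $H^{s+2N}(\Ob, E(\sigma))$ for every $s \in \R$. For $2N > \dim \Ob$, the Sobolev embedding together with the Kondrachov--Rellich theorem implies that such an operator is trace class: indeed, factoring the product as $R(s_1^2) \circ \bigl[\prod_{j=2}^N R(s_j^2)\bigr]$, the second factor lies in a Schatten class $\mathcal{S}^p$ for some finite $p$ (by the Weyl-type bound on the singular values of PDOs of order $<-\dim\Ob/p$, which in turn follows from the Weyl law Theorem~\ref{darthveider} applied to a reference Bochner Laplacian $\Delta_{E(\sigma)}$ whose eigenvalues grow like $k^{2/(2n+1)}$), and $R(s_1^2)$ lies in a complementary Schatten class so that Hölder's inequality for Schatten norms places the product in $\mathcal{S}^1$.

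The main obstacle is justifying the trace-class criterion and Schatten-class singular value estimates in the orbifold setting rather than the smooth manifold setting. This is handled exactly as described in Section~\ref{sectionPDE}: sections over an orbifold chart are $G_U$-invariant sections over the cover, the Sobolev norms transfer with the factor $1/|G_U|$, and all local estimates for PDOs on manifolds restrict verbatim to $G_U$-invariant subspaces; patching via a finite orbifold atlas and a partition of unity then yields the global estimate on $\Ob$. With the Schatten estimates in place, the conclusion that $\prod_{j=1}^N R(s_j^2) \in \mathcal{S}^1$ is routine.
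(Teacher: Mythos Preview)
Your argument is correct and is essentially the route the paper attributes to \cite{BO}: each resolvent is a PDO of order $-2$, the product is of order $-2N$, and for $2N>2n+1$ the Schatten-class estimates (via the Weyl law for a self-adjoint reference Bochner Laplacian on $E(\sigma)$) give trace class. The paper, however, deliberately avoids this direct route because the only Weyl law proved in the text (Theorem~\ref{darthveider}) is for the non-selfadjoint $A(\sigma)$; it therefore factors $\prod_{j=1}^N R(s_j^2) = R(s_1^2)^N \cdot \bigl(R(s_1^2)^{-N}\prod_{j=1}^N R(s_j^2)\bigr)$, uses Theorem~\ref{darthveider} for $A(\sigma)$ to obtain that the eigenvalues of $R(s_1^2)^N$ decay like $k^{-2N/(2n+1)}$ and hence that $R(s_1^2)^N$ is trace class, and observes that the second factor is a PDO of order $0$, hence bounded. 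Your version is a bit cleaner conceptually, since by passing to a self-adjoint reference operator you sidestep the eigenvalue-versus-singular-value issue for the non-normal $R(s_1^2)^N$; the paper's version has the virtue of using only the Weyl law actually established in the paper.
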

\begin{proof}
In \cite{BO} Lemma \ref{sorgensorgen} was proven for manifolds by the following argument: each of the factors is a pseudodifferential operator of order $-2/(2n+1)$, hence their product is a pseudodifferential operator of order $-2 N / (2n+1)$ that is of trace class for sufficiently large $N$ by the Weyl law. In our setting we have established the Weyl law only for a non-selfadjoint Laplacian, so we need to make some additional effort. 
\begin{Lemma}
The operator $R(s_1^2)^{N}$ is of trace class for  $N > (2n+1)/2$.
\end{Lemma}
\begin{proof}
By Theorem \ref{darthveider}, the $k$-th eigenvalue of $R(s_1^2)$ has order $k^{-2/(2n+1)}$, hence the $k$-th eigenvalue of  $R(s_1^2)^{N}$ has order $k^{-2N/(2n+1)}$.
\end{proof}
\begin{Lemma}
The operator $R(s_1^2)^{-N}\cdot \prod_{j=1}^N R(s_j^2)$ is bounded.
\end{Lemma}
\begin{proof}
It is a pseudodifferential operator of order 0.
\end{proof}
By the above two lemmas,
$$ \prod_{j=1}^N R(s_j^2) = R(s_1^2)^{N} \cdot \left( R(s_1^2)^{-N}\cdot \prod_{j=1}^N R(s_j^2)\right)$$
is of trace class.
\end{proof}
Then
\begin{equation}\label{expr1sd}
\Trs \left(  R(s^2)+\sum_{j=1}^N c_j R(c_j^2)\right) =\int_0^\infty \left(e^{-ts^2} + \sum_{j=1}^{N} c_j e^{-ts_j^2} \right)\cdot \Trs (e^{-t A(\sigma)}) dt.
\end{equation}
We would like to apply Theorem \ref{supermassivblackhole} to the right hand side. By analogy with  \cite{Pf},
\begin{equation} 
\begin{gathered}
\int_0^\infty \left(e^{-ts^2} + \sum_j c_j e^{-ts_j^2}\right) \cdot H(t) dt = \frac{1}{2s} \frac{S'(s,\sigma)}{S(s,\sigma)} + \sum_j \frac{c_j}{2 s_j} \frac{S'(s_j,\sigma)}{S(s_j,\sigma)}, \\
\int_0^\infty \left(e^{-ts^2} + \sum_j c_j e^{-ts_j^2}\right)\cdot I(t) dt = \epsilon(\sigma) \vol(X) \dim(V_\chi) \cdot \left( \frac{\pi}{s} P_\sigma(s) + \sum_j \frac{c_j \pi}{s_j} P_\sigma (s_j) \right),\\
\int_0^\infty \left(e^{-ts^2} + \sum_j c_j e^{-ts_j^2}\right)\cdot E(t) dt = \sum_{ \{\gamma\} \, \text{elliptic}}\epsilon(\sigma) \vol(\Gamma_\gamma \bs G_\gamma) \tr (\chi(\gamma))\cdot \left(  \frac{\pi}{s} P_{\sigma}^\gamma (s) + \sum_j \frac{c_j \pi}{s_j} P_{\sigma}^\gamma (s_j) \right).
\end{gathered}
\end{equation}
Note that we are crucially using that $P_{\sigma}^\gamma(\nu)$ and $P_\sigma(\nu)$ are even polynomials in $\nu$. Thus we get
\begin{equation*}\label{preDF}
\begin{gathered}
\Trs\big(R(s^2)+\sum_j c_j R(c_j^2)\big) = \frac{1}{2s} \frac{S'(s,\sigma)}{S(s,\sigma)} + \sum_j \frac{c_j}{2 s_j} \frac{S'(s_j,\sigma)}{S(s_j,\sigma)}  +\\  \epsilon(\sigma) \dim(V_\chi) \vol(\Ob) \cdot \left(\frac{\pi}{s} P_\sigma(s) + \sum_j \frac{c_j \pi}{s_j} P_\sigma (s_j) \right)+\\
 \sum_{ \{\gamma\} \text{ elliptic}}\epsilon(\sigma) \vol(\Gamma_\gamma \bs G_\gamma) \tr (\chi(\gamma)) \cdot \left(  \frac{\pi}{s} P_{\sigma}^\gamma (s) + \sum_j \frac{c_j \pi}{s_j} P_{\sigma)}^\gamma (s_j) \right).
\end{gathered}
\end{equation*}
Put 
\begin{equation}\label{zachemuhodit}
\Xi(s,\sigma) = \exp\left(- 2 \pi \epsilon(\sigma)\dim(V_\chi) \vol(X) \int_0^s P_\sigma(r) dr - 2 \epsilon(\sigma)  \sum_{\{ \gamma \} \text{ elliptic}} \tr (\chi(\gamma)) \int_0^s P^\gamma_{\sigma} (r) dr\right) \cdot S(s, \sigma)
\end{equation}
Then (\ref{preDF}) can be rewritten as 
\begin{equation}\label{vnoch}
\Trs \left( R(s^2)+\sum_{j=1}^N c_j R(c_j^2)  \right) = \frac{1}{2s} \frac{\Xi'(s,\sigma)}{\Xi(s,\sigma)}+\sum_{j=1}^N \frac{c_j}{2s_j}\frac{\Xi'(s_j,\sigma)}{\Xi(s_j, \sigma)}.
\end{equation}
From (\ref{zachemuhodit}) and (\ref{vnoch}) one can deduce the existence of the meromorphic extension of $S(s,\sigma)$ and determine the location of its singularities. Let $\lambda_1 < \lambda_2 < \ldots$ be the eigenvalues
of $A(\sigma)$. For each $\lambda_j$ let $\mathcal{E}(\lambda_j)$ be the eigenspace of $A(\sigma)$ with eigenvalue $\lambda_j$. Put
$$ m_s(\lambda_j, \sigma)=\dim_{gr} \mathcal{E}(\lambda_j),$$
where by $\dim_{gr}$ we denote the draded dimension.
If $\lambda_j <0$, we choose the square root $\sqrt{\lambda_j}$ which has positive imaginary part. Put
$$s_j^\pm = \pm i \sqrt{\lambda_j},  j \in \N.$$
Note that by  (\ref{zachemuhodit}) the symmetric Selberg zeta function $S(s,\sigma)$ admits a meromorphic extension to~$\C$ if and only if $\Xi(s,\sigma)$ does. In order to prove that $\Xi(s,\sigma)$ admits a meromorphic extension to $\C$ it is sufficient to show that all  residues of $2s \cdot \Trs\left( R(s^2)+\sum_{j=1}^N c_j R(c_j^2)  \right)$ are integers. 
\begin{Th}
The symmetrised Selberg zeta function  $S(\sigma,s)$ has a meromorphic extension to $\C$. The set of singularities of $S(s,\sigma)$ equals $\{ s^{\pm}_j: j \in \N \}$. If $\lambda_j \neq 0$, then the order
of $S(s,\sigma)$ at both $s_j^+$ and $s_j^-$ is equal to $m_s(\lambda, \sigma)$. The order of the singularity at $s=0$ is $2m_s(0,\sigma)$.
\end{Th}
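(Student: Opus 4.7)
The plan is to read off the theorem from identity (\ref{vnoch}) by interpreting both sides as meromorphic functions of $s \in \C$ and comparing their residues at each $s_j^{\pm}$. First, the left-hand side $\Trs(R(s^2) + \sum_{j=1}^N c_j R(s_j^2))$ is meromorphic on all of $\C$: the resolvent $R(s^2) = (A(\sigma) + s^2)^{-1}$ is meromorphic by analytic Fredholm theory (compactness of the resolvent is part of Theorem~\ref{spectrBehaviour}), with singularities only at $s = s_j^{\pm}$, and the combination appearing in (\ref{yoyouarepseudo}) is of trace class by the Weyl law (Theorem~\ref{darthveider}), so its supertrace defines a meromorphic function on $\C$ whose possible poles lie in $\{s_j^{\pm}: j \in \N\} \cup \{0\}$.

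Next I would compute these residues. Let $P_{\lambda_j}$ denote the spectral projection of $A(\sigma)$ onto the generalized eigenspace $\mathcal{E}(\lambda_j)$ from (\ref{spectralDecomposition}); since $A(\sigma)$ commutes with the grading, so does $P_{\lambda_j}$, and $\Trs P_{\lambda_j} = m_s(\lambda_j, \sigma)$. On $\mathcal{E}(\lambda_j)$ the operator $A(\sigma) - \lambda_j$ is nilpotent, and expanding $R(s^2) P_{\lambda_j} = \sum_{k \ge 0} (-1)^k (A(\sigma) - \lambda_j)^k P_{\lambda_j}/(s^2 + \lambda_j)^{k+1}$, all terms with $k \ge 1$ have vanishing supertrace. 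Hence
\begin{equation*}
\Trs\bigl(R(s^2) P_{\lambda_j}\bigr) = \frac{m_s(\lambda_j, \sigma)}{(s - s_j^+)(s - s_j^-)},
\end{equation*}
which for $\lambda_j \ne 0$ yields $\mathrm{Res}_{s = s_j^{\pm}} 2s \Trs R(s^2) = m_s(\lambda_j, \sigma)$, and for $\lambda_j = 0$ produces a double pole with $\mathrm{Res}_{s = 0} 2s \Trs R(s^2) = 2 m_s(0, \sigma)$. The correction terms $c_j R(s_j^2)$ do not depend on $s$ and contribute no residue.

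Finally I would compare with the right-hand side of (\ref{vnoch}). On the half-plane $\RRe(s) > c$ where $S(s, \sigma)$ converges (Proposition~\ref{tarantellada}), $\Xi(s, \sigma)$ is holomorphic and nonzero, and (\ref{vnoch}) identifies $2s \Trs(R(s^2) + \sum_j c_j R(s_j^2))$ with $\Xi'(s, \sigma)/\Xi(s, \sigma)$ up to a term linear in $s$ coming from the constants. Because all residues of this logarithmic derivative are integers, it is the logarithmic derivative of a single-valued meromorphic function on $\C$; this function agrees with $\Xi(s, \sigma)$ on the half-plane of convergence and gives its meromorphic continuation, with orders of zero or pole at $s_j^{\pm}$ (respectively at $s=0$) equal to the residues computed above. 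By (\ref{zachemuhodit}), $S(s, \sigma)$ differs from $\Xi(s, \sigma)$ only by an entire, nowhere-vanishing exponential factor (the integrals in the prefactor are polynomials in $s$), so the meromorphic extension and singularity data transfer verbatim to $S(s, \sigma)$. The main obstacle will be the non-selfadjointness of $A(\sigma)$: one has to verify that the generalized eigenspace decomposition (\ref{spectralDecomposition}) is compatible with the grading, that each $\mathcal{E}(\lambda_j)$ is finite-dimensional, and that the nilpotent contributions in the resolvent expansion cancel in the supertrace, so that the residues are genuinely integers as claimed.
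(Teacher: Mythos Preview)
Your proposal is correct and follows essentially the same route as the paper: deduce from (\ref{vnoch}) that the meromorphic continuation of $\Xi(s,\sigma)$ (and hence of $S(s,\sigma)$, via (\ref{zachemuhodit})) reduces to showing that the residues of $2s\,\Trs\bigl(R(s^2)+\sum_j c_j R(s_j^2)\bigr)$ are integers, and then identify these residues with $m_s(\lambda_j,\sigma)$ (resp.\ $2m_s(0,\sigma)$). You have in fact supplied more detail than the paper does, in particular the resolvent expansion on each generalized eigenspace and the observation that, since $A(\sigma)=\bigoplus_\nu A_{\nu,\rho}$ is block-diagonal with respect to the grading, the nilpotent contributions $(A(\sigma)-\lambda_j)^k P_{\lambda_j}$ for $k\ge 1$ have vanishing supertrace; the paper leaves this implicit.
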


\subsection{Antisymmetric Selberg zeta function.}\label{sec:aszf}

Suppose that $\sigma \neq w_0 \sigma$, otherwise the  symmetric Selberg zeta function equals the Selberg zeta function and this section can be skipped.
For $\RRe(s) > c$ with the constant $c$ as in Proposition \ref{tarantellada} we define the antisymmetric Selberg zeta function as 
\begin{equation}\label{ultramassive}
    S_a(s, \sigma) :=  Z(s, \sigma) / Z(s, w_0 \sigma).
\end{equation}
In this subsection we prove the meromorphic continuation of antisymmetric Selberg zeta function $S_a(s, \sigma)$. 
For this we need some additional constructions. We introduce  Dirac operators $\tilde{D}(\sigma)$ and $D$ on $\HH^{2n+1}$. 
They are $G$-invariance and hence descend to $D(\sigma)$ on the orbifold $\Ob$.
Recall that in the clasical case (unitary twists on manifolds) the antisymmetric Selberg zeta function $S_a(s, \sigma)$ 
can be studied with the help of Selberg trace formula. Namely, the hyperbolic contribution to $\Trs D e^{- t D^2}$
equals the logarithmic derivative of $S_a(s, \sigma)$. 
We cannot apply Selberg trace formula in the exact form as it is stated in Proposition \ref{propSTF}  to study $\Tr D e^{- t D^2}$, because $D(\sigma) e^{- t D^2(\sigma)}$ cannot be obtained as an even function of $D(\sigma)$. Therefore we need a modified version of the Selberg trace formula.

\subsection*{Dirac bundles and twisted Dirac operators}

Let $\Cl(\mathfrak{p})$ be the Clifford algebra of $\mathfrak{p}$ with respect to the scalar product on $\mathfrak{p}$. Let $\Cl(\HH^{2n+1}) := G\times_{\Ad} \Cl(\mathfrak{p})$  be the Clifford bundle over $\HH^{2n+1}$.
Let $\tilde{S} = G \times_\kappa \Delta^{2n}$ be the spinor bundle on $\HH^{2n+1}$. We denote by $c: \Cl(\mathfrak{p})\otimes \Delta^{2n}~\to~\Delta^{2n}$, $X\otimes v \mapsto   c(X) v$ the Clifford multiplication.
This multiplication induces naturally a Clifford multiplication $\Cl(\HH^{2n+1}) \otimes \tilde{S} \to \tilde{S}$. Since $M$ centralizes $\mathfrak{a}$, there isx $\epsilon \in \{ \pm 1\}$ such that $\epsilon c(H_1)$
acts on the spaces $\Delta_\pm^{2n}$ with eigenvalues $\mp i$.

Let $\tilde{E}(\sigma)$ be the vector bundle over $\HH^{2n+1}$, as in Section \ref{sec:symmsel}. Note that $\nu(\sigma) \otimes \kappa = \nu^+(\sigma) \oplus \nu^-(\sigma)$ and $\tilde{E}(\sigma) = \tilde{E}_{\nu^+(\sigma)} \oplus \tilde{E}_{\nu^-(\sigma)}$ with $\nu(\sigma) \in \hat{K}$, $\kappa$ is the spin-representation of $K$ as in \cite[{Proposition 1.1}]{BO}.
Together with $\tilde{S} = \tilde{E}_\kappa$ it gives a splitting $\tilde{E}(\sigma) = \tilde{E}_{\nu(\sigma)} \otimes \tilde{S}$.
Let $\Cl(\HH^{2n+1})$ act on $\tilde{E}(\sigma)$ by tensoring the trivial action of $\Cl(\HH^{2n+1})$ on $\tilde{E}_{\nu(\sigma)}$ with the action of $\Cl(\HH^{2n+1})$
just defined.

Together with the metrics and connections chosen as in \cite[p. 24]{Pf1}, $\tilde{E}(\sigma)$ with the Clifford action defined above
becomes a Dirac bundle in the sense of \cite[Chapter II, Definition 5.2]{LM}. Thus we can introduce the associated Dirac operator $\tilde{D}(\sigma)$ acting on sections of $\tilde{E}(\sigma)$.
The construction is $G$-invariant, so the operator $\tilde{D}(\sigma)$ descends to the new operator $D(\sigma)$ on $\Ob$, that can be expressed as
\begin{equation}\label{def:tdo}
 D(\sigma) = \sum_i e_i \cdot \nabla^E_{e_i}.
\end{equation}
For more details on the construction we refer to \cite{BO} or \cite{Pf}. Analogously to the previous sections,
consider the orbibundle $F \to \Ob$ with the flat connection $\nabla^F$ associated with the representation $\rho: \Gamma \to V$. Recall the following fact:
\begin{Proposition}\cite[Chapter II, Proposition 5.10]{LM}\label{pr:orbtw}
Let $E$ be any Dirac bundle over a Riemannian manifold X. Suppose $F$ is any Riemannian bundle with connection. Then the tensor 
product $E \otimes F$ is again a Dirac bundle over $X$.
\end{Proposition}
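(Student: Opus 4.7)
The plan is to verify the three defining data of a Dirac bundle in the sense of \cite[Chapter II, Definition 5.2]{LM}, namely a Hermitian metric, a compatible connection, and a Clifford multiplication by $\Cl(TX)$, and then check the two axioms (skew-adjointness of Clifford multiplication by unit vectors, and the derivation identity relating the connection and the Clifford action) on $E \otimes F$ by reducing them to the corresponding axioms on the factors.

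First I would equip $E \otimes F$ with the tensor product Hermitian metric and the tensor product connection
$$\nabla^{E\otimes F} := \nabla^E \otimes \Id_F + \Id_E \otimes \nabla^F,$$
which is automatically metric-compatible since both $\nabla^E$ and $\nabla^F$ are. Second, I would define the Clifford action to act on the $E$-factor only: for $X \in TX$ and simple tensors $e \otimes f$, put
$$c^{E\otimes F}(X)(e \otimes f) := (c^E(X) e) \otimes f,$$
extended by linearity to all of $\Cl(TX) \otimes (E \otimes F)$. The Clifford relation $c(X) c(Y) + c(Y) c(X) = -2 g(X,Y)\Id$ on $E \otimes F$ follows at once from the same relation on $E$ by tensoring with $\Id_F$, and skew-adjointness of $c^{E \otimes F}(X)$ for unit $X$ transfers directly from $E$ because the tensor product of a skew-adjoint operator with $\Id_F$ is skew-adjoint with respect to the tensor product metric.

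The remaining step is to verify the compatibility of $\nabla^{E\otimes F}$ with $c^{E\otimes F}$, namely
$$\nabla^{E \otimes F}_Y \bigl(c^{E\otimes F}(X) \psi\bigr) = c^{E\otimes F}(\nabla^{LC}_Y X)\, \psi + c^{E\otimes F}(X)\, \nabla^{E\otimes F}_Y \psi$$
for all vector fields $X, Y$ on $X$ and all $\psi \in \Gamma(E \otimes F)$. By $\C$-linearity it suffices to check this on simple tensors $\psi = e \otimes f$. Expanding both sides using the Leibniz rule for the tensor product connection, and using that $c^{E\otimes F}$ does not touch the $F$-factor, the identity reduces to the corresponding derivation identity for $(E, \nabla^E, c^E)$ on the $e$ slot plus a trivial $\nabla^F f$ term that appears identically on both sides.

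The argument is essentially formal and the only obstacle is keeping the bookkeeping of the Leibniz rule straight; no further geometric input is required, and in particular the statement does not depend on the specific Dirac bundle $\tilde E(\sigma)$ constructed earlier in this section, so the proposition applies directly to twisting by the flat orbibundle $F$ associated with $\rho\colon \Gamma \to GL(V)$.
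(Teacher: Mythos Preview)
Your argument is correct and is exactly the standard verification from \cite[Chapter II, Proposition 5.10]{LM}. Note, however, that the paper does not supply its own proof of this proposition: it is quoted as a known result from Lawson--Michelsohn and used as a black box (with the remark that the same construction goes through for orbibundles), so there is nothing in the paper to compare your proof against beyond the citation itself.
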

Note that Proposition \ref{pr:orbtw} is valid for orbifolds. Define a Dirac operator $D$ on $E(\sigma)\otimes F$:
\begin{equation}\label{def:ttdo}D := \sum_i e_i \cdot \nabla^{E(\sigma) \otimes F}_{e_i},\end{equation}
where $\nabla^{E \otimes F} = 1 \otimes \nabla^F + \nabla^{E(\sigma)} \otimes 1$. Note that $D^2$ a second order elliptic differential operator and by a version of Theorem \ref{ShuEst}, its spectrum is discrete and there exist $R \in \R$ and $\varepsilon > 0$ such that  
\begin{equation}\label{contourmakaka}
\spec(D^2) \in L := \Lambda_{[-\varepsilon,\varepsilon]} \cup B(R).
\end{equation}

\subsection*{Selberg trace formula}\label{sec:fa}
In this subsection we verify that the Selberg trace formula can be applied to the operator $D e^{-t D^2}$. We define the operator $De^{-tD^2}$
via the integral
\begin{equation}\label{eq:edirac}
D e^{-t D^2} := \frac{i}{2 \pi}\int_B e^{-t \lambda} D (D^2 - \lambda)^{-1} d \lambda
\end{equation}
for $B = \partial L$ with $L$ as in (\ref{contourmakaka}). 
\begin{Proposition}
The right hand side of (\ref{eq:edirac}) converges.
\end{Proposition}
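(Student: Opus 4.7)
The plan is to split the contour $B = \bd L$ into its bounded piece (a portion of $\bd B_R(0)$) and the two unbounded rays $\lambda = re^{\pm i\varepsilon}$, $r \geqslant R$, and to estimate the integral separately on each piece. On the bounded piece, after possibly enlarging $R$ or shrinking $\varepsilon$ so that Theorem \ref{spectrBehaviour} places $\spec(D^2)$ strictly inside $L$, the resolvent $(D^2-\lambda)^{-1}$ exists and depends continuously on $\lambda$ in the operator norm, so the contribution of this arc is immediately finite.

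For the unbounded rays I would decouple the two sources of control. The scalar factor $|e^{-t\lambda}| = e^{-tr\cos\varepsilon}$ decays exponentially along the rays, since $\varepsilon < \pi/2$. For the operator factor I appeal to the parameter-dependent pseudodifferential calculus of Theorem \ref{ShuEst}. The principal symbol of $D^2 - \lambda$ is $|\xi|^2 \cdot \Id - \lambda$, which is invertible for every $\lambda \notin [0,\infty)$ and, in particular, along the rays of $B$; hence $(D^2-\lambda)^{-1}$ lies in the parameter class $L^{-2}_{2}(\Ob, E(\sigma)\otimes F; \lambda)$, and composing with the first-order differential operator $D$ places $D(D^2-\lambda)^{-1}$ in $L^{-1}_{2}(\Ob, E(\sigma)\otimes F; \lambda)$. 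Theorem \ref{ShuEst} with $m=-1$, $d=2$, $l=0$ then yields
$$ \|D(D^2-\lambda)^{-1}\|_{0,0} \leqslant C (1 + |\lambda|^{1/2})^{-1}. $$
Combining this with the exponential decay above, the norm of the contribution from the unbounded rays is bounded by a constant multiple of $\int_R^\infty e^{-tr\cos\varepsilon}(1 + r^{1/2})^{-1}\, dr$, which is finite for every $t > 0$.

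The main obstacle I anticipate is essentially bookkeeping: one must check that the parameter-ellipticity of $D^2-\lambda$ holds uniformly in the full sector complementary to $\Lambda_{[-\varepsilon,\varepsilon]}$ (not merely on the positive real axis), and that the resulting Shubin-type estimate descends to the orbibundle $E(\sigma)\otimes F$ via passage to $\Gamma$-invariant sections on uniformizing charts. Since Section \ref{sec:orb} has already set up pseudodifferential operators, Sobolev spaces and Theorem \ref{ShuEst} in the orbifold category, no new analytic difficulty arises once parameter-ellipticity is verified; the exponential decay of $e^{-t\lambda}$ then dominates any residual polynomial growth, and the integral converges in the operator norm.
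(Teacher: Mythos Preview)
Your argument is correct and follows the same route as the paper, which simply writes ``Follows from Theorem~\ref{spectrBehaviour}.'' You have merely supplied the details that the one-line proof suppresses: split the contour into a compact arc and two rays, use continuity on the arc, and on the rays combine the exponential decay of $e^{-t\lambda}$ with a Shubin-type resolvent bound. The only cosmetic difference is that you invoke Theorem~\ref{ShuEst} to control $D(D^2-\lambda)^{-1}$ as an element of $L^{-1}_2$, whereas the paper points to Theorem~\ref{spectrBehaviour}; since the latter is essentially a corollary of the former and by itself only bounds $(D^2-\lambda)^{-1}$ rather than $D(D^2-\lambda)^{-1}$, your citation is in fact the more accurate one.
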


\begin{proof}
Follows from Theorem~\ref{spectrBehaviour}.
\end{proof}
Note that the lift of $D$ to $\HH^{2n+1}$ splits into $\widetilde{D}(\sigma) \otimes \Id$ by the same arguments as in Sections~\ref{Sec8}~and~\ref{sec:symmsel}, where $\widetilde{D}(\sigma)$ is a lift of $D(\sigma)$. Also the operator $D e^{-t D}$ is an integral operator with smooth kernel, because $e^{-tD}$ is. By an analogy with the previous calculations we obtain:
\begin{Lemma}
Denote by $k_t^\sigma(\cdot)$
the convolution kernel of $\widetilde{D}(\sigma) e^{-t \widetilde{D}^2(\sigma) } $. Then we have
\begin{equation}\label{eq:preSel}
\sum_{\lambda \in \spec(D)} \lambda e^{- t \lambda^2} = \dim(V_\chi) \vol (\Gamma \backslash S) \tr k_t^\sigma(e) + \\
 \sum_{\{\gamma\} \neq \{e\} } \tr \chi(\gamma) \vol (\Gamma_\gamma \backslash G_\gamma)  \int_{G_\gamma \backslash G} \tr k_t^\sigma(g^{-1} \gamma g) d\dot{g}.
\end{equation}
\end{Lemma}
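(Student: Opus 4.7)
The plan is to adapt the argument used for the pre-trace formula in Proposition \ref{propSTF} to the new operator $D e^{-tD^2}$, treating the roles of $\varphi(\widetilde{\Delta}_\tau^{1/2})$ and its convolution kernel $h_\varphi$ as played now by $D e^{-tD^2}$ and its convolution kernel $k_t^\sigma$. The overall structure is: establish that $D e^{-tD^2}$ is trace class with a smooth kernel, identify the spectral side via Lidskii's theorem, and then unfold the geometric side by lifting to $\HH^{2n+1}$ and collecting terms by $\Gamma$-conjugacy class.

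First I would show that $D e^{-tD^2}$ is of trace class with smooth integral kernel $K_t$ on $\Ob \times \Ob$. Writing $D e^{-tD^2} = (D e^{-tD^2/2}) \circ e^{-tD^2/2}$, each factor is Hilbert--Schmidt: $e^{-tD^2/2}$ by the heat kernel theory of Section \ref{seq2} applied to the generalized Laplacian $D^2$, and $D e^{-tD^2/2}$ by the same argument together with Theorem \ref{ShuEst}, since differentiation by $D$ costs only one Sobolev order while the heat semigroup gains any desired amount of smoothing. The spectral side is then obtained by applying Lidskii's theorem: the spectral decomposition of $D^2$ into finite-dimensional generalized eigenspaces $V_k$ with eigenvalues $\lambda_k^2$ (furnished by Lemma \ref{lemma123456} applied to $D^2$) is preserved by $D e^{-tD^2}$, so its trace equals $\sum_{\lambda \in \spec(D)} \lambda e^{-t\lambda^2}$ with algebraic multiplicities, matching the left-hand side of (\ref{eq:preSel}).

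For the geometric side I would use the splitting $\widetilde{D} = \widetilde{D}(\sigma) \otimes \Id$ (analogous to (\ref{princesssweety}) and the discussion in Sections \ref{Sec8} and \ref{sec:symmsel}), which immediately implies that the convolution kernel of the lifted operator $\widetilde{D}(\sigma) e^{-t\widetilde{D}(\sigma)^2}$ on $\HH^{2n+1}$ is precisely $k_t^\sigma$, and that the kernel of $D e^{-tD^2}$ on $\Ob$ is
$$K_t(x,y) = \sum_{\gamma \in \Gamma} k_t^\sigma(\tilde{x}, \gamma \tilde{y}) \circ (R_\gamma \otimes \chi(\gamma)),$$
by the same unfolding argument that produced the formula before Lemma \ref{PRETRAAACE}. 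Integrating $\tr K_t(x,x)$ over a fundamental domain $F$, separating the term $\gamma = e$ and reorganizing the remaining terms by $\Gamma$-conjugacy class (using the $G$-invariance of $k_t^\sigma$ to replace $\int_{\Gamma_\gamma \bs G}$ by $\vol(\Gamma_\gamma \bs G_\gamma) \int_{G_\gamma \bs G}$) yields the right-hand side of (\ref{eq:preSel}).

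The main obstacle I expect is the trace class property for the non-selfadjoint operator $D$: since $\chi$ is not unitary, $D$ is not symmetric, so ordinary spectral theorem arguments do not apply and one must rely on the contour integral definition (\ref{eq:edirac}) together with the resolvent estimates of Theorem \ref{spectrBehaviour} to justify the smoothness of the kernel and the identification of the contour-integral trace with a sum over eigenvalues of $D$ (as opposed to $D^2$). A second, more technical point is verifying that on each generalized eigenspace $V_k$ of $D^2$, the operator $D$ itself acts with eigenvalues $\pm \lambda_k$ in such a way that the sum $\sum \lambda e^{-t\lambda^2}$ is an unambiguous absolutely convergent expression; this requires invoking the discreteness and vertical-strip confinement from Lemma \ref{lemma123456} applied to $D^2$, together with the fact that $D^2$ commutes with $D$, to split $V_k$ according to the two square roots.
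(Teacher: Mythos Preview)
Your proposal is correct and follows essentially the same approach as the paper. The paper does not give an explicit proof; it simply establishes the two preliminaries you identify---the splitting $\widetilde{D}=\widetilde{D}(\sigma)\otimes\Id$ on $\HH^{2n+1}$ and the smoothness of the integral kernel of $D e^{-tD^2}$---and then states the lemma ``by an analogy with the previous calculations,'' meaning exactly the chain Lemma~\ref{quantifiableconnection} $\to$ Lemma~\ref{PRETRAAACE} $\to$ Proposition~\ref{propSTF} that you have spelled out.
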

As an analogue of Theorem \ref{supermassivblackhole} we get
\begin{equation}\label{eq:sptrtrtrt}
\begin{gathered}
\text{Tr}_s(D e^{-t D^2}) = \vol(X)\dim(V_\chi) \sum_{\sigma' \in \hat{M}} \int_\R P_\sigma (i \lambda) \Theta_{\sigma', \lambda} (k_t^\sigma) d\lambda + \\
\sum_{\sigma' \in \hat{M}} \sum_{[\gamma] \text{ elliptic}} \vol(\Gamma_\gamma \bs G_\gamma) \tr (\chi(\gamma)) \sum_{\sigma' \in \hat{M}} \int_\R P^\gamma_\sigma(i \lambda) \Theta_{\sigma', \lambda} (k_t^\sigma) d \lambda +\\
 \sum_{\sigma' \in \hat{M}} \sum_{[\gamma] \text{ hyperbolic}}  \frac{\tr (\chi(\gamma)) \, v(\gamma)\, l(\gamma_0)}{2 \pi D(\gamma)} \overline{\tr(\sigma'(\gamma))}  \int_\R \Theta_{\sigma', \lambda} (k_t^\sigma) e ^{- l(\gamma) \lambda}d\lambda.
\end{gathered}
\end{equation}

\begin{Proposition}\label{pr:vanish}\cite[Proposition 8.2]{Pf1}, \cite{MS}
Let $\sigma \in \hat{M}$, $k_{n+1}(\sigma)>0$. Then for $\lambda \in \R$ one has
$$ \Theta_{\sigma, \lambda} (k) = (-1)^n \lambda e^{-t\lambda^2}, \quad \Theta_{w_0 \sigma, \lambda} (k) = (-1)^{n+1} \lambda e^{-t\lambda^2}.$$
Moreover, if $\sigma' \in \hat{M}, \sigma' \neq \{ \sigma, w_0 \sigma\},$ for every $\lambda \in \R$ one has $\Theta_{ \sigma', \lambda}(k) = 0.$
\end{Proposition}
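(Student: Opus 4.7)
The plan is to reduce this to a direct calculation on the principal series $\pi_{\sigma',\lambda}$, following the approach of \cite{Pf1, MS}, and to verify that their argument transfers verbatim to our setting since the Dirac operator $\widetilde{D}(\sigma)$ has been constructed identically on the universal cover $\HH^{2n+1}$.

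First I would exploit the $G$-invariance of $\widetilde{D}(\sigma)$ and the splitting $\widetilde{E}(\sigma) = \widetilde{E}_{\nu(\sigma)} \otimes \widetilde{S}$. Because $\widetilde{D}(\sigma)$ is $G$-invariant, the convolution by $k_t^\sigma$ acts on each principal series $\pi_{\sigma',\lambda}$ by a scalar-valued operator on the $K$-finite vectors, and by the Parthasarathy-type identity together with the choice of $c(\sigma)$ in Subsection \ref{sec:symmsel}, we have $\widetilde{D}(\sigma)^2$ acting as $\lambda^2$ on the relevant $K$-isotypic components of $\pi_{\sigma',\lambda}$. Hence
\[
\Theta_{\sigma',\lambda}(k_t^\sigma) = e^{-t\lambda^2}\cdot \Tr\bigl(\pi_{\sigma',\lambda}(\widetilde{D}(\sigma))\bigr),
\]
and the proposition reduces to evaluating the trace of the Dirac operator on the principal series.

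Next I would invoke Frobenius reciprocity: the multiplicity of $\sigma'$ in $\pi_{\sigma',\lambda}|_K$ is non-zero only through the $K$-types $\nu^\pm(\sigma)$ appearing in $\widetilde{E}(\sigma)$. Restricting to $M$, one has the well-known decomposition $\kappa|_M = \Delta^{2n}_+ \oplus \Delta^{2n}_-$ of the spinor representation, and the $M$-type content of $\nu(\sigma)\otimes\kappa$ is precisely $\sigma \oplus w_0\sigma$ (this is where the hypothesis $k_{n+1}(\sigma)>0$ is used to separate $\sigma$ from $w_0\sigma$). Consequently $\Theta_{\sigma',\lambda}(k_t^\sigma)$ vanishes unless $\sigma' \in \{\sigma, w_0\sigma\}$.

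Finally I would compute the scalar by which $\widetilde{D}(\sigma)$ acts on the $\sigma$- and $w_0\sigma$-isotypic parts. The Dirac operator on the principal series is governed, in the non-compact direction, by the Clifford action of $H_1$, which by construction has eigenvalues $\mp i$ on $\Delta^{2n}_\pm$; combined with the infinitesimal character $i\lambda$ of $\pi_{\sigma',\lambda}$ along $\mathfrak{a}$, this yields exactly the values $(-1)^n\lambda$ and $(-1)^{n+1}\lambda$ on $\sigma$ and $w_0\sigma$ respectively. The relative sign reflects that interchanging the two half-spinors changes the sign of $c(H_1)$, and the overall sign $(-1)^n$ is fixed by the orientation convention.

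The main obstacle is bookkeeping the signs and $M$-weights carefully; this is a purely Lie-theoretic computation carried out in detail in \cite[Proposition 8.2]{Pf1} and \cite{MS}. Since our $\widetilde{D}(\sigma)$ coincides on the universal cover with the operator analysed there (the flat twist by $F$ and the orbifold quotient do not affect the convolution kernel $k_t^\sigma$ on $G$), the statement follows without modification.
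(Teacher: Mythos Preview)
The paper does not supply its own proof of this proposition: it is stated with a citation to \cite[Proposition~8.2]{Pf1} and \cite{MS} and used as a black box. Your observation that the kernel $k_t^\sigma$ is the convolution kernel of $\widetilde{D}(\sigma)e^{-t\widetilde{D}(\sigma)^2}$ on $G$, and is therefore unchanged by the passage to the orbifold quotient or by the non-unitary twist $\chi$, is exactly the reason the cited results carry over without modification; your sketch of the underlying computation (Frobenius reciprocity for the $K$-types $\nu^\pm(\sigma)$, the branching $\nu(\sigma)\otimes\kappa|_M=\sigma\oplus w_0\sigma$ under the hypothesis $k_{n+1}(\sigma)>0$, and the eigenvalues of the Clifford action of $H_1$) is a faithful outline of the argument in those references.
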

Applying Proposition \ref{pr:vanish} to (\ref{eq:sptrtrtrt}), we get 
\begin{equation}\label{bananana}
\begin{gathered}
(-1)^n \Trs(D e^{-t D^2}) = \vol(X) \dim(V_\chi)\int_\R (P_\sigma (i \lambda) - P_{w_0 \sigma} (i \lambda)) \lambda e^{- t\lambda^2}d\lambda + \\  + \sum_{[\gamma] \text{ elliptic}}   \vol(\Gamma_\gamma \bs G_\gamma)  \int_\R (P^\gamma_\sigma(i \lambda) - P^\gamma_{w_0 \sigma}(i \lambda)) \lambda e^{- t\lambda^2} d \lambda +\\
+  \sum_{[\gamma] \text{ hyperbolic}} C_2(\gamma) \frac{l(\gamma_0)}{2 \pi} \left( L(\gamma, \sigma) - L(\gamma, w_0 \sigma) \right) \int_\R \lambda e^{-t \lambda^2} e ^{- l(\gamma) \lambda}d\lambda,
\end{gathered}
\end{equation}
Moreover, the first and the second summand in the right hand side of  (\ref{bananana}) vanish by the following two remarks.
\begin{remark}
By \cite[(2.22)]{MP}
$$P_\sigma(i \nu) - P_{w_0 \sigma}(i \nu) = 0.$$
\end{remark}
\begin{remark} By Lemma \ref{feuerundwasser}
$$P^\gamma_\sigma(i \nu) - P^\gamma_{w_0 \sigma}(i \nu) = 0.$$ 
\end{remark} 

We proceed as in Section \ref{sec:symmsel}. The operator $D \cdot (D^2 + s^2)^{-1}$ is not of trace class, but we can choose coefficients $c_j$ and $s_j$ such that $D \cdot (D^2 + s^2)^{-1} + \sum_j c_j D \cdot (D^2 + s_j^2)^{-1}$
is of trace class. By the same arguments as in (\ref{expr1sd})-(\ref{preDF}) and the vanishing of $P_{w_0 \sigma}^\gamma - P_\sigma^\gamma$ and $P_{w_0 \sigma} - P_{\sigma}$ we obtain
$$ \Tr \big(   D \cdot (D^2 + s^2)^{-1} + \sum_j c_j D \cdot (D^2 + s_j^2)^{-1} \big) = \frac{1}{2s} \frac{S_a'(s, \sigma)}{S_a(s, \sigma)}  + \sum_j \frac{c_j}{2 s_j} \frac{S_a'(s_j, \sigma)}{S_a(s_j, \sigma)}$$

The theorem below follows. 
\begin{Th}
The antisymmetric Selberg zeta $S_a(s, \sigma, \chi)$ function has a meromorphic extension to $\C$. It has singularities at the points $\pm i \mu_k$ of order $\frac{1}{2} (d(\pm \mu_k, \sigma) - d(\mp \mu_k, \sigma)),$ where $\mu_k$ is a non-zero eigenvalue of $D$ of multiplicity $d(\mu_k, \sigma)$. 
\end{Th}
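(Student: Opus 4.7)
The plan is to adapt Section \ref{sec:symmsel} to the antisymmetric setting by replacing $A(\sigma)$ and its heat kernel with $D^2$ and the odd integrand $De^{-tD^2}$. The starting point is the identity displayed immediately before the theorem,
\[
\Tr\!\Bigl(D(D^2+s^2)^{-1} + \sum_{j=1}^N c_j\, D(D^2+s_j^2)^{-1}\Bigr) = \frac{1}{2s}\frac{S_a'(s,\sigma)}{S_a(s,\sigma)} + \sum_{j=1}^N \frac{c_j}{2s_j}\frac{S_a'(s_j,\sigma)}{S_a(s_j,\sigma)},
\]
valid for $\RRe(s) > c$ by Proposition \ref{tarantellada}. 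The strategy is to prove that the left-hand side extends to a meromorphic function on $\C$ whose residues at the candidate singular points are integers, and then to read off the singularity data for $S_a$ by integration.

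First I would verify that the left-hand side is a well-defined meromorphic function on $\C$. By the same argument used in Section \ref{sec:symmsel} to handle the product $\prod_j R(s_j^2)$, one writes the combination as $R(s_1^2)^{N+1}$ times a zero-order pseudodifferential operator; invoking the Weyl law (Theorem \ref{darthveider}) for $D^2$, which is a generalized Laplacian on the orbibundle $E(\sigma)\otimes F$, shows that the combination is of trace class for $N$ large enough. The resolvent $(D^2+s^2)^{-1}$ is then meromorphic in $s\in\C$ with poles precisely at those $s$ for which $-s^2\in\spec(D^2)$.

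Second, I would localize and compute the residues. Using the spectral decomposition (\ref{spectralDecomposition}) applied to $D^2$, write $L^2 = \overline{\bigoplus_k V_k}$; since $D$ commutes with $D^2$, each finite-dimensional block $V_k$ with unique generalized eigenvalue $\mu_k^2$ is $D$-invariant and splits further into generalized $D$-eigenspaces with eigenvalues $\pm\mu_k$ of multiplicities $d(\pm\mu_k,\sigma)$. Modulo nilpotent terms that contribute nothing to the residue of the trace, on the $\pm\mu_k$ generalized $D$-eigenspace one has $D(D^2+s^2)^{-1}=\pm\mu_k/(s^2+\mu_k^2)$; the partial-fraction expansion
\[
\frac{\mu}{s^2+\mu^2}=\frac{1}{2i}\Bigl(\frac{1}{s-i\mu}-\frac{1}{s+i\mu}\Bigr)
\]
then yields simple poles of the trace at $s=\pm i\mu_k$ whose residues are integer combinations of $d(\pm\mu_k,\sigma)$ and $d(\mp\mu_k,\sigma)$. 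Substituting into the displayed identity identifies $\frac{S_a'(s,\sigma)}{S_a(s,\sigma)}$ with a meromorphic function on $\C$ with integer residues at the $\pm i\mu_k$; the meromorphic extension of $S_a(s,\sigma,\chi)$ to $\C$ then follows, with zeros and poles located exactly at the points $\pm i\mu_k$ and orders $\tfrac{1}{2}(d(\pm\mu_k,\sigma)-d(\mp\mu_k,\sigma))$, as claimed.

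The main obstacle is the spectral bookkeeping for the non-self-adjoint Dirac operator $D$. Because $\chi$ is not unitary, $D$ fails to be self-adjoint, so one cannot use an orthonormal eigenbasis; one must instead work with the generalized root subspaces of $D$ inside the finite-dimensional blocks $V_k$ of $D^2$ produced by the decomposition of Section \ref{sec:!fa}. One has to verify that the nilpotent parts of $D$ on each $V_k$ contribute nothing to the pole part of the trace, and that the multiplicities $d(\pm\mu_k,\sigma)$ are well-defined integers inherited consistently from this spectral decomposition. Once these functional-analytic points are set up exactly as in the symmetric case, the residue computation and the meromorphic continuation are purely formal.
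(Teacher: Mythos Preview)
Your proposal is correct and follows the same route as the paper: the paper derives the displayed resolvent identity from (\ref{bananana}) exactly as you outline and then simply writes ``The theorem below follows,'' leaving the residue bookkeeping implicit. Your write-up actually supplies more detail than the paper does---in particular the handling of the generalized root spaces of the non-self-adjoint $D$ inside the finite-dimensional blocks of $D^2$, and the observation that the nilpotent parts do not contribute to the pole of the trace---so there is nothing to correct.
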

Using that $Z(s, \sigma) = S(s, \sigma) S_a(s, \sigma)$, we obtain

\begin{Th}
The Selberg zeta function has an meromorphic extension to $\C$. It has the following singularities:
\begin{itemize}
\item If $\sigma = w_0 \sigma$, a sigularity at the points $\pm i \sqrt{\lambda_k}$ of order $m_s(\lambda_k, \sigma)$, where $\lambda_k$ is a non-zero eigenvalue of $A(\sigma)$ and $m_s(\lambda_k, \sigma)$ is the graded dimension of the corresponding eigenspace.  
\item If $\sigma \neq w_0 \sigma$, a singularity at the points $\pm i \mu_k$ of order $\frac{1}{2} (m_s(\mu_k^2, \sigma) + d(\pm \mu_k, \sigma) - d(\mp \mu_k, \sigma)).$ Here $\mu_k$ is a non-zero eigenvalue of $D$ of multiplicity $d(\mu_k, \sigma)$ and $m_s(\mu_k^2, \sigma)$ is the graded dimension of the eigenspace $A(\sigma)$ corresponding to the eigenvalue $\mu_k^2.$
\item At the point $s=0$ a singularity of order $2m_s(0, \sigma)$ if $\sigma = w_0 \sigma$ and of order $m_s(0, \sigma)$ if $\sigma \neq w_0 \sigma.$
\end{itemize}
\end{Th}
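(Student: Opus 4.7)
The plan is to reduce the meromorphic continuation of $Z(s,\sigma,\chi)$ to the two results already obtained for the symmetric Selberg zeta function $S(s,\sigma)$ and the antisymmetric one $S_a(s,\sigma)$, treating the cases $\sigma = w_0\sigma$ and $\sigma \neq w_0\sigma$ separately.

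In the case $\sigma = w_0\sigma$ one has $S(s,\sigma) = Z(s,\sigma)$ by definition, so the meromorphic extension of $Z(s,\sigma,\chi)$ to $\C$ is immediate from the theorem of Subsection \ref{sec:symmsel}, and the claimed singularities at $\pm i\sqrt{\lambda_k}$ of order $m_s(\lambda_k,\sigma)$ and at $s=0$ of order $2m_s(0,\sigma)$ are precisely those of $S(s,\sigma)$.

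In the case $\sigma \neq w_0\sigma$, I combine the definitions
\[
S(s,\sigma) = Z(s,\sigma)\,Z(s,w_0\sigma), \qquad S_a(s,\sigma) = \frac{Z(s,\sigma)}{Z(s,w_0\sigma)},
\]
valid on the half-plane of convergence $\RRe(s) > c$, to obtain the identity $Z(s,\sigma)^2 = S(s,\sigma)\,S_a(s,\sigma)$ there. Taking logarithmic derivatives yields
\[
2\,\frac{Z'(s,\sigma)}{Z(s,\sigma)} \;=\; \frac{S'(s,\sigma)}{S(s,\sigma)} + \frac{S_a'(s,\sigma)}{S_a(s,\sigma)},
\]
an identity between holomorphic functions on $\{\RRe(s) > c\}$. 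By the theorems of Subsections \ref{sec:symmsel} and \ref{sec:aszf} the right-hand side extends meromorphically to all of $\C$, hence so does $Z'/Z$. To lift this to a meromorphic extension of $Z$ itself I need that the residues of $Z'/Z$ at every singular point are integers, i.e.\ that the sum of the orders of $S$ and $S_a$ at each singular point is even. Granting this, the computed order of $Z$ at $\pm i\mu_k$ is $\tfrac{1}{2}\bigl(m_s(\mu_k^2,\sigma) + d(\pm\mu_k,\sigma) - d(\mp\mu_k,\sigma)\bigr)$ and the order at $s=0$ is $\tfrac{1}{2}(2m_s(0,\sigma)+0) = m_s(0,\sigma)$, matching the claim.

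The main obstacle is the parity congruence $m_s(\mu_k^2,\sigma) \equiv d(+\mu_k,\sigma) - d(-\mu_k,\sigma) \pmod 2$. I would establish this by comparing the spectra of the graded Laplacian $A(\sigma)$ on $E(\sigma)\otimes F$ with that of the twisted Dirac operator $D$ of Subsection \ref{sec:aszf} via a Weitzenb\"ock-type identity relating $D^2$ and $A(\sigma)$: the $\mu_k^2$-eigenspace of $A(\sigma)$ decomposes as the direct sum of the $+\mu_k$- and $-\mu_k$-eigenspaces of $D$, and the action of $D$ on the $\Z/2$-grading of $E(\sigma)$ interchanges the two summands, which forces the required congruence. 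Once integrality of the residues is established, the meromorphic extension of $Z(s,\sigma,\chi)$ and its singularity orders follow directly from the explicit formula above.
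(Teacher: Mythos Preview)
Your approach is exactly the paper's: the paper's entire proof is the single line ``Using that $Z(s,\sigma)=S(s,\sigma)S_a(s,\sigma)$, we obtain\ldots'' (note the apparent typo---it should read $Z^2=S\cdot S_a$, as you correctly write). You are simply more careful than the paper in spelling out the parity obstruction $m_s(\mu_k^2,\sigma)\equiv d(\mu_k,\sigma)-d(-\mu_k,\sigma)\pmod 2$ needed to pass from a meromorphic $Z^2$ to a meromorphic $Z$; the paper glosses over this, implicitly relying on the Lichnerowicz-type identity $D^2=A(\sigma)$ from \cite{BO,Pf1} that you invoke.
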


\section{Appendix: Extensions of differential operators}
\label{Extens}
The goal of this section is to recall some issues that may arise when choosing a selfadjoint extension of
a Laplacian on a manifold with conical singularities. Another goal is to build a connection between such extensions and the
selfadjoint extension of a Laplacian on an orbifold. 
Let $\Gamma \bs \HH^2$ be a hyperbolic 2-dimensional orbifold with conical singularities $\{x_0, \ldots, x_n\}$. 
Suppose for a moment that a second order symmetric differential operator $D$ is defined
on~$C_0^\infty\left((\Gamma \bs \HH^2) \backslash \{x_0, \ldots, x_n\}\right)$, then $D$ is not essentially self-adjoint. We construct all its 
self-adjoint extensions $D_\alpha, \alpha \in \T^{n+1}$ and investigate the behaviour of functions
from the domain of $D_\alpha$ near the singularities  $\{x_0, \ldots, x_n\}$. We determine which extension of $D_\alpha$ 
corresponds to so-called orbifold extension $D_{orb}$, the one we  use throughout the article.
It is known that the orbifold extension of a symmetric operator is self-adjoint, see \cite{Buc}. We give a one-line proof of this statement
 under the condition that the orbifold is a global quotient orbifold $\Ob = \Gamma \bs M$ with $M$ a Riemannian manifold and $\Gamma$ a group of its isometries.

Consider an operator $\tilde{D}: C^\infty(M) \to C^\infty(M)$. For all $\gamma \in \Gamma$, let 
$L_\gamma: M \to M$, $L_\gamma(x) = \gamma(x)$. Suppose $\tilde{D}$ commutes with $L_\gamma$:
$$\tilde{D} \circ \Gamma_\gamma = \Gamma_\gamma \circ \tilde{D}, \quad \forall \gamma\in \Gamma.$$

Denote $C^\infty (\Gamma \bs M):= C^\infty(M)^\Gamma$. 
\begin{definition}
$D_{orb} := \tilde{D} |_{C^\infty(M)^\Gamma}$ is an orbifold extension of $D$.
\end{definition}
Suppose that $D \subset D_{orb}$.
\begin{remark}
It is possible to define an orbifold extension of $D$ for any orbifold, not necessarily good, see \cite{Buc}.
\end{remark}

A natural candidate for a differential operator $D$ that would be invariant under the isometries $M$ is the Laplacian. 
An orbifold extension of the Laplacian is easily proved to be essentially 
self-adjoint under some assumptions on the structure of $\Gamma$, namely, let 
$\Gamma' \subset \Gamma$ be a normal torsion free subgroup of finite index, $G = \Gamma / \Gamma'$.
 It follows that $\Gamma' \bs M$ is a complete Riemannian manifold. Hence, 
 $\D^{\Gamma' \bs M}: C^\infty(\Gamma' \bs M) \to C^\infty(\Gamma' \bs M)$ is an essentially self-adjoint operator  
 and $\overline{\D^{\Gamma' \bs M}}:H^2(M) \to L^2(M)$, hence 
 $\overline{\IIm (\D^{\Gamma' \bs M} \pm 1 )} = L^2(\Gamma' \bs M)$. Consider the orbifold extension 
 $\D^{\Gamma \bs M}$. The subspace of $G$-invariant functions are closed, and any Laplacian maps the space to itself, hence 
 $$\overline{\IIm (\D^{\Gamma \bs M} \pm 1 )} = \overline{\IIm (\D^{\Gamma' \bs M} \pm 1 \id)}^G = (L^2(\Gamma' \bs M))^G = L^2(\Gamma \bs M).$$

\begin{remark}
It is crucial that $G$ is finite. Otherwise, it would not be true that $(L^2(\Gamma' \bs M))^G = L^2(\Gamma \bs M)$.
\end{remark}

One can show, moreover, that any positive symmetric elliptic pseudodifferential operator on an orbibundle 
over a compact orbifold is essentially self-adjoint (\cite{Buc}, p. 37, Theorem 3.5). Note that the definition of a 
pseudodifferential operator on an orbifold requires that we consider an orbifold extension.

The fact that we only deal with self-adjoint orbifold extensions does not mean that no other self-adjoint extension 
of $\D$ exists. Here is an example when the space of all self-adjoint extensions of the Laplacian is 
a circle:
\begin{exmp} [Laplacians for manifolds with conical singularities]
Let $\mathbb{H}^2$ be an upper half-plane with the hyperbolic metric. Suppose that a group $G \in PSL(2,\R)$ 
has a cyclic element $\gamma$ and that $G \bs \mathbb{H} = \Ob$ is compact. Consider $\Ob$ not as an orbifold, but as a manifold with conical singularities. Then near a singular point 
$x_0$  corresponding to $\gamma$, $\Ob$ is isometric to a cone $(0,1]\times S^1$ with
the metric $ds^2 = dr^2 + \sinh^2(r) d\phi^2 / n^2$, where $r \in [0,1], \phi \in S^1$, and $n$ is the order of 
$\gamma$. Without loss of generality assume that there is exactly one singular point $x_0$. We will now describe all self-adjoint extensions of $\D$.
We follow \cite[p. 277-278]{YCdeV}, which treated the case of the  $\R^n \bs \{ 0 \}$
 with the flat metric. The only difference in our case is that sometimes one has to lift 
 to the covering  and descends to the quotient during the proof, and a rotation-invariant fundamental 
 solution $\D F = \delta(x_0)$ is not $F(x) = \log(x)$ as in the flat case, but $F(x) = \log(\coth(x/2))$. 
 In any case,  for $x$ sufficiently small we have $\log(\coth(x/2)) \sim \log(x)$, so the statement of theorem 
\cite[p. 227, Theorem 1]{YCdeV} stays true:
\begin{Th}
In the above example, all self-adjoint extensions of 
$\D$ can be parametrized by $\alpha \in \R / \pi \Z$, where the domain of $\D_\alpha$ is 
$$Dom(\Delta_\alpha) = \{ f \in Dom(\D_{max}) \, | \, \exists \lambda \in \C, f(x) = \lambda (\sin(\alpha)\log(x) + \cos(\alpha)) + o(1)\},$$
where $\D_{max}$ is the maximal extension of $\Delta$.
\end{Th}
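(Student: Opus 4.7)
The plan is to reduce the problem to a local analysis near the conical point $x_0$ and invoke the von Neumann theory of self-adjoint extensions. Since $\Ob\setminus\{x_0\}$ is a smooth Riemannian manifold and $\Ob$ is compact, the only obstruction to essential self-adjointness of $\D$ on $C_0^\infty(\Ob\setminus\{x_0\})$ comes from the behavior at $x_0$: away from $x_0$ elliptic regularity and compactness give no deficiency, so $n_\pm(\D) = n_\pm(\D|_{C_0^\infty(C)})$ for a conical neighborhood $C = (0,1]\times S^1$ equipped with $ds^2 = dr^2 + n^{-2}\sinh^2(r)\,d\phi^2$. I would fix once and for all a smooth cutoff supported in $C$ and equal to $1$ near $r=0$, and carry out the extension analysis on the model cone $C$.

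The next step is a separation of variables. Decompose $L^2(C, \sinh(r)\,dr\,d\phi/n) = \bigoplus_{k\in\Z} L^2_k(C)$ by Fourier expansion in $\phi$. Under this decomposition $\D$ becomes a direct sum of radial Sturm--Liouville operators
\begin{equation*}
\D_k = -\frac{1}{\sinh(r)}\frac{d}{dr}\Bigl(\sinh(r)\frac{d}{dr}\Bigr) + \frac{n^2 k^2}{\sinh^2(r)}
\end{equation*}
on $L^2((0,1], \sinh(r)\,dr)$. For $k\neq 0$ the centrifugal potential $n^2 k^2/\sinh^2(r)\sim n^2 k^2/r^2$ at $r=0$ places the operator in Weyl's limit-point case at the singular endpoint, so each $\D_k$ with $k\neq 0$ is essentially self-adjoint and contributes no deficiency. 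Only the radial sector $k=0$ matters.

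For $k=0$ I would find the two linearly independent solutions of $\D_0 u = 0$ near $r=0$. One verifies by direct integration that these are the constant function $1$ and the rotation-invariant fundamental solution $F(r) = \log\coth(r/2)$, which satisfies $\D F = \delta(x_0)$ on the cone. Both are square-integrable against $\sinh(r)\,dr$ near $r=0$, so $\D_0$ is in the limit-circle case at $r=0$; hence $\D_0$ has deficiency indices $(1,1)$, and so does $\D$. Since $\log\coth(r/2) = -\log r + \log 2 + O(r^2)$ as $r\to 0$, the leading asymptotics of functions in $\mathrm{Dom}(\D_{max})$ near $x_0$ have the form
\begin{equation*}
f(r) = a \log r + b + o(1), \qquad a, b \in \C,
\end{equation*}
so the boundary values $(a,b)$ furnish a symplectic pairing, and von Neumann's theorem parametrizes self-adjoint extensions by the unitary group $U(1)\cong \R/\pi\Z$. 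Each extension is determined by a linear relation $a\cos\alpha = b\sin\alpha$, which after normalization of $\lambda$ is exactly the condition in the statement.

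The main technical obstacle is the limit-circle analysis for $\D_0$ and the verification that its deficiency space is indeed one-dimensional despite the hyperbolic, rather than flat, metric. The reduction is however essentially routine: the replacement $\log r \rightsquigarrow \log\coth(r/2)$ changes only lower-order terms in $r$, so the entire Weyl-type argument of \cite[p.~227, Theorem~1]{YCdeV} transfers verbatim once one checks that the weight $\sinh(r)\,dr$ and the drift term $-\coth(r)\partial_r$ do not spoil the $L^2$-behavior of the two candidate solutions near $r=0$. All remaining bookkeeping---identifying the parameter $\alpha$ with the ratio of boundary coefficients and extending the local description to a global description of $\mathrm{Dom}(\D_\alpha)$ via the cutoff function---is standard.
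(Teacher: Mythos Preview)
Your proposal is correct and follows essentially the same route as the paper. The paper does not give a self-contained proof but simply refers to Colin de Verdi\`ere's argument for $\R^n\setminus\{0\}$ with the flat metric, noting that the only modifications needed are (i) occasionally passing to the covering and back to the quotient, and (ii) replacing the flat fundamental solution $\log x$ by the hyperbolic one $\log\coth(x/2)$, which has the same leading asymptotics at $x=0$. Your separation of variables, limit-point/limit-circle dichotomy for the radial operators $\D_k$, and identification of the $k=0$ deficiency space via the pair $\{1,\log\coth(r/2)\}$ is exactly a spelled-out version of that argument; you have made explicit what the paper leaves to the cited reference.
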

\begin{remark}
The orbifold extension corresponds to $\alpha = 0$.
\end{remark}
\end{exmp}

If the operator $D$ we are dealing with does not admit a self-adjoint extension, sometimes it is still possible to classify all 
closed extensions of $\D$ between its minimal $\D_{min}$ and maximal $\D_{max}$ extensions, or at least to find the dimension of the 
space of closed extensions. For example, for first order elliptic operators  of a special kind, the classification was provided 
in~\cite[p. 672, Theorem 3.2]{BruSee}. More generally, the extensions of Fuchs type operators were studied in~\cite[Chapter 1]{Lesch}.

\bibliography{foo}{}
\bibliographystyle{alpha}

\end{document}